\colorlet{darkblue}{blue!50!black}
\renewcommand{\Re}{\mathop{\rm Re}\nolimits}
\newcommand{\p}{\partial}
\newcommand{\e}{\varepsilon}
\newcommand{\ri}{{\rightarrow}}
\newcommand{\R}{{\mathbb R}}
\newcommand{\Z}{{\mathbb Z}}
\newcommand{\IP}{{\mathbb P}}
\newcommand{\pP}{{\mathbb P}}
\newcommand{\E}{{\mathbb E}}
\newcommand{\N}{{\mathbb N}}
\newcommand{\la}{\lambda}
\newcommand{\ty}{\infty}
\newcommand{\om}{\omega}
\newcommand{\bg}{{\boldsymbol{\varUpsilon}}}
\newcommand{\de}{\delta}
\newcommand{\BBB}{{\boldsymbol{B}}}
 \newcommand{\HHH}{{\boldsymbol{H}}}
\newcommand{\MH}{{\boldsymbol{\mathfrak{{H}}}}}
\newcommand{\MU}{{\boldsymbol{\mathfrak{{A}}}}}
\newcommand{\uu}{{\boldsymbol{ {u}}}}
 \newcommand{\Ss}{{\boldsymbol {S}}}
\newcommand{\eeta}{{\boldsymbol{{\eta}}}}
\newcommand{\zzeta}{{\boldsymbol{{\zeta}}}}
\newcommand{\III}{{\boldsymbol{I}}}
\newcommand{\XXX}{{\boldsymbol{X}}}
\newcommand{\ssigma}{{\boldsymbol\sigma}}
\newcommand{\ppsi}{{\boldsymbol\psi}}
\newcommand{\UUU}{{\boldsymbol{U}}}
\newcommand{\aA}{{\cal A}}
\newcommand{\BB}{{\cal B}}
\newcommand{\CC}{{\cal C}}
\newcommand{\DD}{{\cal D}}
\newcommand{\FF}{{\cal F}}
\newcommand{\HH}{{\cal H}}
\newcommand{\KK}{{\cal K}}
\newcommand{\LL}{{\cal L}}
\newcommand{\MM}{{\cal M}}
\newcommand{\PP}{{\cal P}}
\newcommand{\VV}{{\cal V}}
\newcommand{\WW}{{\cal W}}
\newcommand{\lag}{\langle}
\newcommand{\rag}{\rangle}
\newcommand{\dd}{{\textup d}}
\newcommand{\PPPP}{{\mathfrak P}}
\newcommand{\BBBB}{{\mathfrak B}}
\newcommand{\bPPPP}{\boldsymbol{\mathfrak P}}
\newcommand{\VVV}{\boldsymbol{V}}
\newcommand{\BBBBB}{{\mathcal B}}
\newcommand{\SSS}{{\mathscr S}}
\newcommand{\fff}{{\boldsymbol{\mathit f}}}
\newcommand{\eell}{{\boldsymbol{\ell}}}
\newcommand{\uuu}{{\boldsymbol{\mathit u}}}
\newcommand{\nnn}{{\boldsymbol{\mathit n}}}
\newcommand{\vvv}{{\boldsymbol{\mathit v}}}
\newcommand{\www}{{\boldsymbol{\mathit w}}}
\newcommand{\bgg}{{\boldsymbol{g}}}
\newcommand{\lspan}{\mathop{\rm span}\nolimits}
\newcommand{\supp}{\mathop{\rm supp}\nolimits}
\newcommand{\diver}{\mathop{\rm div}\nolimits}
\newcommand{\Lip}{\mathop{\rm Lip}\nolimits}
\newcommand{\Osc}{\mathop{\rm Osc}\nolimits}
\theoremstyle{plain}
\newtheorem*{mta}{Theorem A}
\newtheorem*{mtb}{Theorem B}
\newtheorem{theorem}{Theorem}[section]
\newtheorem{lemma}[theorem]{Lemma}
\newtheorem{proposition}[theorem]{Proposition}
\theoremstyle{definition}
\newtheorem{definition}[theorem]{Definition}
\theoremstyle{remark}
\numberwithin{equation}{section}
\begin{document}
\author{V.~Jak$\check{\rm s}$i\'c\footnote{Department of Mathematics and Statistics,
McGill University, 805 Sherbrooke Street West, Montreal, QC, H3A 2K6
Canada; e-mail: jaksic@math.mcgill.ca}
\and V. Nersesyan\footnote{Laboratoire de Mat\'ematiques, UMR CNRS 8100, Universit\'e de Versailles-Saint-Quentinen-Yvelines, F-78035 Versailles, France;  e-mail: Vahagn.Nersesyan@math.uvsq.fr}
\and C.-A.~Pillet\footnote{Aix Marseille Universit\'e, CNRS, CPT, UMR 7332, Case 907, 13288 Marseille, France; 
Univerist\'e de Toulon, CNRS, CPT, UMR 7332, 83957 La Garde,  France; e-mail: pillet@univ-tln.fr}
\and A.~Shirikyan\footnote{Department of Mathematics, University of Cergy--Pontoise, CNRS UMR 8088, 2 avenue Adolphe Chauvin, 95302 Cergy--Pontoise, France; e-mail: Armen.Shirikyan@u-cergy.fr}}

\title{Large deviations from a stationary measure for a class of dissipative PDE's with random kicks}
\date{}
\maketitle

\begin{abstract}
We study a class of dissipative PDE's perturbed by a bounded random kick force. It is assumed that the random force is non-degenerate, so that the Markov process obtained by the restriction of solutions to integer times has a unique stationary measure. The main result of the paper is  a large deviation principle  for occupation measures of the Markov process in question. The proof is based on Kifer's large deviation criterion, a Lyapunov--Schmidt type reduction, and an abstract result on large-time asymptotic for generalised Markov semigroups. 

\smallskip
\noindent
{\bf AMS subject classifications:} 35Q30, 76D05, 60B12, 60F10

\smallskip
\noindent
{\bf Keywords:} Dissipative PDE's, Navier--Stokes system, Ginzburg--Landau equation, large deviations, occupation measures
\end{abstract}

\tableofcontents

\setcounter{section}{-1}

\section{Introduction}
\label{s0}
This paper is devoted to the large deviations principle (LDP) for a class of dissipative PDE's perturbed by a smooth random force. The large-time asymptotics of solutions for the problem in question is well understood, and we refer the reader to the articles~\cite{FM-1995,KS-cmp2000,EMS-2001,BKL-2002} for the first results in this direction and to the book~\cite{KS-book} for further references and a detailed description of the behaviour of solutions as time goes to infinity. In particular, it is known that if the noise is sufficiently non-degenerate, then the Markov process associated with the problem has a unique stationary distribution, which attracts exponentially the law of all solutions. Moreover, the law of iterated logarithm and the central limit theorem hold for H\"older-continuous functionals calculated on trajectories and give some information about fluctuations of their time averages around the mean value. Our aim now is to investigate the probabilities of deviations of order one from the mean value.

Let us describe in more detail the main result of this paper on the example of the Navier--Stokes system. More precisely, we consider the following problem in a bounded domain\footnote{All the results of this paper remain true for periodic boundary conditions, in which case we assume in addition that the mean values of the velocity field and of the external force are zero.} $D\subset\R^2$ with a $C^2$-smooth boundary~$\p D$:
 \begin{align}
\dot u-\nu \Delta u+\lag u, \nabla \rag u+\nabla p &=\eta(t,x), 
\quad  \diver u =0, \quad u\bigl|_{\p D}=0, \label{E:1}\\ 
u(0,x)&=u_0(x),\label{E:3}
\end{align} 
where $\nu>0$ is the kinematic viscosity, $u = (u_1(t,x), u_2(t,x))$ is the velocity field of the fluid, $p = p(t, x)$ is the pressure, and~$\eta$ is a random external force. We assume that~$\eta(t,x)$ is a random kick force  of the form
\begin{align}
\eta(t,x)&=\sum_{k=1}^{+\ty} \de(t-k)\eta_k(x), \label{E:4}
 \end{align}
 where $\de$ is the Dirac measure concentrated at zero, and $\eta_k$ are independent identically distributed (i.i.d.) random variables defined on a probability space $(\Omega,\FF,\pP)$ with range in $L^2(D,\R^2)$ that satisfy 
 \begin{equation} \label{0.4}
\pP\{\|\eta_k\|_{L^2}\le b\}=1
\end{equation}
for some $b<+\ty$.
Problem~\eqref{E:1}, \eqref{E:3} is well posed in the space
\begin{equation} \label{ps}
H= \{u\in  L^2(D,\R^2): \diver u=0\mbox{ in $D$}, \langle u,\nnn\rangle=0\mbox{ on $\p D$}\},
\end{equation}
where $\nnn$ stands for the outside unit normal to~$\p D$. The restrictions of solutions for~\eqref{E:1}, \eqref{E:3} to integer times form a Markov chain in~$H$. As is well known (see Chapter~3 of the book~\cite{KS-book} and the references therein), this process is ergodic under rather general hypotheses on~$\eta_k$.  More precisely, suppose that there exists  an increasing sequence of finite-dimensional subspaces $H_N\subset H$ such that the law of the projection of~$\eta_k$ to~$H_N$ is absolutely continuous with respect to the Lebesgue measure, and its support contains the origin. 
Let~${\mathcal P}(H)$ be the set of all Borel probability measures on $H$ endowed with topology of weak convergence.
Then the Markov chain in question possesses a unique stationary measure~$\mu\in {\cal P}(H)$, which is exponentially mixing in the sense that the law of any solution of~\eqref{E:1} with a deterministic initial condition  converges to~$\mu$ exponentially fast in the Kantorovich--Wasserstein metric. We wish to investigate the probabilities of large deviations of the occupation measures from~$\mu$. More precisely, let 
\begin{equation} \label{0.5}
\zeta_k^\omega=\frac1k\sum_{j=0}^{k-1}\delta_{v_j},\quad k\ge1,
\end{equation}
be a sequence of random probability measures in ${\cal P}(H)$,  where~$\{v_j\}$ denotes a  stationary trajectory of the Markov chain. The following theorem is a simplified version of the main result of this paper (see Theorem~\ref{main}). 

\begin{mta}
Under the above hypotheses, the sequence $\{\zeta_k\}$ satisfies a LDP. More precisely, there is a lower semicontinuous mapping $I:\PP(H)\to[0,+\infty]$ which is equal to~$+\infty$ outside a compact subset such that 
\begin{equation} \label{0.6}
-\inf_{\lambda\in \dot\Gamma}I(\lambda)
\le \liminf_{k\to\infty}\frac1k\log\IP\{\zeta_k\in\Gamma\}
\le \limsup_{k\to\infty}\frac1k\log\IP\{\zeta_k\in\Gamma\}
\le -\inf_{\lambda\in \overline\Gamma}I(\lambda),
\end{equation}
where $\Gamma\subset\PP(H)$ is an arbitrary Borel subset, and $\dot\Gamma$ and $\overline\Gamma$ denote its interior and closure, respectively.
\end{mta}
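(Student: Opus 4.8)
The plan is to deduce the large deviation principle for the occupation measures $\{\zeta_k\}$ from Kifer's abstract criterion, which reduces the task to two ingredients: (i) existence of the limit
\begin{equation} \label{pp1}
Q(V)=\lim_{k\to\infty}\frac1k\log\E\exp\Bigl(\sum_{j=0}^{k-1}V(v_j)\Bigr)
\end{equation}
for every bounded continuous function $V:H\to\R$ (the pressure functional), and (ii) a uniqueness/equicontinuity property of the associated equilibrium states that upgrades the resulting upper bound to the full LDP with a convex rate function. Accordingly, the first step is to introduce, for each bounded continuous $V$, the Feynman--Kac type operator
\begin{equation} \label{pp2}
(\mathfrak{P}_V f)(u)=\E\Bigl(\mathrm e^{V(u)}f(u_1^u)\Bigr),
\end{equation}
acting on a suitable weighted space of functions on $H$, where $u_1^u$ is the time-$1$ value of the solution issued from $u$; this is the generalised Markov semigroup alluded to in the abstract. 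The existence of the limit $Q(V)$ is then equivalent to a large-time multiplicative ergodic statement for the iterates $\mathfrak{P}_V^k$: one needs a Perron--Frobenius type theorem providing a leading eigenvalue $\lambda_V>0$, a positive eigenvector $h_V$, and a left eigen-measure, together with a spectral gap ensuring $\mathfrak{P}_V^k\sim\lambda_V^k\,h_V\otimes(\cdot)$ uniformly on bounded sets. Given this, $Q(V)=\log\lambda_V$ and $Q$ is convex and Gateaux differentiable, which is exactly what Kifer's criterion consumes.

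The heart of the matter is therefore establishing that multiplicative ergodic theorem for the perturbed operators $\mathfrak{P}_V$, and this is where the Lyapunov--Schmidt reduction enters. The unperturbed Markov operator (case $V\equiv0$) is exponentially mixing by the hypotheses recalled in the introduction, hence has a spectral gap in the relevant weighted space; the operators $\mathfrak{P}_V$ are bounded perturbations of it, but not small perturbations, so one cannot invoke analytic perturbation theory directly. Instead I would use the coupling/controllability structure coming from the non-degeneracy of the kicks: project onto the finite-dimensional space $H_N$ where the noise has a density, run a Doeblin-type argument on that projection to obtain a ``minorisation'' for $\mathfrak{P}_V$, and combine it with the dissipativity of the PDE (which yields a Lyapunov function and squeezing of the infinite tail $H_N^\perp$) to get a Harris-type bound for $\mathfrak{P}_V$ on a Banach space of functions with controlled growth. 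This produces the eigendata $(\lambda_V,h_V,\cdot)$ and the uniform convergence $\lambda_V^{-k}\mathfrak{P}_V^k f\to h_V\,\langle \cdot,f\rangle$. The finite-dimensional reduction is precisely the Lyapunov--Schmidt mechanism: the essential hyperbolicity lives on $H_N$, the complement is slaved to it.

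With $Q$ in hand one obtains from Kifer's theorem the upper bound in \eqref{0.6} with rate function $I=Q^*$, the Legendre transform of $Q$ over $\PP(H)$, which is automatically lower semicontinuous and convex; its superexponential tightness and the fact that it is $+\infty$ off a compact set follow from the Lyapunov bound $\E\exp(\varepsilon\|v_j\|^2)\le C$ uniformly in $j$, which confines the occupation measures to a compact set of $\PP(H)$ with superexponentially small exception. The lower bound is the subtler half: by Kifer's criterion it suffices to check that for each $V$ there is a \emph{unique} measure $\lambda$ attaining $\sup_\lambda(\langle V,\lambda\rangle-I(\lambda))$, equivalently that $Q$ is differentiable at every $V$; this follows from the uniqueness of the stationary measure of the ``tilted'' process whose transition kernel is $h_V^{-1}(u)\,\mathfrak{P}_V(u,\dd y)h_V(y)/\lambda_V$, which is again a non-degenerate kicked PDE and hence uniquely ergodic by the same Harris argument. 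The main obstacle, and the step demanding the most care, is the construction of the spectral gap for $\mathfrak{P}_V$ uniformly over a neighbourhood of each $V$ in a weighted supremum norm strong enough to make $\langle\cdot,V\rangle$ continuous yet weak enough to accommodate the unbounded Lyapunov weight — reconciling the weak topology on $\PP(H)$ required by the LDP with the strong norm needed to run Harris' theorem is the crux of the proof.
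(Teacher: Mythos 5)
Your high-level scaffolding is correct and matches the paper's: reduce to Kifer's criterion, with (i) existence of the pressure $Q(V)$ from a multiplicative ergodic theorem for the Feynman--Kac semigroup $\mathfrak{P}_k^V$ and (ii) uniqueness of the equilibrium state via unique ergodicity of the tilted chain $h_V^{-1}\mathfrak{P}_1^V(\cdot\, h_V)/\lambda_V$. However, the mechanism you propose for step (i), and the structural fact you leave unstated, diverge from what the paper actually does in ways that matter.

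The issue you flag as ``the crux of the proof'' --- reconciling the weak topology on $\PP(H)$ with the strong weighted norm needed for a Harris/Doeblin spectral gap --- is a tension the paper simply does not face, because it is dissolved by an observation you have missed. Under the bounded-kick hypotheses~(A)--(D) the domain of attainability from zero, $\aA$, is a \emph{compact} invariant subset of $H$, and any trajectory starting in $\aA$ stays there. The whole analysis is carried out on $C(\aA)$ and $\PP(\aA)$: the occupation measures live in the (weakly compact) set $\PP(\aA)$, the rate function is automatically good and equal to $+\infty$ off the compact $\PP(\aA)$, and there is no need for a Lyapunov weight, no superexponential moment bound $\E\exp(\varepsilon\|v_j\|^2)$, and no weighted Banach space. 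Missing this is not a stylistic difference --- without it, your proposal has no concrete route to the required exponential tightness or to the compactness of the level sets, and the Harris argument you sketch would have to be carried out over a family of non-Markov Feynman--Kac operators on an unbounded state space, which is a genuinely hard open problem that the paper sidesteps.

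The second discrepancy is in how the multiplicative ergodic theorem is proved. You propose a minorisation $+$ Lyapunov (Harris) argument giving a spectral gap. The paper does \emph{not} establish a spectral gap. Instead it proves a softer abstract result (Theorem~\ref{t3.1}) requiring only two qualitative properties of the generalised Markov kernel: a \emph{uniform Feller property} (the normalised iterates $\|\PPPP_k^V f\|_\infty^{-1}\PPPP_k^V f$ form an equicontinuous family on the compact $\aA$) plus a \emph{uniform irreducibility} (a lower bound on hitting balls), and then extracts the eigentriple by an Arzel\`a--Ascoli compactness argument and a total-variation contraction in $L^1(\mu_V)$. The Lyapunov--Schmidt reduction is used not to build a Doeblin minorisation but to write $\PPPP_k^V f(u)$ (for $V$, $f$ depending on finitely many modes) as an explicit finite-dimensional integral against a density $D_k$, and then to differentiate directly in the initial condition; the squeezing property~(C) controls the slaved tail and yields the exponential decay $\|\partial_{\varUpsilon_j}\bPPPP_k^V\fff\|\lesssim e^{\varkappa j}\|\bPPPP_k^V\mathbf 1\|_\infty$, which is what equicontinuity requires. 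Your intuition that the ``hyperbolicity lives on $H_N$ with the complement slaved to it'' is right, but the estimate extracted from it is a uniform Lipschitz bound for the normalised iterates, not a spectral gap, and this distinction is where the proof actually lives.

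Finally, the differentiability of $Q$ (uniqueness of equilibrium state) is established in the paper by showing that any equilibrium state is an invariant measure for the tilted Markov semigroup, and that this semigroup has a unique invariant measure $h_V\mu_V$ as a direct consequence of the convergence $\lambda_V^{-k}\PPPP_k^{V*}\nu\rightharpoonup\langle h_V,\nu\rangle\mu_V$ already delivered by Theorem~\ref{t3.1}. You gesture at this correctly, but again you appeal to a ``Harris argument'' for unique ergodicity of the tilted chain when in fact no such argument is needed or used: uniqueness falls out of the same soft convergence statement on the compact set.
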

For instance, if $f:H\to\R^m$ is a continuous mapping and $B\subset\R^m$ is a Borel subset, then taking $\Gamma=\{\sigma\in\PP(H):\int_Hf\dd\sigma\in B\}$ in inequality~\eqref{0.6},  we get  (see Section~\ref{s2.1} for a precise statement) 
$$
\exp(-c_-\,k)\lesssim\IP\biggl\{\frac1k\sum_{j=0}^{k-1}f(v_j)\in B\biggr\}\lesssim \exp(-c_+\,k)\quad\mbox{as $k\to\infty$}, 
$$
where $c_\pm=c_\pm(f,B)\ge0$ are some constants (not depending on~$k$) that can be expressed in terms of the rate function~$I$.

Let us mention that the LDP is well understood for finite-dimensional diffusions and for Markov processes with compact phase space, provided that the randomness is sufficiently non-degenerate and ensures mixing in the total variation norm. This type of results were first obtained by Donsker and Varadhan~\cite{DV-1975} and later extended by many others. A detailed account of the main achievements can be found in the books~\cite{FW1984,DS1989,ADOZ00}. 

In the context of randomly forced PDE's, the problem of large deviations was studied in a number of papers. Most of them, however, are devoted to studying PDE's with vanishing random perturbation and provide estimates for the probabilities of deviations from solutions of the limiting deterministic equations. We refer the reader to the papers~\cite
{freidlin-1988,sowers-1992a,sowers-1992b,chang-1996,CR-2004,CR-2005,SS-2006,CM-2010} and the references therein for various results of this type, including the asymptotics of stationary distributions when the amplitude of the perturbation goes to zero. To the best of our knowledge, the only papers devoted to large deviations from a stationary distribution in the case of stochastic PDE's are those by Gourcy~\cite{gourcy-2007a,gourcy-2007b}. Using a general result due to Wu~\cite{wu-2000}, he established the LDP for occupation measures of stochastic Burgers and Navier--Stokes equations, provided that the random force is white in time and sufficiently irregular in the space variables. The present paper gives a first result on large deviations from a stationary distribution  for PDE's with a {\it smooth\/} random perturbation. 

Let us note that, in Gourcy's papers, the set of measures is endowed with the {\it $\tau$-topology\/} which is generated by the duality with respect to bounded Borel functions (and is much stronger than the weak topology used in our paper). This enables one to apply the LDP to physically relevant observables that are not continuous on the energy space. Under our assumptions, the LDP is not likely to hold for the $\tau$-topology. However, the results established in this paper can be applied to derive the LDP for functionals that are continuous on higher Sobolev spaces. Furthermore, using the Dawson--G\"artner theorem~\cite{DG-1987}, we establish the following result on large deviations in the space of trajectories (also called process level LDP). Let us denote by~$\HHH$ the space of sequences $\uuu=(u_j,j\ge0)$ with $u_j\in H$ and endow it with the Tikhonov topology. Given a stationary trajectory $\vvv=\{v_j\}$  for the Markov chain associated with~\eqref{E:1}, we define the sequence of occupation measures 
\begin{equation}
\zzeta_k^\omega=\frac{1}{k}\sum_{j=0}^{k-1}\delta_{\vvv_j},
\end{equation}
where we set $\vvv_j=\{v_i,i\ge j\}$. 

\begin{mtb}
Let us assume that the above-mentioned hypotheses are satisfied. Then the LDP holds for~$\zzeta_k$ with a rate function $\III:\PP(\HHH)\to[0,+\infty]$.
\end{mtb}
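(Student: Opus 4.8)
The plan is to deduce Theorem~B from the occupation-measure LDP of Theorem~\ref{main} by means of the Dawson--G\"artner projective-limit theorem. For $n\ge1$ let $\pi_n:\HHH\to H^n$ be the coordinate projection $\uuu=(u_j,j\ge0)\mapsto(u_0,\dots,u_{n-1})$, and let $p_n=(\pi_n)_*:\PP(\HHH)\to\PP(H^n)$ be the induced marginalisation map, which is continuous for the weak topologies. Since $\HHH$ is a countable product of Polish spaces, a Borel probability measure on $\HHH$ is determined by, and (by Kolmogorov's theorem) can be reconstructed from, its consistent family of finite-dimensional marginals, and the weak topology on $\PP(\HHH)$ coincides with the projective-limit topology along $\{p_n\}$; hence $\PP(\HHH)=\varprojlim_n\PP(H^n)$ as topological spaces. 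By the Dawson--G\"artner theorem~\cite{DG-1987} it is therefore enough to prove that, for each fixed $n$, the sequence $p_n\zzeta_k\in\PP(H^n)$ satisfies an LDP with a good rate function $\III_n$; the rate function for $\zzeta_k$ is then $\III(\mathbf Q)=\sup_{n\ge1}\III_n(p_n\mathbf Q)$.

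Next I would identify $p_n\zzeta_k$ concretely. Since $\pi_n(\vvv_j)=(v_j,\dots,v_{j+n-1})=:w_j$, one has $p_n\zzeta_k=\frac1k\sum_{j=0}^{k-1}\delta_{w_j}$, which is exactly the empirical measure of the first $k$ terms $w_0,\dots,w_{k-1}$ of the $H^n$-valued $n$-block process $\{w_j,j\ge0\}$. Because the passage from $v_j$ to $v_{j+1}$ is a fixed measurable function of $v_j$ and the kick $\eta_{j+1}$, the block process obeys $w_{j+1}=F(w_j,\eta_{j+n})$ for a fixed continuous $F$ (drop the first coordinate, shift, and append the image of the last coordinate under one kicked step). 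Thus $\{w_j\}$ is again a Markov chain driven by the i.i.d.\ kicks, and it is stationary whenever $\{v_j\}$ is. The sought LDP for $p_n\zzeta_k$ is therefore the assertion of Theorem~\ref{main} with $\{w_j\}$ in place of $\{v_j\}$, \emph{provided} $\{w_j\}$ meets the hypotheses of that theorem.

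Verifying those hypotheses for the block chain is the step where I expect the real work to lie. Uniqueness of the stationary measure and exponential mixing in the Kantorovich--Wasserstein metric pass from $\{v_j\}$ to $\{w_j\}$: coupling two trajectories of $\{v_j\}$ so that they coincide after an exponentially-integrable random time forces the corresponding blocks to coincide as well, so mixing of $\{w_j\}$ follows from mixing of $\{v_j\}$. The delicate ingredient is the non-degeneracy used in the abstract generalised-Markov-semigroup part of the proof (existence and uniqueness of the principal eigenvector of $\mathfrak{P}_g\varphi=\mathrm e^{g}\E_{\cdot}[\varphi(w_1)]$, and the large-time asymptotics $\frac1k\log\mathfrak{P}_g^k\mathbf 1\to\log\lambda(g)$). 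Here one uses that after $n$ steps the block chain no longer depends on its old coordinates: $w_{j+n}$ is a function of $w_j$ only through its last coordinate $v_{j+n-1}$, and it records $n$ consecutive steps of the \emph{original} chain. Consequently $\mathfrak{P}^n$, and with it $\mathfrak{P}_g^n$, enjoys exactly the absolute-continuity and approximate-controllability properties --- relative to the subspaces $H_N^n\subset H^n$ --- that the original chain has on $H$, and the hypotheses of the abstract semigroup result, which bear on the iterates of $\mathfrak{P}_g$, are implied by their analogues for $\mathfrak{P}_g^n$. Running Kifer's criterion and the Lyapunov--Schmidt reduction in this setting then yields the good rate function $\III_n$.

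Finally, Dawson--G\"artner delivers the LDP for $\zzeta_k$ on $\PP(\HHH)$ with the good rate function $\III(\mathbf Q)=\sup_n\III_n(p_n\mathbf Q)$. This $\III$ is lower semicontinuous, being a supremum of the lower semicontinuous functions $\III_n\circ p_n$, and its sublevel set $\{\III\le L\}=\bigcap_n p_n^{-1}\{\III_n\le L\}$ is compact: it is closed, and it is tight because each of its members has all coordinate marginals confined to fixed compact subsets of $H$ (read off from the compact sets $\{\III_n\le L\}\subset\PP(H^n)$), so a single compact subset of $\HHH$ (a product of compact subsets of $H$) carries mass arbitrarily close to one, uniformly over the family. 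To sum up, the one genuinely nontrivial point is the verification that the degenerate-looking $n$-block chains fall under the hypotheses of Theorem~\ref{main}.
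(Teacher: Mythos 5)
Your overall strategy coincides with the paper's: apply Dawson--G\"artner, identify the projected measures $p_n\zzeta_k$ with the occupation measures of the $n$-block chain $w_j=(v_j,\dots,v_{j+n-1})$, and prove a good-rate LDP for these on $H^n$. You also correctly spot the one genuine issue: the driving noise of the block chain, $\eeta_k=(0,\dots,0,\eta_{k+n-1})$, is degenerate in $H^n$ and therefore cannot satisfy Condition (D) as stated. This is where your proposal becomes imprecise and the paper's proof takes a more careful route.

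You phrase the block LDP as "the assertion of Theorem~\ref{main} with $\{w_j\}$ in place of $\{v_j\}$, provided $\{w_j\}$ meets the hypotheses of that theorem," and then argue that non-degeneracy is recovered after $n$ steps so that "the hypotheses of the abstract semigroup result, which bear on the iterates of $\mathfrak{P}_g$, are implied by their analogues for $\mathfrak{P}_g^n$." This does not quite close the gap. The abstract result of Section~\ref{s3} requires uniform irreducibility (which indeed follows from the $n$-step structure exactly as you suggest) \emph{and} the uniform Feller property, i.e.\ equicontinuity of the entire sequence $\{\|\PPPP_k^V f\|_\infty^{-1}\PPPP_k^V f,\,k\ge0\}$, not just of the subsequence indexed by multiples of $n$. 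One cannot in general deduce equicontinuity at all times $k$ from equicontinuity along an arithmetic progression without extra control of the intermediate iterates, so "implied by their analogues for $\mathfrak{P}_g^n$" needs justification that your sketch does not supply. In the paper the block chain is never forced into the statement of Theorem~\ref{main}; instead Theorem~\ref{mainm} is proved by rerunning the \emph{proof} of Theorem~\ref{main}: the uniform Feller property for the block semigroup is obtained directly from the Lyapunov--Schmidt integral representation, namely formula~\eqref{4.24}, which is the exact analogue of~\eqref{4.12*} and yields the gradient bound~\eqref{E:3.9} for all $k\ge n$, after which the Arzel\`a argument of assertion~(P) applies verbatim. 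This is the step your proposal gestures at ("Running Kifer's criterion and the Lyapunov--Schmidt reduction in this setting then yields the good rate function") but does not actually carry out, and it is the step that makes the degeneracy of $\eeta_k$ harmless. Once this is in place, your reconstruction of the rate function $\III(\ssigma)=\sup_n \III_n(\ssigma\circ p_n^{-1})$ and your verification of its goodness are correct and match~\eqref{5.4} and Step~1 of the proof of Theorem~\ref{t5.2}.
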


In conclusion, let us mention that the LDP discussed above remains valid  in the case of unbounded perturbations; this question will be addressed in a subsequent publication. We also 
remark  that this paper is a first step of a research program whose aim is to develop a large deviation theory for dissipative PDE's with random perturbation and to justify the Gallavotti--Cohen fluctuation principle for some relevant functionals; cf.~\cite{GC-1995}. 

\medskip
The paper is organised as follows. In Section~\ref{s1} we introduce the model, state our results, describe applications, and outline the schemes 
of the proofs. Section~\ref{s3} deals with the large-time asymptotics of generalised Markov semigroups. 
A central technical part of the proof is the verification of  uniform Feller property for a suitable family 
of  semigroups.
This verification is based  on the Lyapunov--Schmidt reduction and is carried out in Section~\ref{sec-unif}.
The proofs of the main results are given in Sections~\ref{s4} and \ref{s6}.
 Finally, three auxiliary results used in the main text are recalled in the Appendix. 

\subsection*{Acknowledgments} 
The research of VJ was supported by NSERC.
The research of VN and AS was supported by the ANR grants EMAQS and STOSYMAP (No.~ANR 2011 BS01 015 01). The research of CAP was partly
supported by ANR grant 09-BLAN-0098.

\subsection*{Notation}
Let~$\Z$ be the set of integers, let~$\Z_l$ be the set of integers less than or equal to~$l$, and let~$X$ be a Polish space with a metric $d_X(u,v)$. We denote by~$X^k$ the direct product of~$k$ copies of~$X$, by~$\XXX=X^{\Z_0}$ the space of sequences $(u_k, k\in\Z_0)$ with $u_k\in X$, and by~$B_X(a,d)$ the closed ball of radius $d > 0$ centered at $a\in X$. If $a=0$, we write $B_X(d)$. The distribution of a random variable~$\xi$ is denoted by~$\DD(\xi)$ and the indicator function of a set~$C$ by~$I_C$. 

\smallskip
\noindent
$L^p(D)$ and~$H^s(D)$ denote the usual Lebesgue and Sobolev spaces in a domain $D\subset \R^n$. We use the same notation for spaces of scalar and vector valued functions. The corresponding norms are denoted by~$\|\cdot\|_{L^p}$ and~$\|\cdot\|_s$, respectively.

\smallskip
\noindent
$C_b(X)$ is the space of bounded continuous functions $f:X\to\R$ endowed with the natural norm $\|f\|_\infty=\sup_X|f|$ and $C_+(X)$ is the set of strictly positive functions $f\in C_b(X)$. 

\smallskip
\noindent
$L_b(X)$ stands for the space of functions $f\in C_b(X)$ such that
$$
\|f\|_L:=\|f\|_\infty+\sup_{0<d_X(u,v)\le 1}\frac{|f(u)-f(v)|}{d_X(u,v)}<\infty.
$$
In the case of a compact metric space, we shall drop the subscript~$b$ and write~$C(X)$ and~$L(X)$.  

\smallskip
\noindent
$\BBBBB(X)$ denotes the Borel $\sigma$-algebra on $X$,  $\MM(X)$  the vector  space of signed Borel measures on~$X$ with finite total mass, $\MM_+(X)$  the cone of non-negative measures $\mu\in\MM(X)$, and~$\PP(X)$ the set of  Borel probability measures on $X$.  The vector space~$\MM(X)$ is endowed with the {\it total variation norm\/}
$$
\|\mu\|_{\mathrm{var}}:=\sup_{\Gamma\in\BB(X)}|\mu(\Gamma)|
=\frac12\sup_{\begin{array}{c}
\mbox{\scriptsize $f\in C_b(X)$}\\[-4pt]
\mbox{\scriptsize $\|f\|_\infty\le1$}
\end{array}}\biggl|\int_X f\dd\mu\biggr|. 
$$
When dealing with~$\MM_+(X)$, we also use the {\it Kantorovich--Wasserstein\/} (also called {\it dual-Lipschitz\/}) metric defined by
$$
\|\mu_1-\mu_2\|_L^*
:=\sup_{
\begin{array}{c}
\mbox{\scriptsize $f\in L_b(X)$}\\[-4pt]
\mbox{\scriptsize $\|f\|_L\le1$}
\end{array}
}\biggl|\int_Xf\dd\mu_1-\int_Xf\dd\mu_2\biggr|,
\quad \mu_1,\mu_2\in\MM_+(X).
$$
The topology defined by the Kantorovich--Wasserstein distance coincides with that of weak convergence. We shall write $\mu_n\rightharpoonup\mu$ to denote the weak convergence of~$\{\mu_n\}$ to~$\mu$. 

\smallskip
\noindent
For an integrable function~$f:X\to\R$ and a measure $\mu\in\MM(X)$, we set
$$
\lag f,\mu\rag=\int_Xf(u)\,\mu(\dd u), \quad \|f\|_\mu=\int_X|f(u)|\,\mu(\dd u). 
$$

\smallskip
\noindent
Given a function $f:X\to\R$, we denote by~$f^+$ and~$f^-$ its positive and negative parts, respectively:
$$
f^+=\frac12(|f|+f),\quad f^-=\frac12(|f|-f).
$$

\smallskip
\noindent
Given two Banach spaces $X_1$ and $X_2$, we denote by $L(X_1,X_2)$ the  Banach space of continuous linear operators from~$X_1$ to~$X_2$ with the usual norm. 

\section{The model and the results}
\label{s1}
\subsection{The model}
\label{s1.1}
In this section, we describe a class of discrete-time stochastic systems for which we shall prove the LDP. 
Let $H$ be a real separable Hilbert space with a scalar product~$(\cdot,\cdot)$ and the corresponding norm~$\|\cdot\|$ and let $S:H\to H$ be a continuous mapping. We consider the random dynamical system
\begin{equation} \label{1.1}
u_k=S(u_{k-1})+\eta_k, \quad k\ge1,
\end{equation}
where $\{\eta_k\}$ is a sequence of independent identically distributed (i.i.d.) random variables in~$H$.  System~\eqref{1.1} defines a homogeneous family of Markov chains, and we denote by $P_k(u,\Gamma)$ its transition function and by~$\PPPP_k:C_b(H)\to C_b(H)$ and $\PPPP_k^*:\PP(H)\to\PP(H)$ the corresponding Markov operators. We shall assume that~$S$ satisfies the following three conditions (which are stronger version of those introduced in~\cite{KS-cmp2000}; see also Section~3.2.1 in~\cite{KS-book}). 

\medskip
{\bf (A) Regularity and stability.} 
{\sl The mapping~$S$ is continuously differentiable in the Fr\'echet sense. Moreover, for any $R>r>0$ there are positive constants~$C=C(R)$ and $a=a(R,r)<1$ and an integer $n_0=n_0(R,r)\ge1$ such that
\begin{gather}
\|S(u_1)-S(u_2)\|\le C(R)\|u_1-u_2\|\quad\mbox{for $u_1,u_2\in B_H(R)$}, \label{1.2}\\
\|S^n(u)\|\le \max\{a\|u\|,r\}\quad\mbox{for $u\in B_H(R)$, $n\ge n_0$},
\label{1.3}
\end{gather}
where $S^n$ is the $n^{\mathrm{th}}$ iteration of~$S$.}

\smallskip
Let us denote by~$\KK$ the support of the law for~$\eta_1$ and assume that it is a compact subset in~$H$. Given a closed subset $B\subset H$, define the sequence of sets
$$
\aA_0(B)=B, \quad \aA_k(B)=S(\aA_{k-1}(B))+\KK, \quad k\ge1,
$$
and denote by~$\aA(B)$ the closure in~$H$ of the union of~$\aA_k(B)$. We shall call~$\aA(B)$ the {\it domain of attainability from~$B$\/}. 

\medskip
{\bf (B) Dissipativity.}
{\sl There is $\rho>0$ and a non-decreasing integer-valued function $k_0=k_0(R)$ such that
\begin{equation} \label{1.4}
\aA_k(B_H(R))\subset B_H(\rho)\quad\mbox{for $R\ge0$, $k\ge k_0(R)$}. 
\end{equation}
}

\medskip
{\bf (C) Squeezing.}
{\sl There is an orthonormal basis~$\{e_j\}$ in~$H$ such that, for all $R>0$ and $u_1,u_2\in B_H(R)$,
\begin{equation} \label{1.5}
\|(I-{\mathsf P}_N)(S(u_1)-S(u_2))\|\le \gamma_N(R)\|u_1-u_2\|, 
\end{equation}
where ${\mathsf P}_N:H\to H$ denotes the orthogonal projection on the linear span of~$e_1,\dots,e_N$, and~$\{\gamma_N(R)\}$ is a decreasing sequence of positive numbers 
converging  to zero as $N\to\infty$.}

\smallskip
As for the sequence~$\{\eta_k\}$, we assume that it satisfies the following hypothesis:

\medskip
{\bf (D) Structure of the noise.}
{\sl The random variable~$\eta_k$ has the form
\begin{align}\label{E:eta}
\eta_k=\sum_{j=1}^\infty b_j\xi_{jk}e_j,
\end{align}
where $\{e_j\}$ is the orthonormal basis entering~{\rm(C)}, $b_j\ge0$ are constants such that
\begin{equation} \label{1.7}
\BBBB:=\sum^\infty_{j=1} b_j^2<\infty,
\end{equation}
and~$\xi_{jk}$ are independent scalar random variables. Moreover, the law of~$\xi_{jk} $ is absolutely continuous with respect to the Lebesgue measure, and the corresponding density~$p_j(r)$ is a Lipschitz continuous function such that $p_j (0) > 0$ and $\supp p_j \subset [-1,1]$.} 

\smallskip
Recall that a measure $\mu\in\PP(H)$ is said to be stationary for~\eqref{1.1} if $\PPPP_1^*\mu=\mu$. A proof of the following theorem can be found in Chapter~3 of~\cite{KS-book}. 

\begin{theorem}\label{T:him}
Suppose that Conditions~{\rm(A)--(D)} are fulfilled and that\,\footnote{\,Theorem~\ref{T:him} remains valid if finitely many~$b_j$ are non-zero. However, the main results of this paper on LDP will be proved under the stronger condition~\eqref{1.8}.}
\begin{equation} \label{1.8}
b_j\neq0\quad\mbox{for all $j\ge1$}. 
\end{equation}
Then there is a unique stationary measure $\mu\in\PP(H)$. Moreover, there are  constants $C>0$ and $\alpha>0$ such that, for any $\la\in\PP(H)$, we have
\begin{equation} \label{1.9}
\|\PPPP_k^*\la-\mu\|_L^*
\le Ce^{-\alpha k} \left(1+\int_H \|u\| \la(\dd u)  \right),\quad k\ge0.
\end{equation}
\end{theorem}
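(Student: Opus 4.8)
The plan is to follow the coupling method for randomly forced dissipative systems developed in Chapter~3 of~\cite{KS-book}. Conditions~(A)--(B) will supply a compact absorbing set carrying a stationary measure, while the squeezing property~(C) together with the non-degeneracy of the kicks~(D) will produce an exponential contraction of the transition semigroup in the dual-Lipschitz metric; uniqueness of the stationary measure will then be automatic, as it is contained in~\eqref{1.9}.

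\emph{Existence.} First I would show that the dissipativity~\eqref{1.4}, combined with the compactness of~$\KK$ and the squeezing estimate~\eqref{1.5}, provides a compact set $\XX\subset B_H(\rho)$ with the property that $P_k(u,\XX)=1$ for every $u\in B_H(\rho)$ and every $k\ge k_0(\rho)+1$. Indeed, for such $k$ the set $\aA_k(B_H(\rho))$ lies in $B_H(\rho)$, and writing $u=S(v)+\eta$ with $v\in B_H(\rho)$, $\eta\in\KK$, the bounds $\|(I-{\mathsf P}_N)(S(v)-S(0))\|\le\gamma_N(\rho)\rho$, $(I-{\mathsf P}_N)S(0)\to0$ and $\sup_{\eta\in\KK}\|(I-{\mathsf P}_N)\eta\|\to0$ show that $\bigcup_{k\ge k_0(\rho)+1}\aA_k(B_H(\rho))$ is bounded with uniformly small high-mode tail, hence totally bounded; take $\XX$ to be its closure. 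Applying the Bogolyubov--Krylov argument to the Ces\`aro averages $n^{-1}\sum_{k=0}^{n-1}P_k(u_0,\cdot)$, which charge~$\XX$ with probability at least $1-(k_0(\rho)+1)/n$, then yields a stationary measure $\mu$ supported in~$\XX$; in particular $\int_H\|u\|\,\mu(\dd u)<\infty$.

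\emph{Uniqueness and exponential mixing.} The core is the construction of a coupling, carried out in three steps. (i)~\emph{Foias--Prodi stability.} Using~\eqref{1.2} and~\eqref{1.5}, if $(u_k)$ and $(u_k')$ are two trajectories driven by the \emph{same} realisation of $\{\eta_k\}$ and staying in a fixed ball, one derives a bound of the form $\|u_k-u_k'\|\le C\bigl(\gamma_N(\rho)^{k}\|u_0-u_0'\|+\sum_{j=1}^{k}\gamma_N(\rho)^{k-j}\|{\mathsf P}_N(u_j-u_j')\|\bigr)$, so that keeping the low modes ${\mathsf P}_Nu_k$ and ${\mathsf P}_Nu_k'$ identical over a long time interval forces the full trajectories exponentially close. (ii)~\emph{Gluing the low modes.} By Condition~(D), the law of ${\mathsf P}_N\eta_k$ on ${\mathsf P}_NH$ has a continuous density which is strictly positive near the origin and supported in a fixed ball; since ${\mathsf P}_NS$ maps $B_H(\rho)$ into a bounded set, the maximal coupling of the two one-step kernels projected to ${\mathsf P}_NH$ makes ${\mathsf P}_Nu_1={\mathsf P}_Nu_1'$ with a probability bounded below uniformly on $\XX\times\XX$, and iterating keeps the low modes glued over any prescribed number of steps with positive probability. (iii)~\emph{Recurrence and decay.} Combining (i)--(ii) with the return to~$\XX$ furnished by~\eqref{1.4}, I would build a coupling $(U_k,U_k')$ of the two chains and a sequence of stopping times along which $\E\,[\,\|U_k-U_k'\|\wedge1\,]$ decreases by a fixed factor over each cycle, which gives $\|P_k(u_1,\cdot)-P_k(u_2,\cdot)\|_L^*\le Ce^{-\alpha k}\bigl(1+\|u_1\|+\|u_2\|\bigr)$. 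Finally, \eqref{1.9} follows by writing $\PPPP_k^*\la-\mu=\int_H\bigl(P_k(u,\cdot)-P_k(u',\cdot)\bigr)\,(\la\otimes\mu)(\dd u\,\dd u')$, using stationarity of~$\mu$, applying the Dirac estimate and using $\int_H\|u'\|\,\mu(\dd u')<\infty$.

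\emph{Main obstacle.} The delicate point is step~(iii): arranging the stopping times so that the events ``the low modes have stayed glued long enough for the squeezing in~(i) to take effect'' and ``both copies are back in the absorbing set~$\XX$'' recur together with probability bounded below, and extracting from this a genuine geometric rate rather than mere convergence. This rests on the uniform lower bound for the overlap of the low-mode kernels from step~(ii), which in turn uses the Lipschitz continuity and the strict positivity at the origin of the densities~$p_j$ in an essential way.
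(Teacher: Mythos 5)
Your proposal follows the coupling approach of Chapter~3 of~\cite{KS-book}, which is exactly the reference the paper cites for this theorem (the paper does not reprove it). The outline — compact invariant absorbing set via~(B)--(C), Bogolyubov--Krylov for existence, and the Foias--Prodi / low-mode maximal coupling argument with recurrence to the absorbing set for the exponential mixing estimate — matches the cited proof.
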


We conclude this subsection by a simple remark on the support of the stationary distribution~$\mu$. Let us denote by~$\aA=\aA(\{0\})$ the domain of attainability from zero. Since~$\aA$ is an invariant subset for~\eqref{1.1}, it carries a stationary measure. Since the stationary measure is unique, we must have $\supp\mu\subset\aA$. On the other hand, inequality~\eqref{1.3} and the inclusion $0\in\supp\DD(\eta_1)$ imply that $P_k(u,B_H(r))>0$ for any $u\in\aA$, $r>0$, and $k\gg1$. Combining this fact with the Kolmogorov--Chapman relation, one easily 
 proves that $\supp\mu=\aA$. 

\subsection{The results}
\label{s2.1}
Before formulating the main results of this paper, we  recall some standard definitions from the theory of large deviations (e.g., see Chapter~6 in~\cite{ADOZ00}). Let~$X$ be a Polish space and let~$\PP(X)$ be the space of probability measures on~$X$ endowed with the topology of weak convergence (generated by the Kantorovich--Wasserstein distance). Recall that a {\it random probability measure\/} on~$X$ is defined as a measurable mapping from a probability space~$(\Omega,\FF,\IP)$ to~$\PP(X)$. A mapping $I : \PP(X) \to [0, +\ty]$ is called a {\it rate function\/} if it is lower semicontinuous, and a rate function~$I$ is said to be {\it good\/} if the level set $\{\sigma\in\PP(X) : I(\sigma) \le \alpha \}$ is compact for any $\alpha \in[0, +\infty)$. For a measurable set $\Lambda\subset\PP(X)$, we write $I(\Lambda)=\inf_{\sigma\in\Lambda}I(\sigma)$.

\begin{definition}
Let~$\{\zeta_k=\zeta_k^\omega,k\ge1\}$ be a sequence of random probability measures on~$\aA$. We shall say that $\{\zeta_k\}$ satisfies the LDP with a rate function~$I$ if the following two properties are satisfied. 
\begin{description}
\item[Upper bound.] 
For any closed subset $F\subset\PP(X)$, we have
\begin{align}\label{E:ub}
\limsup_{k\to\infty} \frac1k\log \pP\{\zeta_k\in F\}\le -I(F).
\end{align}
\item[Lower bound.] 
For any open subset $G\subset\PP(X)$, we have
\begin{align}\label{E:lb}
\liminf_{k\to\infty} \frac1k\pP\{\zeta_k\in G\}\ge -I(G).
\end{align}
\end{description}
\end{definition}   

We now consider the family of Markov chains defined by~\eqref{1.1}. To an arbitrary random variable~$u_0$ in~$H$, which we always assume to be independent of~$\{\eta_k\}$,  one associates a   family of {\it occupation measures\/} by the formula
\begin{align}\label{E:1c}
\zeta_k:=  \frac{1}{k}\sum_{n=0}^{k-1} \delta_{u_n},
\end{align}
where $\delta_u$ stands for the Dirac measure concentrated at~$u$. Recall that~$\aA$ denotes the domain of attainability from zero (see the end of Section~\ref{s1.1}). It is a compact subset of~$H$, invariant under the random dynamics defined by~\eqref{1.1}. Note that if the support of~$\DD(u_0)$ is contained in~$\aA$, then~$\zeta_k$ is also supported by~$\aA$. The following  theorem is the main result of this paper. 

\begin{theorem}\label{main} 
Let Hypotheses~{\rm(A)--(D)} and Condition~\eqref{1.8} be fulfilled and let~$u_0$ be an arbitrary random variable in~$H$ whose law is supported by~$\aA$. Then the family~$\{\zeta_k,k\ge1\}$ of random probability measures on~$\aA$ satisfies the LDP with a good rate function~$I$ defined by
\begin{align}\label{E:III} 
I(\sigma)=\sup_{V\in C(\aA)}  \bigl(\lag V,\sigma\rag-Q(V) \bigr),
\quad \sigma\in\PP(\aA),
\end{align}
where  $Q(V)$ is a $1$-Lipschitz convex function such that $Q(C)=C$ for any $C\in\R$. 
\end{theorem}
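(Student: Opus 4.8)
The plan is to apply Kifer's criterion for large deviations of empirical measures, which reduces the LDP to two ingredients: existence of a limiting log-moment generating function, and a uniqueness/Feller-type condition guaranteeing that the equilibrium state is unique for every "tilted" dynamics. Concretely, for $V\in C(\aA)$ define
\begin{equation}\label{eq:Q-def}
Q(V)=\lim_{k\to\infty}\frac1k\log\E\exp\Bigl(\sum_{n=0}^{k-1}V(u_n)\Bigr),
\end{equation}
and one must first show this limit exists and is independent of the law of $u_0$ (supported by $\aA$). This is where the abstract machinery of Section~\ref{s3} on large-time asymptotics of \emph{generalised Markov semigroups} enters: the operators $\PPPP_k^V f(u)=\E_u\bigl[f(u_k)\exp(\sum_{n=0}^{k-1}V(u_n))\bigr]$ form such a semigroup, and the asymptotic $\PPPP_k^V f\sim \lambda_V^k\,h_V\,\lag f,\mu_V\rag$ (with $\lambda_V=e^{Q(V)}$ the principal eigenvalue, $h_V$ a positive eigenfunction, and $\mu_V$ an eigenmeasure) yields both existence of the limit in~\eqref{eq:Q-def} and its independence of the initial distribution. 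The key hypothesis needed for this abstract result to apply is a \emph{uniform Feller property} for the family $\{\PPPP_k^V\}$, whose verification via the Lyapunov--Schmidt reduction is precisely the content of Section~\ref{sec-unif}; I would invoke it here as a black box.

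Next I would establish the elementary structural properties of $Q$. That $Q(V+C)=Q(V)+C$ for constants $C\in\R$ is immediate from~\eqref{eq:Q-def} (pulling $e^{kC}$ out of the expectation), which gives $Q(C)=Q(0)+C=C$ since $Q(0)=0$. The bound $|Q(V_1)-Q(V_2)|\le\|V_1-V_2\|_\infty$ follows by comparing the exponential sums pointwise: $\sum_n V_1(u_n)\le \sum_n V_2(u_n)+k\|V_1-V_2\|_\infty$, so $Q(V_1)\le Q(V_2)+\|V_1-V_2\|_\infty$, and symmetrically. Convexity of $Q$ comes from H\"older's inequality applied to $\exp(\sum_n(\theta V_1+(1-\theta)V_2)(u_n))$ with exponents $1/\theta$ and $1/(1-\theta)$. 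Thus $Q$ is a $1$-Lipschitz convex functional on $C(\aA)$ with $Q(C)=C$, as claimed, and $I$ defined by the Legendre-type transform~\eqref{E:III} is automatically a rate function (a sup of continuous affine functionals of $\sigma$ in the weak topology is lower semicontinuous); goodness of $I$ is free because $\aA$ is compact, so $\PP(\aA)$ is compact and every closed level set is compact.

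With $Q$ in hand, I would verify the hypotheses of Kifer's theorem (stated in the Appendix): compactness of the state space $\aA$ (Condition~(B)), the existence of the limit~\eqref{eq:Q-def} uniformly over initial conditions, and — the crucial point — that for each $V$ there is a \emph{unique} measure $\mu_V$ achieving the supremum in the variational characterisation of $\lambda_V$, equivalently that the tilted dynamics has a unique equilibrium state. This uniqueness is again delivered by the uniform-Feller/large-time-asymptotics result of Sections~\ref{s3}--\ref{sec-unif}, since the eigenmeasure $\mu_V$ there is unique. Kifer's criterion then yields the upper bound~\eqref{E:ub} for all closed sets and the lower bound~\eqref{E:lb} for all open sets, with rate function exactly the Legendre transform~\eqref{E:III}.

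\textbf{Main obstacle.} The genuinely hard part is not the formal manipulations above but the verification of the uniform Feller property for the generalised Markov semigroups $\{\PPPP_k^V\}$, which underpins both the existence of the limit in~\eqref{eq:Q-def} and the uniqueness of the equilibrium state $\mu_V$. Unlike the unperturbed case ($V=0$), where exponential mixing is already available from Theorem~\ref{T:him}, the tilted operators are not Markov (they do not preserve constants), so the standard coupling arguments must be replaced by the Lyapunov--Schmidt reduction of Section~\ref{sec-unif}, combined with the abstract asymptotic theory of Section~\ref{s3}. In the proof of Theorem~\ref{main} itself I would treat these as established inputs and concentrate on assembling them through Kifer's criterion; a secondary technical point is checking that the weak topology on $\PP(\aA)$ (rather than the $\tau$-topology) is exactly what Kifer's hypotheses require, which is where compactness of $\aA$ — guaranteed by Conditions~(A)--(B) — does the essential work.
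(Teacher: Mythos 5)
Your overall scheme coincides with the paper's: apply Kifer's criterion, establish the limit $Q(V)$ through the asymptotics of the generalised Feynman--Kac semigroup (Theorem~\ref{t3.1}), and supply the uniform Feller hypothesis for that semigroup via the Lyapunov--Schmidt reduction of Section~\ref{sec-unif}. The structural facts you record about $Q$ (additivity over constants, $1$-Lipschitz bound via pointwise comparison of the exponential sums, convexity via H\"older) are all correct and match the paper's argument. The choice to index the sum as $\sum_{n=0}^{k-1}$ instead of the paper's $\sum_{n=1}^{k}$ is harmless — the paper handles the shift through the exponential equivalence of $\zeta_k$ and $\hat\zeta_k$ (Lemma~\ref{l5.2}), whereas your indexing bypasses that step; the resulting semigroups are conjugate by multiplication by $e^{V}$, so the uniform Feller property transfers.

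The genuine gap is in the treatment of the uniqueness of the equilibrium state, which is the harder half of Kifer's hypotheses. You write that uniqueness of the equilibrium state is ``delivered by\dots since the eigenmeasure $\mu_V$ there is unique,'' thereby conflating two distinct objects. The eigenmeasure $\mu_V$ furnished by Theorem~\ref{t3.1} satisfies $\PPPP_1^{V*}\mu_V=\lambda_V\mu_V$; an equilibrium state $\sigma_V$ is a maximizer of $\langle V,\sigma\rangle - I(\sigma)$. It is indeed true that the unique equilibrium state is $\nu_V = h_V\mu_V$, but proving this is a substantive step, not an immediate corollary of the uniqueness of $\mu_V$. The paper devotes Lemmas~\ref{L:DV}, \ref{L:une}, \ref{L:kn} to it: one needs the Donsker--Varadhan representation $I(\sigma)=-\inf_{g\in C_+(\aA)}\langle\log(\PPPP_1 g/g),\sigma\rangle$, a variational argument (differentiating in $g$ and applying Jensen) showing the infimum is attained at $g=h_V e^{V}$ if and only if $\sigma=\nu_V$, and the uniqueness of the stationary measure of the normalised Markov semigroup $\SSS^V_k g=\lambda_V^{-k}h_V^{-1}\PPPP_k^V(gh_V)$. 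Your proposal would need to be supplemented by this chain of arguments to close the gap.

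A secondary omission: the uniform Feller property of Theorem~\ref{main-tech}, hence the eigentriple $(\lambda_V,h_V,\mu_V)$ from Theorem~\ref{t3.1}, is only available for $V$ in the dense subspace $\VV$ of functions depending on finitely many Fourier modes. Kifer's criterion requires the limit~\eqref{eq:Q-def} for \emph{all} $V\in C(\aA)$ but uniqueness of the equilibrium state only on a dense subspace, so the programme is consistent — but you must first prove the limit exists for $V\in\VV$ and then extend to $C(\aA)$ using the pre-limit $1$-Lipschitz bound, which is the role of inequality~\eqref{E:3.6} in the paper. You state the Lipschitz bound as a property of $Q$ rather than of the prelimit quantities, so the extension step is implicit in your write-up and should be made explicit.
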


Note that the lower semi-continuity of~$I$ is built in its definition, while the fact that~$I$ is a good rate function follows from the compactness of~$\PP(\aA)$ in the weak topology. Choosing  suitable open and closed sets in the LDP for occupation measures, we obtain the asymptotics of the time-averages for various functionals of trajectories of~\eqref{1.1}. For instance, let $f:\aA\to\R^m$ be a continuous mapping and let $\Gamma\subset\R^m$ be a Borel set. Define 
$$
F_\Gamma=\{\sigma\in\PP(\aA):\langle f,\sigma\rangle\in \overline\Gamma\}, \quad
G_\Gamma=\{\sigma\in\PP(\aA):\langle f,\sigma\rangle\in \dot \Gamma\},
$$
where $\overline\Gamma$ and $\dot\Gamma$ denote the closure and interior of~$\Gamma$, respectively. In view of the LDP, we have
\begin{align}
\limsup_{k\to\infty}\frac1k\log
\IP\biggl\{\frac1k\sum_{n=0}^{k-1}f(u_n)\in \Gamma\biggr\}
&\le -I(F_\Gamma), \label{ldu}\\
\liminf_{k\to\infty}\frac1k\log
\IP\biggl\{\frac1k\sum_{n=0}^{k-1}f(u_n)\in \Gamma\biggr\}
&\ge -I(G_\Gamma).
\label{ldl} 
\end{align}

Theorem~\ref{main} provides the LDP for the occupation measures~\eqref{E:1c}. Some further analysis combined with the Dawson--G\"artner theorem enables one to derive a process level LDP for trajectories of~\eqref{1.1} issued from~$\aA$. Namely, denote by~$\HHH=H^{\Z_+}$ the direct product of countably many copies of~$H$ and, given a trajectory~$\{u_k\}$ for~\eqref{1.1}, define a sequence of random probability measures on~$\HHH$ by the relation
\begin{equation} \label{rpm3}
\zzeta_k=\frac1k\sum_{n=0}^{k-1}\delta_{\uuu_n},\quad k\ge1,
\end{equation}
where we set $\uuu_n=(u_k,k\ge n)$. 

\begin{theorem}\label{mainty} 
Let the hypotheses of Theorem~\ref{main} be fulfilled and let~$u_0$ be an arbitrary random variable in~$H$ whose law is supported by~$\aA$. Then the family of random probability measures~$\{\zzeta_k,k\ge1\}$ satisfies the LDP with a good rate function~$\III:\PP(\HHH)\to[0,+\infty]$, which is equal to~+$\infty$ outside a compact subset.
\end{theorem}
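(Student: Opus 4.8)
The plan is to deduce Theorem~\ref{mainty} from Theorem~\ref{main} by the standard projective-limit (Dawson--G\"artner) machinery, together with the exponential tightness that is already implicit in the fact that trajectories issued from~$\aA$ remain in the compact invariant set~$\aA$. First I would fix the approach at the level of finite-dimensional marginals. For each $m\ge1$ let $\pi_m:\HHH\to H^m$ be the projection $\uuu\mapsto(u_0,\dots,u_{m-1})$, and let $\Pi_m:\PP(\HHH)\to\PP(H^m)$ be the induced pushforward. The measures $\zzeta_k$ are occupation measures for the shifted trajectory $\uuu_n=(u_k,k\ge n)$, which is itself a Markov chain on the state space $\HHH$ (or, equivalently, on its invariant compact subset $\baA:=\aA^{\Z_+}\cap\{\text{orbits of \eqref{1.1}}\}$), with transition governed by one step of~\eqref{1.1}. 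The key observation is that $\Pi_m\zzeta_k$ is, up to an $O(m/k)\to0$ boundary correction, the occupation measure of the block chain $(u_n,\dots,u_{n+m-1})$ on $H^m$; and the block chain of a chain satisfying Hypotheses~(A)--(D) again falls within the same framework (its one-step map on the first $m-1$ coordinates is a shift, and on the last coordinate it is $S$ plus an independent kick). Hence Theorem~\ref{main}, applied to the $m$-block chain, yields an LDP for $\Pi_m\zzeta_k$ on $\PP(H^m)$ — more precisely on $\PP$ of the compact attainability set in $H^m$ — with a good rate function $I_m$ of the Donsker--Varadhan type~\eqref{E:III}.

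\smallskip
Next I would invoke the Dawson--G\"artner theorem for projective limits of LDPs (as cited in the excerpt, \cite{DG-1987}). Since $\PP(\HHH)=\varprojlim\PP(H^m)$ with the connecting maps $\Pi_{m,m+1}$ and since the family $\{\zeta_k^{(m)}:=\Pi_m\zzeta_k\}$ satisfies an LDP on each $\PP(H^m)$ with good rate function $I_m$, the full sequence $\{\zzeta_k\}$ satisfies an LDP on $\PP(\HHH)$ with rate function
\begin{equation*}
\III(\QQ)=\sup_{m\ge1} I_m(\Pi_m\QQ),\quad \QQ\in\PP(\HHH).
\end{equation*}
Here one must check the mild compatibility hypothesis of Dawson--G\"artner, namely that the $I_m$ are consistent under the projections, i.e. $I_m(\nu)=\inf\{I_{m+1}(\tilde\nu):\Pi_{m,m+1}\tilde\nu=\nu\}$; this is a direct consequence of the contraction principle applied to the LDPs supplied by Theorem~\ref{main} for the $m$- and $(m+1)$-block chains, since those LDPs are compatible by construction. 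The claim that $\III$ is good and equals $+\infty$ outside a compact set then follows because $\PP(\baA)$, the set of probability measures carried by the compact set $\baA\subset\HHH$, is itself compact in the weak topology, and $\zzeta_k$ is supported by $\baA$ whenever $u_0$ is; the supremum of good rate functions that all vanish off a common compact set is a good rate function with the same property.

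\smallskip
A few technical points require care. One is the identification of the state space: $\HHH=H^{\Z_+}$ is Polish in the Tikhonov topology, and the shifted-trajectory chain lives on the compact invariant set $\baA$, which is exactly the domain of attainability from zero for the block dynamics; I would record this as a lemma parallel to the discussion at the end of Section~\ref{s1.1}. A second is that the block map $\Sigma_m:H^m\to H^m$, $(w_1,\dots,w_m)\mapsto(w_2,\dots,w_m,S(w_m))$, together with the block kick $(0,\dots,0,\eta)$, satisfies Hypotheses~(A)--(D) on $H^m$: regularity and the squeezing estimate are inherited from $S$ coordinatewise, dissipativity holds because after $m$ steps every coordinate has been through $S$ at least once, and the noise structure~(D) is preserved (the basis of $H^m$ being the obvious product basis, with $b_j\ne0$ in the last block and, after one more step, in all blocks). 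A third point is the harmless discrepancy between $\Pi_m\zzeta_k$ and the genuine $m$-block occupation measure, which differs only in the last $m-1$ terms and hence by a measure of total variation $\le (m-1)/k$; this vanishes as $k\to\infty$ and does not affect the exponential rate. The main obstacle, I expect, is the careful verification that the $m$-block chain genuinely satisfies all of (A)--(D) with the uniformity constants required by Theorem~\ref{main} — in particular producing the integer $n_0$ and the function $k_0$ for the block dynamics — and the bookkeeping needed to make the Dawson--G\"artner compatibility condition precise; the projective-limit argument itself is then essentially formal.
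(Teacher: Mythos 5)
The overall architecture — reduce to finite-dimensional marginals, prove an LDP for each $m$-block occupation measure, then assemble via the Dawson--G\"artner projective-limit theorem, discarding the $O(m/k)$ boundary correction by exponential equivalence — is exactly what the paper does (Section~\ref{s6}, Theorem~\ref{mainm}, and Theorem~\ref{t5.2}). However, there is a genuine gap in your central technical claim, namely that the $m$-block chain \emph{``satisfies Hypotheses~(A)--(D) on $H^m$''} so that Theorem~\ref{main} can be invoked as a black box.

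This is false, on two counts. First, Condition~(D) together with~\eqref{1.8} requires the kick to have a nondegenerate density on \emph{every} finite-dimensional block of the basis ($b_j\ne 0$ for all $j$). The block kick $\eeta_k=(0,\dots,0,\eta_{k+m-1})$ is supported on the last factor only; it has $b_j=0$ on the first $m-1$ factors of $H^m$, so it is fully degenerate there. Your parenthetical remark that $b_j\ne 0$ holds ``after one more step, in all blocks'' is a statement about the $m$-step transition kernel, not about the structure of the one-step kick, and it is the latter that Hypothesis~(D) constrains. Second, Condition~(C) also fails for $\Sigma_m$: for the first $m-1$ coordinates the map is a pure shift, $(\Sigma_m w)_j = w_{j+1}$ for $j<m$, so $\|(I-{\mathsf P}_N)(\Sigma_m(w^1)-\Sigma_m(w^2))\|$ has contributions $\|(I-{\mathsf P}_N)(w^1_{j+1}-w^2_{j+1})\|$ that are not small in $N$; there is no decreasing $\gamma_N\to0$ bounding these in terms of $\|w^1-w^2\|$.

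Because~(C) and~(D) fail, you cannot derive the LDP for the block chain by citing Theorem~\ref{main}. The paper's Section~\ref{s6} does not do this: instead it re-runs the \emph{proof} of Theorem~\ref{main} for the block dynamics. The two ingredients — uniform irreducibility and the uniform Feller property for the generalised semigroup $\PPPP_k^V$ built from the block chain — are re-established directly: irreducibility by repeating the argument for~\eqref{1.1} (using that $\aA^{(m)}$ is the attainability set for~\eqref{4.21}), and the uniform Feller property by redoing the Lyapunov--Schmidt reduction of Section~\ref{sec-unif} with functions $f,V$ depending on ${\mathsf P}_N\uuu=({\mathsf P}_N u^1,\dots,{\mathsf P}_N u^m)$, leading to the analogue~\eqref{4.24} of formula~\eqref{4.12*} and then to assertion~(P). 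Your proposal needs to be amended to follow this route; once that is done, the Dawson--G\"artner assembly, the compactness of $\PP(\aA^{(m)})$ ensuring goodness of $I_m$, and the boundary-correction argument are all as you describe.
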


\subsection{Applications}
\label{s2.2}
\subsubsection*{Two-dimensional Navier--Stokes system}
Let us consider the Navier--Stokes system~\eqref{E:1} in which~$\eta(t,x)$ is a random kick force of the form~\eqref{E:4}. We assume that the kicks~$\{\eta_k\}$ form a sequence of i.i.d.\ random variables in the space~$H$ (see~\eqref{ps}). Normalising the solutions of~\eqref{E:1} to be right continuous and denoting $u_k=u(k,x)$, we see that any solution of~\eqref{E:1} satisfies relation~\eqref{1.1} in which~$S$ stands for the time-$1$ shift along trajectories of the homogeneous Navier--Stokes system (e.g., see Section~2.3 in~\cite{KS-book}). We claim that Theorems~\ref{main} and \ref{mainty} can be applied to~\eqref{1.1} with the above choice of~$S$, provided that we restrict our consideration to trajectories lying in~$\aA=\aA(\{0\})$. Indeed, the differentiability of the flow map for the Navier--Stokes system is well known (e.g., see Section~7.5 in~\cite{BV1992}), and all other properties entering Conditions~(A) and~(B) are checked in~\cite{KS-cmp2000}. Furthermore, the squeezing property~(C) is satisfied for any choice of an orthonormal basis~$\{e_j\}$ in~$H$ (cf.\ proof of Proposition~\ref{NS2} below). We thus obtain the following result. 

\begin{proposition} \label{NS1}
Let the random variables $\{\eta_k\}$ satisfy Condition~{\rm(D)} with $b_j\ne0$ for all $j\ge1$, let~$u_0$ be an arbitrary $H$-valued random variable which is independent of~$\{\eta_k\}$ and whose law is supported by~$\aA$, and let~$\{u_k\}$ be the corresponding trajectory of~\eqref{1.1}. Then the occupation measures~$\zeta_k$ and $\zzeta_k$ defined by~\eqref{E:1c} and \eqref{rpm3} satisfy the LDP with  good rate functions. 
\end{proposition}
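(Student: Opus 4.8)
The plan is to check that the time-$1$ resolving operator~$S$ of the homogeneous Navier--Stokes system~\eqref{E:1} (with $\eta\equiv0$) satisfies Conditions~(A)--(D); once this is done, $\aA=\aA(\{0\})$ is automatically a compact set invariant under~\eqref{1.1} (see Section~\ref{s1.1}), and Theorems~\ref{main} and~\ref{mainty} apply verbatim and yield the LDP for~$\zeta_k$ and~$\zzeta_k$ with good rate functions. I would begin by recalling (e.g.\ from Section~2.3 in~\cite{KS-book}) that, for a kick force of the form~\eqref{E:4} with i.i.d.\ kicks $\eta_k\in H$, problem~\eqref{E:1}, \eqref{E:3} is globally well posed in~$H$, that its right-continuous solutions obey the recursion $u_k=S(u_{k-1})+\eta_k$ with $u_k=u(k,\cdot)$, and that~$S$ is precisely the time-$1$ flow map of the unforced equation. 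The whole task then reduces to verifying~(A)--(D); all the estimates below are either already in~\cite{KS-cmp2000} or are standard energy and enstrophy bounds for the two-dimensional Navier--Stokes system, so I only indicate their origin.

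For Condition~(A), the Fr\'echet differentiability of~$S$ and the local Lipschitz bound~\eqref{1.2} are classical (see Section~7.5 in~\cite{BV1992} and~\cite{KS-cmp2000}), the Lipschitz constant on~$B_H(R)$ coming from the continuous-dependence estimate for the difference of two solutions. The stability bound~\eqref{1.3} follows from the uniform exponential decay of the energy of the unforced equation: testing~\eqref{E:1} (with $\eta\equiv0$) against~$u$ and using the Poincar\'e inequality gives $\|u(t)\|\le e^{-c_0 t}\|u(0)\|$ with $c_0>0$ depending only on~$\nu$ and~$D$, whence $\|S^n(u)\|\le e^{-c_0 n}\|u\|$ and~\eqref{1.3} holds with, say, $a=e^{-c_0 n_0}<1$. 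The same energy balance with the kicks taken into account, together with $\KK\subset B_H(b)$ (a consequence of~\eqref{0.4}), yields $\aA_k(B_H(R))\subset B_H\bigl(e^{-c_0 k}R+b(1-e^{-c_0})^{-1}\bigr)$, which gives Condition~(B) with any $\rho>b(1-e^{-c_0})^{-1}$ and a suitable non-decreasing function $k_0(R)$.

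The one point that deserves a separate argument is that Condition~(C) holds for an \emph{arbitrary} orthonormal basis~$\{e_j\}$ of~$H$. Here I would use parabolic smoothing: if $u_1,u_2$ are solutions with $u_i(0)\in B_H(R)$, their difference $w=u_1-u_2$ solves a linear equation whose coefficients are controlled by~$R$, and the standard enstrophy estimate for this equation gives $\|w(1)\|_1\le C(R)\|w(0)\|$. Thus $S(u_1)-S(u_2)$ lies in the ball of radius $C(R)\|u_1-u_2\|$ of the space $\{v\in H:\|v\|_1<\infty\}$, and this space embeds compactly into~$H$. Since the embedding is compact, the tails $\sup\{\|(I-{\mathsf P}_N)v\|:v\in H,\,\|v\|_1\le1\}$ tend to~$0$ as $N\to\infty$ for \emph{every} orthonormal basis~$\{e_j\}$, so~\eqref{1.5} holds with $\gamma_N(R)=C(R)\sup\{\|(I-{\mathsf P}_N)v\|:\|v\|_1\le1\}\to0$ (cf.\ the proof of Proposition~\ref{NS2}). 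Condition~(D) with $b_j\ne0$ for all~$j$ is exactly the hypothesis of the proposition; note also that $\sum_j b_j^2<\infty$ together with $|\xi_{jk}|\le1$ guarantees that the support~$\KK$ of~$\DD(\eta_1)$ is compact in~$H$, since the set $\{\sum_j b_j r_j e_j:|r_j|\le1\}$, which contains~$\KK$, is precompact in~$H$ precisely because $\sum_{j>N}b_j^2\to0$. With Conditions~(A)--(D) and~\eqref{1.8} verified, Theorems~\ref{main} and~\ref{mainty} apply and finish the proof.

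I expect the only mildly delicate step to be the smoothing estimate $\|S(u_1)-S(u_2)\|_1\le C(R)\|u_1-u_2\|$ underlying Condition~(C); the remaining verifications are routine bookkeeping on top of the estimates of~\cite{KS-cmp2000} and the content of the abstract Theorems~\ref{main} and~\ref{mainty}.
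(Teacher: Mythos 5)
Your proposal is correct and follows the same route as the paper: realise the kicked Navier--Stokes system in the form~\eqref{1.1} with $S$ the time-$1$ flow of the unforced equation, verify Conditions~(A)--(D) on~$\aA$, and invoke Theorems~\ref{main} and~\ref{mainty}. The paper simply delegates~(A)--(B) to~\cite{KS-cmp2000} and, for~(C), points to the proof of Proposition~\ref{NS2}, whose key ingredient is exactly the parabolic smoothing estimate $\|w(1)\|_1\le C(R)\|w(0)\|$ (see~\eqref{2.17}) combined with the compactness of $H^1\hookrightarrow L^2$ and the basis-independent decay of tails that you spell out.
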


 In particular, taking for~$u_0$ a random variable distributed according to the stationary measure~$\mu$, we obtain Theorems~A and~B of the Introduction. Furthermore, in view of the discussion after Theorem~\ref{main}, we have an LDP for the time-averages of continuous functionals $f:H\to\R^m$ calculated on trajectories of~\eqref{1.1}. This result is applicable, for instance, to the energy functional $f(u)=\frac12\int_D|u(x)|^2\dd x$. 

To treat other physically relevant observables, such as the enstrophy or the correlation tensor, we need to change the phase space of the problem, making it more regular. More precisely, let us define the space
$$
U=H\cap H_0^1(D)\cap H^2(D)
$$
(where $H^s(D)$ denotes the usual Sobolev space of order~$s$) and endow it with the usual scalar product in~$H^2$. Since the flow-map for the Navier--Stokes system preserves the $H^2$-regularity, system~\eqref{1.1} can be studied in the space~$U$, provided that the random kicks also 
belong to~$U$. We have the following result. 

\begin{proposition} \label{NS2}
Let~$\{\eta_k\}$ be a sequence of i.i.d.\ random variables in~$U$ of the form~\eqref{E:eta}, in which~$\{e_j\}$ is an orthonormal basis in~$U$, and~$\{b_j\}$ and~$\{\xi_{jk}\}$ are the same as in Condition~{\rm(D)}. Assume that  $b_j\ne0$ for all $j\ge1$. Then the LDP holds for the  occupation measures~$\zeta_k$ and $\zzeta_k$ of any trajectory whose initial state~$u_0$ is a $U$-valued random variable with range in~$\aA$. 
\end{proposition}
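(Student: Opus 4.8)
\emph{Plan of proof.} The plan is to check that, on the phase space $U=H\cap H_0^1(D)\cap H^2(D)$, the time-$1$ shift $S$ along trajectories of the unforced $2$D Navier--Stokes system, together with the kicks $\{\eta_k\}$, satisfies Hypotheses~{\rm(A)--(D)} and Condition~\eqref{1.8}; the proposition then follows at once from Theorems~\ref{main} and~\ref{mainty} applied with $X=U$, the compact invariant set being again $\aA=\aA(\{0\})\subset U$. Conditions~{\rm(D)} and~\eqref{1.8} hold by hypothesis, now with $\{e_j\}$ an orthonormal basis of~$U$. First I would recall the classical facts that in dimension two problem~\eqref{E:1}, \eqref{E:3} is well posed in~$U$, that $S$ maps $U$ into~$U$, and that $S$ is Fr\'echet differentiable on~$U$ (see Section~7.5 in~\cite{BV1992} and Section~2.3 in~\cite{KS-book}); this gives the regularity assertion in~{\rm(A)}.

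Next I would establish the quantitative parts of~{\rm(A)} and~{\rm(B)} --- the Lipschitz bound~\eqref{1.2} and the stability and dissipativity estimates~\eqref{1.3}, \eqref{1.4} measured in the $H^2$-norm. These come from the standard a~priori bounds for the $2$D Navier--Stokes system (energy, enstrophy and $H^2$ estimates), which show that $\|S^n(u)\|\le\max\{a\|u\|,r\}$ for $n\ge n_0(R,r)$ and $u\in B_U(R)$, and that $B_U(\rho)$ is absorbing; the arguments are precisely those of~\cite{KS-cmp2000}, performed in the $H^2$-norm rather than in~$H$ and using the smoothing effect of the flow.

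The core of the matter is the squeezing property~{\rm(C)} in the $U$-topology, which I would prove for an \emph{arbitrary} orthonormal basis $\{e_j\}$ of~$U$ (the version in $H$ needed in Proposition~\ref{NS1} is proved in the same way). Fix $R>0$ and $u_1,u_2\in B_U(R)$, and let $w(t)=u_1(t)-u_2(t)$ be the difference of the corresponding solutions of the unforced Navier--Stokes system, which solves the linear problem
\begin{equation*}
\dot w-\nu\Delta w+\langle u_1,\nabla\rangle w+\langle w,\nabla\rangle u_2+\nabla q=0,\quad \diver w=0,\quad w|_{\p D}=0,\quad w(0)=u_1-u_2.
\end{equation*}
A Gronwall estimate on $[0,1]$ gives $\|w\|_{C([0,1];U)}\le C_1(R)\|u_1-u_2\|_U$, and parabolic smoothing of the Stokes semigroup, applied to this linear equation whose coefficients remain bounded in~$U$ along the trajectories by the dissipative estimates above, upgrades this to
\begin{equation*}
\|S(u_1)-S(u_2)\|_{H^3}=\|w(1)\|_{H^3}\le C_2(R)\,\|u_1-u_2\|_U .
\end{equation*}
Writing $V=U\cap H^3(D)$ with its $H^3$-norm, this reads $\|S(u_1)-S(u_2)\|_V\le C_2(R)\|u_1-u_2\|_U$. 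Since $D$ is bounded, the embedding $V\hookrightarrow U$ is compact, and the orthogonal projections $\mathsf{P}_N$ converge strongly to the identity on~$U$ because the union of their ranges is dense; a routine compactness argument then gives $\e_N:=\sup\{\|(I-\mathsf{P}_N)z\|_U:\|z\|_V\le1\}\to0$. Putting $\gamma_N(R):=C_2(R)\sup_{M\ge N}\e_M$, which is a decreasing null sequence, we get $\|(I-\mathsf{P}_N)(S(u_1)-S(u_2))\|_U\le\gamma_N(R)\|u_1-u_2\|_U$, which is~\eqref{1.5}.

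I expect the main obstacle to be the uniform-in-the-base-point parabolic regularization estimate $\|w(1)\|_{H^3}\le C_2(R)\|u_1-u_2\|_U$: one has to control the convection terms $\langle u_1,\nabla\rangle w$ and $\langle w,\nabla\rangle u_2$, whose coefficients are only $H^2$-regular, and bootstrap the regularity of~$w(t)$ from~$U$ to $H^3$ on $0<t\le1$ by combining the smoothing of the Stokes semigroup with the a~priori bounds of~{\rm(A)--(B)}. Once~{\rm(A)--(D)} and~\eqref{1.8} are verified, nothing further is needed: Theorems~\ref{main} and~\ref{mainty} apply with $X=U$ and yield the LDP for~$\zeta_k$ and~$\zzeta_k$ for every trajectory whose initial state is a $U$-valued random variable with range in~$\aA$.
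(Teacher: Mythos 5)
Your overall strategy matches the paper exactly: verify Hypotheses~(A)--(D) with the phase space changed from $H$ to $U$, identify the squeezing property~(C) as the only non-trivial item, and obtain it by proving a one-derivative regularization estimate $\|S(u_1)-S(u_2)\|_{H^3}\le C(R)\|u_1-u_2\|_{H^2}$ for $u_1,u_2\in B_U(R)$ and then invoking compactness of the embedding $H^3\hookrightarrow H^2$ together with the strong convergence of the orthogonal projections. This is precisely inequality~\eqref{2.7} in the paper, and the reduction from~\eqref{2.7} to~\eqref{1.5} is the same ``routine compactness argument'' in both treatments. Where you diverge from the paper is in the \emph{technique} proposed for proving the regularization estimate: you suggest a Duhamel formula with smoothing of the Stokes semigroup $e^{-\nu tL}$ applied to the linearized equation for $w=u_1-u_2$, whereas the paper proceeds by differentiating the equation in time and running time-weighted energy estimates on $\dot u$ (pairing~\eqref{2.12} with $2\dot u$ and with $2tL\dot u$, splitting the target into $\|w(1)\|_2$ and $\|\dot w(1)\|_1$ and recovering $\|w(1)\|_3$ via elliptic regularity from~\eqref{2.16}). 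Your route is also legitimate, but note the well-known subtlety you yourself flag: the convection terms $B(u_1,w)$, $B(w,u_2)$ are only controlled in $H^1$ when $u_i,w$ are bounded in $H^2$, and $e^{-\nu(1-s)L}:H^1\to H^3$ has norm of order $(1-s)^{-1}$, so the naive Duhamel iteration produces a divergent $\int_0^1(1-s)^{-1}\,\dd s$. A bootstrap through an intermediate regularity (or a time-weighted norm, which is essentially what the paper's factor $t$ in $t\|\dot u\|_1^2$ accomplishes) is needed; once this is acknowledged, both approaches complete the argument. Your treatment of~(A) and~(B) is more schematic than the paper's --- the paper derives~\eqref{1.3} from the inequalities $\|S(u)\|\le q\|u\|$ and $\|S(u)\|_2\le C\|u\|$, and derives~\eqref{1.4} in $U$ from the known dissipativity in $H$ via the smoothing of $S$ from $H$ to $U$ --- but what you sketch is consistent with this and would work.
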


For instance, one can take for an initial state any function $u_0\in U$ or a random variable~$u_0$ distributed according to the stationary measure. Furthermore, relations~\eqref{ldu} and~\eqref{ldl} hold for the  functional $f:U\to\R^6$ defined by 
$$
f(u)=\biggl(\frac12\int_D|u(x)|^2\dd x,\frac12\int_D|(\nabla\otimes u)(x)|^2\dd x,u^i(x_1)u^j(x_2), 1\le i,j\le 2\biggr), 
$$
where $u=(u^1,u^2)$, and $x_1,x_2\in D$ are given points.

\begin{proof}[Proof of Proposition~\ref{NS2}]
We shall check that Conditions (A)--(D) of Section~\ref{s1.1} (with~$H$ replaced by~$U$) are fulfilled. The validity of~(D) follows from the hypotheses of the proposition. The facts that, for any $T>0$, the time-$T$ shift along trajectories is uniformly Lipschitz continuous on  bounded subsets of~$U$ and is continuously differentiable in the Fr\'echet sense are proved in Chapter~7 of~\cite{BV1992}. Let us prove~\eqref{1.3}. It is well known that (see the proof of Theorem~6.2 of Chapter~1 in~\cite{BV1992})
$$
\|S(u)\|\le q\,\|u\|, \quad \|S(u)\|_2\le C\,\|u\|,
$$
where $q<1$ and $C>0$ are some constants, $u\in H$ in the first inequality, and $u\in B_H(1)$ in the second. Combining these two estimates we see 
that  for any $R>0$ we can find $n_1=n_1(R)\ge1$ such that
$$
\|S^{n+1}(u)\|_2\le Cq^n\|u\|\quad \mbox{for $u\in B_H(R)$, $n\ge n_1$}.
$$
This inequality immediately implies~\eqref{1.3}. 

\smallskip
We now establish the dissipativity property~(B). We know that this property holds in the space~$H$. Thus, we can find $\rho_1>0$ such that, for any $R>0$ and a sufficiently large integer $k_1(R)\ge1$, we have 
$$
\aA_k(B_U(R))\subset B_H(\rho_1)\quad\mbox{for $k\ge k_1(R)$}. 
$$
Since the mapping~$S$ is continuous from~$H$ to~$U$, it follows that
$$
\aA_{k+1}(B_U(R))\subset S(B_H(\rho_1))+\KK\quad
\mbox{for $k\ge k_1(R)$}.
$$
Choosing $\rho>0$ such  that $S(B_H(\rho_1))+\KK\subset B_U(\rho)$, we obtain~\eqref{1.4} with $H=U$ and $k_0(R)=k_1(R)+1$. 

\smallskip
It remains to prove the squeezing property~(C). Let $u_i(t)$, $i=1,2$, be two solutions of the homogeneous Navier--Stokes system, which we write as a non-local PDE in the space~$H$:
\begin{equation} \label{2.6}
\p_t u+\nu Lu+B(u)=0.
\end{equation}
Here $L=-\Pi\Delta$, $B(u)=B(u,u)$, $B(u,v)=\Pi(\langle u,\nabla\rangle v)$, and~$\Pi$ is the orthogonal projection in~$L^2(D,\R^2)$ onto~$H$. We wish to show that, if the initial conditions $u_{i0}=u_i(0)$ belong to the ball $B_U(R)$ and~$\{e_j\}$ is an orthonormal basis in~$U$, then
$$
\bigl\|(I-{\mathsf P}_N)\bigl(u_1(1)-u_2(1)\bigr)\bigr\|_2
\le \gamma_N(R)\|u_{10}-u_{20}\|_2,
$$
where $\gamma_N(R)$ depends only on the basis~$\{e_j\}$ and goes to zero as $N\to\infty$. A simple compactness argument implies that this inequality will hold if we prove that
\begin{equation} \label{2.7} 
\|u_1(1)-u_2(1)\|_3\le C(R)\|u_{10}-u_{20}\|_2
\quad\mbox{for $u_{10},u_{20}\in B_U(R)$}. 
\end{equation}
The proof of this inequality is carried out by standard methods (e.g., see~\cite{BV1992}), and we confine ourselves to outlining the main steps. We shall denote by~$C_i(R)$ unessential positive constants depending only on~$R$. 

\medskip
{\it Step~1}. 
It suffices to prove that
\begin{align}
\|u_1(1)-u_2(1)\|_2&\le C_1(R)\,\|u_{10}-u_{20}\|_2,\label{2.9}\\
\|\dot u_1(1)-\dot u_2(1)\|_1&\le C_2(R)\,\|u_{10}-u_{20}\|_2
\label{2.10}
\end{align}
for $u_{10},u_{20}\in B_U(R)$, where $\dot v=\p_t v$. Set $u=u_1-u_2$ and note  that ~\eqref{2.6} implies
$$
\nu Lu(1)=-\dot u(1)-B\bigl(u_1(1),u(1)\bigr)-B\bigl(u(1),u_2(1)\bigr).
$$
Since $u_i(1)$, $i=1,2$ are bounded in~$H^2$ (see Theorem~6.2 in~\cite{BV1992}) and the bilinear mapping $(u,v)\mapsto B(u,v)$ is continuous from~$U$ to~$H^1$, we see that
$$
\nu\,\|Lu(1)\|_1\le \|\dot u(1)\|_1+C_3(R)\,\|u(1)\|_2. 
$$
Recalling that $L^{-1}$ is continuous from~$H_0^1\cap H$ to~$H^3$ and using inequalities~\eqref{2.9} and~\eqref{2.10}, we obtain the required estimate~\eqref{2.7}. 

\smallskip
{\it Step~2}. To prove~\eqref{2.9} and~\eqref{2.10}, we first show that 
\begin{equation} \label{2.11}
\sup_{0\le t\le1}\biggl(\|\dot u(t)\|^2+\int_0^t\|\dot u(s)\|_1^2\dd s\biggr)
\le C_4(R)\,\|u_{10}-u_{20}\|_2.
\end{equation}
Differentiating~\eqref{2.6} in time, we derive the following equation for~$\dot u=\dot u_1-\dot u_2$:
\begin{equation} \label{2.12}
\p_t\dot u+\nu L\dot u+B(\dot u_1,u)+B(u_1,\dot u)+B(\dot u,u_2)+B(u,\dot u_2)=0. 
\end{equation}
Taking the $L^2$-scalar product with $2\dot u$ and performing some standard transformations, we obtain
$$
\p_t\|\dot u\|^2+\nu\|\dot u\|_1^2\le C_5\|u_2\|_1^2\|\dot u\|^2
+C_5\|u\|_1\|u\|\,\bigl(\|\dot u_1\|_1\|\dot u_1\|+\|\dot u_2\|_1\|\dot u_2\|\bigr).
$$
Applying the Gronwall and Cauchy--Schwarz inequalities and using the fact that~$\dot u_i$ belong to a bounded set in $L^\infty(0,1;H)\cap L^2(0,1;H^1)$, we derive
\begin{equation} \label{2.13}
\|\dot u(t)\|^2+\nu\int_0^t\|\dot u(s)\|_1^2\dd s
\le C_6(R)\,\Bigl(\|\dot u(0)\|^2+\|u\|_{L^\infty(J_t;H)}\|u\|_{L^2(J_t;H^1)}\Bigr),
\end{equation}
where $J_t=(0,t)$. It follows from~\eqref{2.6} that (cf.\ Step~1)
$$
\|\dot u(0)\|\le C_7(R)\,\|u(0)\|_2.
$$
Furthermore, it is well known that 
\begin{equation} \label{2.17}
\sup_{0\le t\le 1}\biggl(\|u(t)\|^2+t\,\|u(t)\|_1^2+\int_0^t\|u(s)\|_1^2\,\dd s\biggr)\le C_8(R)\,\|u(0)\|^2. 
\end{equation}
Combining these two inequalities with~\eqref{2.13}, we obtain~\eqref{2.11}. 

\smallskip
{\it Step~3}. 
We now derive~\eqref{2.9}. To this end, note that 
\begin{equation} \label{2.16}
\nu Lu(1)=g:=-\p_tu(1)-B(u_1(1),u(1))-B(u(1),u_2(1)). 
\end{equation}
Using the continuity properties of~$B$ and inequalities~\eqref{2.11} and~\eqref{2.17} one easily obtains 
$$
\|g\|\le \|\p_tu(1)\|+C_9(R)\,\|u(1)\|_1\le C_{10}(R)\,\|u(0)\|_2. 
$$
Combining this with~\eqref{2.16} and the continuity of~$L^{-1}$ from~$H$ to~$H^2$, we obtain~\eqref{2.9}. 

\smallskip
{\it Step~4}. 
It remains to prove~\eqref{2.10}. To this end, we take the $L^2$-scalar product of~\eqref{2.12} with $2tL\dot u$. After some standard transformations, we derive
\begin{equation} \label{2.017}
\p_t\bigl(t\|\dot u\|_1^2\bigr)-\|\dot u\|_1^2+2\nu t\,\|\dot u\|_2^2=q(t),
\end{equation}
where we set
$q(t)=-2t\bigl(B(\dot u_1,u)+B(u_1,\dot u)+B(\dot u,u_2)+B(u,\dot u_2),L\dot u\bigr)$. Well-known estimates for the bilinear term~$B$ imply that
\begin{equation} \label{2.018}
|q(t)|\le \nu t\,\|\dot u\|_2^2+C_{11}t\,q_1(t),
\end{equation}
where 
\begin{multline*}
q_1(t)=\|\dot u_1\|\,\|\dot u_1\|_1\|u\|_1\|u\|_2+\|u_1\|^2\|u_1\|_2^2\|\dot u\|^2\\
+\|u_2\|_1\|u_2\|_2\|\dot u\|\,\|\dot u\|_1+\|\dot u_2\|_1^2\|u\|\,\|u\|_2. 
\end{multline*}
Integrating~\eqref{2.017} in time and using~\eqref{2.018}, we obtain
\begin{equation} \label{2.019}
t\,\|\dot u\|_1^2+\nu\int_0^ts\,\|\dot u\|_2^2\,\dd s\le \int_0^t\|\dot u\|_1^2\,\dd s
+C_{11}\int_0^tsq_1(s)\,\dd s. 
\end{equation}
The first integral on the right-hand side can be estimated with the help of~\eqref{2.11}. We now bound each term of the second integral:
\begin{align*}
\int_0^ts\,\|\dot u_1\|\,\|\dot u_1\|_1\|u\|_1\|u\|_2\dd s
&\le C_{12}(R)\,\biggl(\int_0^t\|u(s)\|_2^2\,\dd s\biggr)^{1/2}
\sup_{0\le s\le t}\|u(s)\|_1,\\
\int_0^ts\,\|u_1\|^2\|u_1\|_2^2\|\dot u\|^2\dd s
&\le C_{13}(R)\,\biggl(\int_0^t\|\dot u(s)\|^2\,\dd s\biggr)^{1/2},\\
\int_0^ts\,\|u_2\|_1\|u_2\|_2\|\dot u\|\,\|\dot u\|_1\dd s
&\le C_{14}(R)\,\biggl(\int_0^t\|\dot u(s)\|_1^2\,\dd s\biggr)^{1/2}
\sup_{0\le s\le t}\|\dot u(s)\|,\\
\int_0^ts\,\|\dot u_2\|_1^2\|u\|\,\|u\|_2\dd s
&\le C_{15}(R)\,\biggl(\int_0^t\|u(s)\|_2^2\,\dd s\biggr)^{1/2}
\sup_{0\le s\le t}\|u(s)\|,
\end{align*}
where $0\le t\le 1$, and we used the fact that the functions~$u_i$ and~$\dot u_i$ belong to bounded sets in the spaces $L^\infty(J_1,H^2)$ and $L^\infty(J_1,L^2)\cap L^2(J_1,H^1)$, respectively. 
On the other hand, it is well known that
\begin{equation*}
\sup_{0\le t\le 1}\biggl(\|u(t)\|_1^2+\int_0^t\|u(s)\|_2^2\,\dd s\biggr)
\le C_{17}(R)\,\|u(0)\|_1^2. 
\end{equation*}
Combining these estimates with~\eqref{2.019}, \eqref{2.17}, and~\eqref{2.11}, we obtain~\eqref{2.10}. This completes the proof of Proposition~\ref{NS2}. 
\end{proof}

\subsubsection*{Complex Ginzburg--Landau equation}
Let us consider a complex Ginzburg--Landau (CGL) equation perturbed by a random kick force:
\begin{equation}
\p_t u-(\nu+i)\Delta u+ia|u|^2u=\eta(t,x), \quad x\in D, \quad u\big|_{\p D}=0.\label{2.26}
\end{equation}
Here $a>0$ is a parameter, $D\subset\R^3$ is a bounded domain with $C^2$-smooth boundary~$\p D$, $u=u(t,x)$ is a complex-valued unknown function, and~$\eta$ is an external force of the form~\eqref{E:4}, where $\{\eta_k\}$ is a sequence of i.i.d.\ random variables in the complex space~$H_0^1(D)$. 
It is well known that  the Cauchy problem for~\eqref{2.26} is well posed in~$H_0^1(D)$ (see~\cite{KS-jpa2004}),  that is, for any $u_0\in H_0^1(D)$, problem~\eqref{2.26} has a unique solution such that
\begin{equation} \label{2.28}
u(0,x)=u_0(x). 
\end{equation}
Let us assume that the random kicks entering~\eqref{2.26} have the form
$$
\eta_k(x)=\sum_{j=1}^\infty b_j(\xi_{jk}^1+i\xi_{jk}^2)e_j,
$$
where $\{e_j\}$ is an orthonormal basis in~$H_0^1(D)$ consisting of the real eigenfunctions of the Dirichlet Laplacian, $\{b_j\}\subset \R$ is a sequence satisfying~\eqref{1.7}, and $\xi_{jk}^i$ are independent real-valued random variables whose laws possess the properties stated in Condition~(D) of Section~\ref{1.1}. We denote by~$\aA\subset H_0^1(D)$ the set of attainability from zero. The   following result is an analogue of Propositions~\ref{NS1} and~\ref{NS2} for the case of the CGL equation.

\begin{proposition} \label{CGL}
In addition to the above hypotheses, assume that $b_j\ne0$ for all $j\ge1$. Then the LDP holds for the  occupation measures~\eqref{E:1c} and \eqref{rpm3} of the trajectories whose initial state is  an $H_0^1$-valued random variable with range in~$\aA$.  \end{proposition}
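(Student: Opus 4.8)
The plan is to verify that Conditions~(A)--(D) of Section~\ref{s1.1} hold for the time-$1$ map~$S$ of the CGL equation~\eqref{2.26} acting on the phase space $H=H_0^1(D)$, and then invoke Theorems~\ref{main} and~\ref{mainty}. First, Condition~(D) is immediate from the stated hypotheses on the kicks, once one notes that the real and imaginary parts $\xi_{jk}^1,\xi_{jk}^2$ can be relabelled into a single family of scalar random variables indexed by a doubled index set, with the associated $b_j$'s still square-summable by~\eqref{1.7}. Second, for Condition~(A) one uses the well-posedness theory from~\cite{KS-jpa2004}: the flow map of~\eqref{2.26} is continuously Fr\'echet differentiable on $H_0^1(D)$ and Lipschitz on bounded sets, which gives~\eqref{1.2}; the stability estimate~\eqref{1.3} follows from the standard a priori bound $\|u(t)\|_1 \le e^{-ct}\|u_0\|_1 + C$ (or, more precisely, from a dissipation estimate showing $\|S^n(u_0)\|_1$ enters a fixed ball after finitely many steps, with the contraction factor $a<1$ coming from the $-\nu\Delta$ term dominating for large time outside that ball). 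Third, Condition~(B), dissipativity of the attainability sets, follows from the same a priori estimate in $H_0^1$ together with the compactness of the kick support~$\KK$: the absorbing ball for the deterministic flow, enlarged by the bounded set~$\KK$, yields the required $\rho$ and $k_0(R)$.

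The substantive point is Condition~(C), the squeezing property. Here one writes the difference $u=u_1-u_2$ of two solutions, which satisfies a linear equation with coefficients depending on $u_1,u_2$; the nonlinearity $ia|u|^2u$ contributes terms that are quadratic in $u_i$ times $u$ (and its conjugate). One then estimates the high-frequency part $\|(I-\mathsf{P}_N)u(1)\|_1$ by a parabolic smoothing/energy argument: projecting the equation onto the orthogonal complement of the first $N$ Dirichlet eigenfunctions, the linear dissipation acts with rate proportional to the $(N{+}1)$-st eigenvalue $\alpha_{N+1}\to\infty$, while the nonlinear and low-frequency coupling terms are controlled, on the bounded set $\aA$ (or $B_H(R)$), using Sobolev embeddings valid in dimension $3$ — this is where the choice $D\subset\R^3$ matters, since $H_0^1(D)\hookrightarrow L^6(D)$ makes the cubic term manageable. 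As in the proof of Proposition~\ref{NS2}, it is cleaner to first establish an unconditional smoothing estimate of the form $\|u(1)\|_{1+\theta}\le C(R)\|u(0)\|_1$ for some $\theta>0$, and then deduce~\eqref{1.5} with $\gamma_N(R)\to0$ by a compactness argument using $\|(I-\mathsf{P}_N)\|_{L(H^{1+\theta},H^1)}\to0$.

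The main obstacle I expect is precisely the squeezing estimate in three space dimensions: the cubic nonlinearity is critical (or nearly so) with respect to the $H^1$ energy in $d=3$, so one must be careful that the a priori bounds on the $\aA$-trajectories are strong enough — typically one needs, in addition to the $H^1$ bound, a bound on $\int_0^1\|u_i(s)\|_2^2\,ds$ coming from parabolic regularity, exactly as in Step~2--Step~4 of the proof of Proposition~\ref{NS2}. Once the relevant a priori estimates for the CGL flow are in hand (they are available in~\cite{KS-jpa2004}, or can be reproduced by the same energy method), the higher-regularity bound and the squeezing property follow by the same scheme of differentiating in time, testing against $L\dot u$ with a weight $t$, and summing the Sobolev estimates. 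The remaining steps are then routine, and the conclusion of Proposition~\ref{CGL} follows by applying Theorems~\ref{main} and~\ref{mainty} to the system~\eqref{1.1} with this~$S$, restricted to $\aA=\aA(\{0\})$.
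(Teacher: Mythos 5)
Your overall strategy is the same as the paper's: verify Conditions~(A)--(D) for the time-$1$ map of the CGL equation on $H=H_0^1(D)$ and then invoke Theorems~\ref{main} and~\ref{mainty}. For~(A), (B) and~(D) your outline is in substantive agreement with the paper (the paper gets the pure exponential decay needed for~\eqref{1.3} from the Lyapunov functional $\HH_1(u)=\int(\frac12|\nabla u|^2+\frac a4|u|^4)$ and obtains~(B) by first dissipating in $L^2$ and then boosting to $H_0^1$ via the instantaneous smoothing estimate~\eqref{2.035}; these are consistent with what you sketch).

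Where you genuinely diverge is Condition~(C). After correctly identifying the direct mechanism --- projecting the difference equation onto $\mathsf Q_N H$ and using that the Dirichlet Laplacian gives dissipation at rate $\sim\alpha_{N+1}$ --- you then recommend abandoning it in favour of the NS2-style route: prove a higher-order difference estimate $\|u_1(1)-u_2(1)\|_{1+\theta}\le C(R)\|u_{10}-u_{20}\|_1$ and deduce squeezing by the compactness of $H^{1+\theta}\hookrightarrow H^1$. The paper does the opposite and takes the first route: setting $w=\mathsf Q_N(u_1-u_2)$, a single energy estimate in $H^1$ combined with the spectral inequality $\|\Delta w\|^2\ge\alpha_{N+1}\|w\|_1^2$ on $\mathsf Q_N H$ and Gronwall gives $\gamma_N(R)^2=\exp(-\nu\alpha_{N+1}+C(R))\to0$ directly, with no need for any $H^{1+\theta}$ control. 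This is the opposite of your assessment that the compactness route is ``cleaner'': in Proposition~\ref{NS2} the compactness argument was forced on the authors because the orthonormal basis in $U=H^2\cap H_0^1\cap H$ was arbitrary, whereas here Condition~(D) for CGL explicitly fixes $\{e_j\}$ to be the Dirichlet eigenbasis of $H_0^1$, which is precisely what makes the direct spectral-gap argument available and shorter. Your alternative route is not wrong --- the cubic nonlinearity does map $H^1\to L^2$ in $d=3$ via $H_0^1\hookrightarrow L^6$, and with time-weighted parabolic estimates one could presumably push the difference into $H^2$ --- but it duplicates work that the eigenbasis structure allows one to skip entirely, and it is the harder of the two options in this setting.
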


\begin{proof}
We endow the space~$H_0^1=H_0^1(D)$ with the scalar product 
$$
(u_1,u_2)_1=\Re\int_D\nabla u_1(x)\cdot \overline{\nabla u_2(x)}\,\dd x
$$
and regard it as a real Hilbert space. Let $S:H_0^1\to H_0^1$ be the time-$1$ shift along trajectories of the problem~\eqref{2.26} with $\eta\equiv0$. The required results will be established if we check that the stochastic system~\eqref{1.1} considered in the space $H=H_0^1$ possesses properties (A)--(D). Regularity of the mapping~$S$ and its Lipschitz continuity on bounded subsets are standard, and ~(D) is satisfied in view of the hypotheses of the proposition. Thus, it remains to check \eqref{1.3}--\eqref{1.5}. 

\smallskip
{\it Step~1}. 
Let us introduce the following continuous functionals on~$H_0^1$:
$$
\HH_0(u)=\frac12\int_D|u(x)|^2\dd x, \quad 
\HH_1(u)=\int_D\Bigl(\frac12|\nabla u(x)|^2+\frac{a}{4}|u(x)|^4\Bigr)\,\dd x.
$$
It is well known\,\footnote{\,For instance, see Section~2.2 in~\cite{KS-jpa2004} for the more complicated case of a white noise force.} that if~$u(t)$ is a solution of ~\eqref{2.26} and ~$\eta$ is a locally integrable function of time with range in~$H_0^1$, then
\begin{align}
\frac{\dd}{\dd t}\HH_0(u)&=-\nu\,\|\nabla u\|^2+(u,\eta),
\label{2.29}\\
\frac{\dd}{\dd t}\HH_1(u)
&=-\nu\,\|\Delta u\|^2-2a\nu(|u|^2,|\nabla u|^2)+a\nu(u^2,(\nabla u)^2)
+(-\Delta u+a|u|^2u,\eta), 
\label{2.30}
\end{align}
where $(\cdot,\cdot)$ and $\|\cdot\|$ stand for the real $L^2$-scalar product and the corresponding norm:
$$
(u_1,u_2)=\Re\int_Du_1(x)\,\bar u_2(x)\,\dd x, \quad \|u\|^2=(u,u). 
$$ 
Using the inequalities
$$
\|\Delta u\|^2\ge\alpha_1\|\nabla u\|^2, \quad 
|(u^2,(\nabla u)^2)|\le (|u|^2,|\nabla u|^2), \quad
(|u|^2,|\nabla u|^2)\ge c\int_D|u|^4\dd x, 
$$
where $\alpha_1>0$ is the first eigenvalue of the Dirichlet Laplacian,
we derive from~\eqref{2.30} the inequality
\begin{equation} \label{2.031}
\frac{\dd}{\dd t}\HH_1(u(t))
\le-\delta\HH_1(u(t)) -\frac{\nu}{2}\|\Delta u\|^2+(-\Delta u+a|u|^2u,\eta),
\end{equation}
where $\delta>0$ depends only on~$a$, $\nu$, and~$\alpha_1$. 
Taking $\eta\equiv0$ and applying the Gronwall inequality, we see that 
$$
\HH_1(u(t))\le e^{-\delta t}\HH_1(u(0)), \quad t\ge0. 
$$
It follows that if $u_0\in B_{H_0^1}(R)$, then
$$
\|S^n(u_0)\|_1\le\bigl(2\HH_1(u(n))\bigr)^{1/2}
\le\bigl(2e^{-\delta n}\HH_1(u_0)\bigr)^{1/2}\le C_1R\,e^{-\delta n/2}\|u_0\|_1,
$$
where we used the inequality 
\begin{equation} \label{2.032}
\HH_1(v)\le C\|v\|_1^4,\quad v\in H_0^1,
\end{equation}
following immediately from the continuity of the embedding of~$H_0^1\subset L^4$. The above estimate for $\|S^n(u_0)\|_1$ implies~\eqref{1.3}. 

\smallskip
{\it Step~2}. 
We now prove the dissipativity property~\eqref{1.4}. To this end, we first establish a bound for the $L^2$-norm. It follows from~\eqref{2.29} that, for any $\e>0$, the function $\varphi_\e(t)=(\HH_0(u(t))+\e)^{1/2}$ satisfies the inequality
$$
\varphi_\e'(t)\le -\nu\alpha_1\varphi_\e(t)+\tfrac{1}{\sqrt{2}}\|\eta(t)\|+\nu\alpha_1\sqrt{\e}.
$$
Applying the Gronwall inequality and passing to the limit  $\e\to0$, we obtain
\begin{equation} \label{2.033}
\|u(t)\|\le e^{-\nu\alpha_1 t}\|u_0\|
+\int_0^te^{-\nu\alpha_1(t-s)}\|\eta(s)\|\,\dd s. 
\end{equation}
Now note that if $\int_J\|\eta(s)\|\dd s\le b_0$ for any interval $J\subset\R_+$ of length~$1$, then
$$
\int_0^te^{-\nu\alpha_1(t-s)}\|\eta(s)\|\,\dd s\le\frac{b_0}{1-e^{-\nu\alpha_1}}
\quad\mbox{for $t\ge0$}.
$$
Combining this with~\eqref{2.033}, we see that
\begin{equation} \label{2.034}
\|u(k)\|\le e^{-\nu\alpha_1 k}\|u_0\|+\frac{b_0}{1-e^{-\nu\alpha_1}}, \quad
k\ge0. 
\end{equation}
This inequality, established in the case of locally time-integrable functions $\eta(t)$, remains true for kick forces of the form~\eqref{E:4} with $L^2$ bounded functions~$\eta_k$. In particular, \eqref{1.4} holds with $H=L^2(D)$.

\smallskip
We now use the regularising property of the homogeneous CGL equation to prove~\eqref{1.4} with $H=H_0^1$. Namely, if we show that the mapping $S:u_0\mapsto u(1)$ from~$L^2$ to~$H_0^1$ is bounded on bounded subsets, then~\eqref{2.034} will obviously imply the existence of a universal constant $\rho>0$  satisfying~\eqref{1.4} with $H=H_0^1$. To prove the boundedness of~$S$, let us fix a solution $u$ of~\eqref{2.26}, define a function $\psi(t)=t\sqrt{\HH_1(u(t))}$, and calculate its derivative. It follows from~\eqref{2.30} and~\eqref{2.032} that\footnote{\,A rigorous derivation of~\eqref{2.035} can be carried out by the simple argument used to establish~\eqref{2.033}.}
$$
\psi'(t)=\sqrt{\HH_1}+\frac{t}{\sqrt{\HH_1}}\frac{\dd}{\dd t}\HH_1
\le \sqrt{C}\,\|u(t)\|_1^2.
$$
Furthermore, integrating relation~\eqref{2.29} with $\eta\equiv0$, we see that
$$
\|u(t)\|^2+2\nu\int_0^t\|u(s)\|_1^2\dd s\le \|u_0\|^2.
$$
Combining these two relations, we obtain
\begin{equation} \label{2.035}
\|u(t)\|_1\le\sqrt{2\HH_1(u(t))}=\frac{\sqrt2}{t}\psi(t)\le\frac{\sqrt{2C}}{t}\int_0^t\|u\|_1^2\dd s\le \frac{C_2}{\nu t}\|u_0\|^2. 
\end{equation}
The boundedness of~$S$ from~$L^2$ to~$H_0^1$ is a straightforward consequence of this inequality. 

\smallskip
{\it Step~3}. 
It remains to establish the squeezing property~\eqref{1.5}, in which~${\mathsf P}_N$ is the orthogonal projection in~$H_0^1$ (endowed with the scalar product~$(\cdot,\cdot)_1$) to the vector span of~$\{e_j,ie_j,j=1,\dots,N\}$. Let us set ${\mathsf Q}_N=I-{\mathsf P}_N$. By hypothesis, we have $\|e_j\|_1=1$, hence it follows that
$$
\|e_j\|=\frac{1}{\sqrt{\alpha_j}},\quad j\ge1,
$$
where $\alpha_j$ is the eigenvalue of the Dirichlet Laplacian corresponding to the eigenfunction~$e_j$. Using this relation, it is straightforward to check that $\{\sqrt{\alpha_j}\,e_j\}$ is an orthonormal basis in~$L^2$ and that the norm of~${\mathsf Q}_N$ regarded as an operator in~$L^2$ is equal to~$1$. 

Now let $u_1, u_2$ be two solutions of~\eqref{2.26} corresponding to initial data $u_{10},u_{20}\in B_{H_0^1}(R)$. Applying~${\mathsf Q}_N$ to~\eqref{2.26} and setting $w={\mathsf Q}_N(u_1-u_2)$, we derive the equation
$$
\dot w-(\nu+i)\Delta w+ia{\mathsf Q}_N\bigl(|u_1|^2u_1-|u_2|^2u_2\bigr)=0. 
$$
It follows that
\begin{align}
\p_t\|w\|_1^2&=2\Re\int_D\nabla\dot w\cdot\nabla\bar w\,\dd x
=-2\Re\int_D\dot w\,\Delta\bar w\,\dd x\notag\\
&=-2\bigl((\nu+i)\Delta w-ia{\mathsf Q}_N(|u_1|^2u_1-|u_2|^2u_2),\Delta\bar w\bigr)\notag\\
&\le -2\nu\,\|\Delta w\|^2+a\,\bigl\||u_1|^2u_1-|u_2|^2u_2\bigr\|\,\|\Delta w\|,
\label{2.036}
\end{align}
where we used the fact that the norm of ${\mathsf Q}_N$ is equal to~$1$. 
Using the H\"older inequality and the continuity of the embedding $H_0^1\subset L^6$, we derive
$$
\bigl\||u_1|^2u_1-|u_2|^2u_2\bigr\|
\le C_3\bigl(\|u_1\|_1+\|u_2\|_1\bigr)^2\,\|w\|_1.
$$
Substituting this into~\eqref{2.036} and using the Poincar\'e inequality 
$\|\Delta w\|^2\ge \alpha_N\|w\|_1^2$, we obtain
\begin{equation} \label{2.037}
\p_t\|w\|_1^2\le 
-\bigl(\nu\alpha_N-C_4(\|u_1\|_1+\|u_2\|_1)^4\bigr)\,\|w\|_1^2.
\end{equation}
Since $u_{10},u_{20}\in B_{H_0^1}(R)$ and the resolving operator for the CGL is bounded on bounded subsets, we can find $C_5(R)$ such that
$$
\|u_i(t)\|_1\le C_5(R)\quad\mbox{for $0\le t\le 1$, $i=1,2$}.
$$
Combining this with~\eqref{2.037} and the Gronwall inequality, we derive
\begin{align*}
\|w(t)\|_1^2
&\le \exp\biggl(-\nu\alpha_Nt
+C_4\int_0^t(\|u_1\|_1+\|u_2\|_1)^4\dd s\biggr)\,\|w(0)\|_1^2\\
&\le \exp\bigl(-\nu\alpha_Nt+C_6(R)\,t\bigr)\,\|w(0)\|_1^2. 
\end{align*}
Since ${\mathsf Q}_N$ is an orthogonal projection in~$H_0^1$, we have $\|w(0)\|_1\le \|u_{10}-u_{20}\|_1$. Substituting this into the above estimate and taking $t=1$, we obtain
$$
\|u_1(1)-u_2(1)\|_1\le \gamma_N(R)\,\|u_{10}-u_{20}\|_1, \quad 
\gamma_N^2(R)=\exp\bigl(-\nu\alpha_N+C_6(R)\bigr). 
$$
This completes the proof of~\eqref{1.5} and  Proposition~\ref{CGL} follows.
\end{proof}

\subsection{Scheme of the proof of Theorem~\ref{main}}
\label{s2.3} 
Along with~$\zeta_k$, let us consider ``shifted'' occupation measures defined as
$$
\hat \zeta_k=\frac1k\sum_{n=1}^k\delta_{u_n}. 
$$
The sequences~$\{\zeta_k\}$ and~$\{\hat\zeta_k\}$ are exponentially equivalent (see Lemma~\ref{l5.2}), and therefore, by Theorem~4.2.13 of~\cite{ADOZ00}, it suffices to prove the LDP for~$\hat\zeta_k$. The proof of this property is based on an abstract result established by Kifer~\cite{kifer-1990}. For the reader's convenience, its statement is recalled in the Appendix (see Theorem~\ref{T:Kif}). We shall prove that the following two properties hold.

\begin{description}
\item[\bf  Property 1: The existence of a limit.]  For any $V\in C(\aA)$, the limit 
\begin{align}\label{E:3.1*}
Q(V)=\lim_{k\to+\ty} \frac{1}{k}
\log\E\,\exp\biggl(\,\sum_{n=1}^kV(u_n)\biggr).
\end{align} 
exists and does not depend on the initial condition~$u_0$.
\end{description}
Let us denote by~$I:\MM(\aA)\to\R_+$ the Legendre transform of~$Q(V)$; see~\eqref{E:H4}. It is well known that 
$$
Q(V)=\sup_{\sigma\in\PP(\aA)}\bigl(\langle V,\sigma\rangle-I(\sigma)\bigr);
$$
see Lemma~2.2 in~\cite{BD99} and Theorem~2.2.15 in~\cite{DS1989}. In view of the compactness of~$\PP(\aA)$, for any $V\in C(\aA)$ the supremum in the above relation is attained at some point~$\sigma_V\in\PP(\aA)$. Any such point is called an {\it equilibrium state\/}.  

\begin{description}
\item[\bf Property 2: Uniqueness of the equilibrium state.]
There is  a dense vector space  $\VV\subset C(\aA)$ such that, for any $V\in\VV$, there exists unique ~$\sigma_V$ satisfying:
\begin{align}\label{E:3.1**}
Q(V)=  \lag V, \sigma_V\rag-I(\sigma_V).
\end{align} 
\end{description}

According to Kifer's theorem, the first of the above properties implies the LD upper bound for~$\hat\zeta_k$, while the second is sufficient for the LD lower bound. The proofs of these two  properties are related to the large-time behaviour of a generalised Markov semigroup associated with~$u_k.$ More precisely, given a function $V\in C(\aA)$, we consider the semigroup
\begin{equation} \label{2.18}
\PPPP_k^V f(u):= 
\E_{u}f(u_k)\exp \biggl(\,\sum_{n=1}^kV(u_n)\biggr), 
\quad f\in C(\aA),
\end{equation}
where the subscript~$u$ means that we consider the trajectory of~\eqref{1.1} starting  from~$u\in H$.  The dual semigroup 
is denoted by 
$\PPPP_k^{V*} :\PP(\aA)\to \PP(\aA)$. We construct explicitly a dense vector space   $\VV\subset C(\aA)$ such that, for any $V\in\VV$,  the semigroup 
$\PPPP_k^V$ is uniformly Feller and uniformly irreducible (see Section~\ref{s3} for the definition of these concepts). Then, by an abstract result proved in Section~\ref{s3}, there is a number $\lambda_V>0$, a function $h_V\in C_+(\aA)$, and a measure $\mu_V\in\PP(\aA)$ satisfying
\begin{equation} \label{2.15}
\PPPP_1^V h_V=\la_V h_V, \quad 
\PPPP_1^{V*} \mu_V=\la_V\mu_V,
\end{equation}
such that  for any $f\in  C(\aA)$ and $\nu\in \PP(\aA)$ we have
\begin{align}
\la_V^{-k}\PPPP_k^V f&\to \lag f,\mu_V\rag h_V \quad \text{in $C(\aA)$ as $k\to+\ty$},\label{c1}\\
\la_V^{-k}\PPPP_k^{V*} \nu& \rightharpoonup \lag h_V,\nu\rag \mu_V\quad \text{in~$\PP(\aA)$ as $k\to+\ty$}.\label{c2}
\end{align} 
Taking $f=1$ in~\eqref{c1}, one gets immediately the existence of the  limit~\eqref{E:3.1*} for $V\in\VV$ and any initial function~$u_0$ whose law is supported by~$\aA$. Then, by a simple approximation argument, we prove the existence of the limit  for any $V\in C(\aA)$.
 
To establish the uniqueness of $\sigma_V\in \PP(\aA)$ satisfying~\eqref{E:3.1**}, we first show that any equilibrium state~$\sigma_V$ is a stationary measure for the following Markov semigroup:
\begin{equation} \label{2.22}
\SSS^V_k g:=\la_V^{-k} h_V^{-1} \PPPP_k^V (gh_V), 
\quad g\in C(\aA).
\end{equation}
We then deduce the uniqueness of stationary measure for~$\SSS^V_k$ from convergence~\eqref{c2}, showing that $\sigma_V(\dd u)= h_V(u) \mu_V (\dd u)$. 

\smallskip
The crucial point in the realisation of the above scheme is the verification of the uniform Feller property for the semigroup~$\{\PPPP_k^V\}$. 
This verification is based  on the Lyapunov--Schmidt reduction and is carried out in Section~\ref{sec-unif}.  

\subsection{Scheme of the proof of Theorem~\ref{mainty}}
\label{s2.4} 
Let $p_m:\HHH\to H^m$ be the projection that maps a sequence $(u_j,j\in\Z_+)$ to the vector $(u_j,0\le j\le m-1)$. It is straightforward to check that if $\{u_k,k\ge0\}$ is a trajectory for~\eqref{1.1}, then the image of~$\zzeta_k$ (see~\eqref{rpm3}) under~$p_m$ coincides with the random probability measure
\begin{equation} \label{rpm2}
\zeta_k^m=\frac1k\sum_{n=0}^{k-1}\delta_{\uu_n^m},\quad k\ge1,
\end{equation}
where $\uu^m_n=(u_n,\ldots,u_{n+m-1})$. It follows from the Dawson--G\"artner theorem (see Theorem~\ref{t5.2} in the Appendix) that  to prove Theorem~\ref{mainty} it suffices to show that  for any integer~$m\ge1$, the LDP holds for~$\zeta_k^m$ with a good rate function~$I_m:\PP(H^m)\to[0,+\infty]$. The 
proof of this fact is very similar to the proof of Theorem \ref{main}  and the argument is outlined in Section \ref{s6}. 
To formulate the result precisely, let~$\aA^{(m)}$ be the set of vectors 
$(u_1,\dots,u_m)\in H^m$ such that $u_1\in\aA$ and $u_k=S(u_{k-1})+\eta_k$ for $2\le k\le m$, where~$\eta_k\in\KK$. 
Note that if a trajectory~$\{u_k\}$ for~\eqref{1.1} is such that~$u_0$ is an $\aA$-valued random variable, then the measures~$\zeta_k^m$ are concentrated on~$\aA^{(m)}$. In Section \ref{s6} we prove 

\begin{theorem}\label{mainm} 
Under the conditions of Theorem~\ref{main}, let~$u_0$ be a random variable in~$H$ whose law is supported by~$\aA$. Then the family~$\{\zeta_k^m,k\ge1\}$ regarded as a sequence of random probability measures on~$\aA^{(m)}$ satisfies the LDP with a good rate function~$I_m:\PP(\aA^{(m)})\to[0,+\infty]$. Moreover, $I_m$~can be written as
\begin{align}\label{E:III^m} 
I_m(\sigma)=\sup_{V\in C(\aA^{(m)})} 
\bigl(\langle V,\sigma\rangle-Q_m(V)\bigr),
\quad \sigma\in\PP(\aA^{(m)}),
\end{align}
where $Q_m:\aA^{(m)}\to\R$ is a $1$-Lipschitz convex function such that $Q_m(C)=C$ for any $C\in\R$. 
\end{theorem}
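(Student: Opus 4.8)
The strategy is to repeat the proof of Theorem~\ref{main} for the ``lifted'' chain $\{\uu_n^m\}$, $\uu_n^m=(u_n,\dots,u_{n+m-1})$. This is a homogeneous Markov chain on $H^m$: its transition sends $(w_1,\dots,w_m)$ to $(w_2,\dots,w_m,\,S(w_m)+\eta)$ with $\eta\sim\DD(\eta_1)$. Since $\aA$ is invariant under~\eqref{1.1} and $\KK$ is compact, $\aA^{(m)}$ is the continuous image of $\aA\times\KK^{m-1}$, hence compact, and one checks directly that it is invariant under the lifted chain; moreover, if $\DD(u_0)$ is carried by $\aA$ then $\DD(\uu_0^m)$ is carried by $\aA^{(m)}$. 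As in Section~\ref{s2.3}, we first pass to the shifted occupation measures $\hat\zeta_k^m=\frac1k\sum_{n=1}^k\delta_{\uu_n^m}$, which differ from $\zeta_k^m$ only by the endpoint terms $k^{-1}\delta_{\uu_0^m}$ and $k^{-1}\delta_{\uu_k^m}$ and are therefore exponentially equivalent to $\zeta_k^m$ in $\PP(\aA^{(m)})$ by the argument of Lemma~\ref{l5.2}; by Theorem~4.2.13 of~\cite{ADOZ00} it suffices to prove the LDP for $\hat\zeta_k^m$. For this we use Kifer's criterion (Theorem~\ref{T:Kif}) applied to the generalised Markov semigroup
$$
\PPPP_k^{m,V}f(\uu):=\E_{\uu}\,f(\uu_k^m)\exp\Bigl(\,\sum_{n=1}^k V(\uu_n^m)\Bigr),\qquad f,V\in C(\aA^{(m)}),
$$
and its dual $(\PPPP_k^{m,V})^{*}$ on $\PP(\aA^{(m)})$.

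The heart of the matter is to show, exactly as in Sections~\ref{s3} and~\ref{sec-unif}, that for $V$ in a dense vector space $\VV_m\subset C(\aA^{(m)})$ (constructed as in Section~\ref{sec-unif}) the semigroup $\{\PPPP_k^{m,V}\}$ is uniformly Feller and uniformly irreducible. Uniform irreducibility is inherited from the original chain: over $m$ consecutive steps one can steer $\uu_k^m$ into any prescribed ball of $\aA^{(m)}$ with probability bounded below uniformly in the starting point (this uses the irreducibility of~\eqref{1.1}, cf.\ the remark closing Section~\ref{s1.1} and Section~\ref{s3}), and the weight $\exp(\sum_n V(\uu_n^m))$ changes this bound only by a multiplicative constant since $V$ is bounded on the compact set $\aA^{(m)}$. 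Uniform Feller is checked by the Lyapunov--Schmidt reduction of Section~\ref{sec-unif}, using for $H^m$ the orthonormal basis obtained by placing each $e_j$ in each of the $m$ coordinates: the one new feature is that the noise of the lifted chain acts on the last coordinate alone, but after $m$ shifts every coordinate of $\uu_n^m$ has been refreshed by a kick, and combined with the squeezing property~(C) this suffices to run the finite-dimensional reduction. Granting these properties, the abstract result of Section~\ref{s3} provides $\la_V>0$, $h_V\in C_+(\aA^{(m)})$ and $\mu_V\in\PP(\aA^{(m)})$ with $\PPPP_1^{m,V}h_V=\la_V h_V$, $(\PPPP_1^{m,V})^{*}\mu_V=\la_V\mu_V$, and the convergences~\eqref{c1}--\eqref{c2} with $\aA$ replaced by $\aA^{(m)}$.

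From here the two hypotheses of Kifer's theorem follow as in Sections~\ref{s2.3} and~\ref{s4}. Taking $f\equiv1$ in the analogue of~\eqref{c1} shows that the limit
$$
Q_m(V)=\lim_{k\to\infty}\frac1k\log\E\exp\Bigl(\,\sum_{n=1}^k V(\uu_n^m)\Bigr)=\log\la_V
$$
exists for $V\in\VV_m$ and is independent of the initial state (for $\DD(u_0)$ carried by $\aA$); a density argument extends the existence of the limit to all $V\in C(\aA^{(m)})$. Uniqueness of the equilibrium state for $V\in\VV_m$ is obtained, as in Section~\ref{s4}, by showing that any equilibrium state is stationary for the Markov semigroup $g\mapsto\la_V^{-k}h_V^{-1}\PPPP_k^{m,V}(gh_V)$ and that this semigroup has the unique stationary measure $h_V\,\mu_V$, by~\eqref{c2}. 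Kifer's theorem then yields the LDP for $\hat\zeta_k^m$, hence for $\zeta_k^m$, with $I_m$ the Legendre transform $I_m(\sigma)=\sup_{V\in C(\aA^{(m)})}(\langle V,\sigma\rangle-Q_m(V))$; here $Q_m(C)=C$ for $C\in\R$ because $\sum_{n=1}^kC=kC$, convexity of $Q_m$ follows from H\"older's inequality applied under the expectation, and the $1$-Lipschitz bound from $|Q_m(V_1)-Q_m(V_2)|\le\|V_1-V_2\|_\infty$. Lower semicontinuity of $I_m$ is automatic, and goodness follows from compactness of $\PP(\aA^{(m)})$. I expect the uniform Feller property of $\{\PPPP_k^{m,V}\}$ to be the only real obstacle: one must make sure the Lyapunov--Schmidt argument of Section~\ref{sec-unif} goes through although the lifted noise is carried by a single coordinate, which is precisely where the shift structure of $\uu_n^m$ enters.
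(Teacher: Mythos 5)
Your overall scheme matches the paper's: pass to the shifted occupation measures, apply Kifer's criterion to the generalised Markov semigroup of the lifted chain, verify uniform irreducibility and uniform Feller on a dense class $\VV_m$ of cylindrical functions, then obtain uniqueness of the equilibrium state exactly as in Section~\ref{s4}. The treatments of uniform irreducibility, of the $1$-Lipschitz convexity and normalisation of $Q_m$, and of goodness of $I_m$ are all in line with the paper.

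Where your argument diverges -- and where it would run into trouble -- is the uniform Feller step. You propose redoing the Lyapunov--Schmidt reduction of Section~\ref{sec-unif} directly for the $H^m$-valued chain, using a basis of $H^m$ built by replicating $\{e_j\}$ across coordinates. But the lifted noise $\eeta_k=(0,\dots,0,\eta_{k+m-1})$ is supported on a single coordinate of $H^m$, so its projection onto any finite-dimensional block of $H^m$ spanning all $m$ coordinates has no density with respect to Lebesgue measure; the analogue of formula~\eqref{1.13} therefore fails for the lifted transition kernel, and ``taking $m$ steps at once'' does not repair this, because the $m$-step law is concentrated on the image of a nonlinear injection $(\eta_1,\dots,\eta_m)\mapsto(u_1,\dots,u_m)$, not on an affine subspace. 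You flag this as ``the only real obstacle,'' and indeed it is; the squeezing property~(C) addresses the high-mode tail, not this degeneracy. The paper circumvents the issue entirely by \emph{not} constructing a new reduction: it keeps the original $\MU$-chain in $\MH_N$ and the original densities $D_k$, observing that for $V,f\in\VV_m$ the expectation $\PPPP^V_k f(\uuu)$ can still be written as an integral over $B_{R,b}^k$ against $D_k\,\dd\ell_N^{\otimes k}$, with the only change being that $V$ and $f$ are evaluated on $m$ consecutive components $(v_{n-m+1},\dots,v_n)$ of the integration variables and initial segment (formula~\eqref{4.24}). Since the history is already encoded in the state $\bg\in\MU$, the estimates of Property~(P) in Section~\ref{s4.3} carry over verbatim, and no new reduction in $H^m$ is needed.
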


This result immediately implies that~$\zeta_k^m$, as measures  on ~$H^m$, satisfy  the LDP. To see this,  extend the rate function~$I_m$ constructed in Theorem~\ref{mainm} to the space~$\PP(H^m)$ by setting $I_m(\sigma)=+\infty$ for any measure $\sigma\in\PP(H^m)$ satisfying  $\sigma(\aA^{(m)})<1$. Then, recalling that~$\zeta_k^m$ are supported on~$\aA^{(m)}$ if so is the the initial measure~$\DD(u_0)$, we readily check that the LD upper and lower bounds hold for the family~$\{\zeta_k^m,k\ge1\}$ regarded as random probability measures on~$H^m$. 

\subsection{Uniform large deviations principle}
\label{s1.6}
The arguments of the proofs of Theorems~\ref{main} and~\ref{mainty} enable one to obtain a {\it uniform LDP\/} for the families~$\{\zeta_k\}$ and~$\{\zzeta_k\}$, which depend on the initial point. More precisely, let us denote by~$\zeta_k(u)$ the occupation measure~\eqref{E:1c} for the trajectory issued from a deterministic point $u\in\aA$ and define~$\zzeta_k(u)$ in a similar way. 
The definition of the uniform LDP is recalled in the Appendix (see Section~\ref{s5.2}). We have the following result. 

\begin{theorem}\label{T:umain}
Let Hypotheses~{\rm(A)--(D)} and Condition~\eqref{1.8} be satisfied. Then the uniform LDP holds for the families~$\{\zeta_k(u),u\in\aA\}$ and $\{\zzeta_k(u),u\in\aA\}$ with the good rate functions~$I$ and~$\III$ defined in Theorems~\ref{main} and~\ref{mainty}, respectively. 
\end{theorem}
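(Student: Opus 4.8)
The plan is to show that every estimate in the proofs of Theorems~\ref{main} and~\ref{mainty} is uniform with respect to the initial point $u\in\aA$, so that those arguments upgrade verbatim to the uniform LDP of Section~\ref{s5.2} (which is just the uniform-in-$u$ version of the upper and lower bounds, with the same good rate functions $I$, $\III$). The key structural fact is already present in the excerpt: the convergences~\eqref{c1} and~\eqref{c2} hold in the \emph{uniform} topology of $C(\aA)$. Taking $f\equiv1$ in~\eqref{c1} and using that $h_V\in C_+(\aA)$ is bounded above and away from $0$ on the compact set $\aA$, one obtains
$$
\frac1k\log\E_u\exp\biggl(\sum_{n=1}^kV(u_n)\biggr)\longrightarrow Q(V)\qquad\text{uniformly in }u\in\aA,
$$
first for $V\in\VV$ and then, by the $1$-Lipschitz continuity of $Q$ and density of $\VV$, for every $V\in C(\aA)$; this is the uniform form of Property~1 of Section~\ref{s2.3}. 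Since $\aA$ is compact the whole analysis lives inside the compact space $\PP(\aA)$, so no exponential tightness is needed and the uniform upper bound follows from the same Chebyshev step as before: for a ball $B\subset\PP(\aA)$ and $V\in C(\aA)$ one has $\IP_u\{\hat\zeta_k\in B\}\le\exp(-k\inf_{\sigma\in B}\langle V,\sigma\rangle)\,\PPPP_k^V1(u)$, whence $\limsup_k\sup_{u\in\aA}\frac1k\log\IP_u\{\hat\zeta_k\in B\}\le -\inf_{\sigma\in B}\langle V,\sigma\rangle+Q(V)$, and optimising over $V$, shrinking $B$, and covering a closed set $F$ by finitely many balls gives the uniform upper bound. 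Since $\zeta_k(u)$ and $\hat\zeta_k(u)$ are exponentially equivalent uniformly in $u$ (Lemma~\ref{l5.2}), it suffices to treat $\hat\zeta_k$.

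The uniform lower bound is the heart of the matter. Given an open $G\subset\PP(\aA)$, pick $V\in\VV$ whose unique equilibrium state $\sigma_V$ lies in $G$ and for which $I(\sigma_V)$ is as close as desired to $\inf_G I$; this is done exactly as in the proof of Theorem~\ref{main}, using density of $\VV$, the identity $\sigma_V=h_V\mu_V$, and the continuity of $V\mapsto(\la_V,h_V,\mu_V)$ furnished by Section~\ref{s3}, and it does not involve the initial point. Tilt the law of $(u_0,\dots,u_k)$ by the density proportional to $\exp(\sum_{n=1}^kV(u_n))$, i.e. pass to the Markov chain generated by the semigroup $\SSS_k^V$ of~\eqref{2.22}, whose unique stationary measure is $\sigma_V$; write $\IP_u^V$ for its law started from $u$. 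The change-of-measure identity, together with $\PPPP_k^V1(u)\ge c_0\la_V^k$ uniformly in $u$ (again~\eqref{c1} with $f\equiv1$) and the relation $Q(V)-\langle V,\sigma_V\rangle=-I(\sigma_V)$, gives for every $\e>0$ and all large $k$
$$
\IP_u\{\hat\zeta_k\in G\}\ge c\,e^{-k(I(\sigma_V)+\e)}\,\IP_u^V\bigl\{\hat\zeta_k\in G,\ |\langle V,\hat\zeta_k\rangle-\langle V,\sigma_V\rangle|<\e\bigr\},
$$
with $c>0$ independent of $k$ and of $u\in\aA$.

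Thus the uniform lower bound reduces to showing that the probability on the right tends to $1$ uniformly in $u\in\aA$; since $\sigma_V\in G$, $G$ is open, and $V\in C(\aA)$, this in turn follows from $\hat\zeta_k\rightharpoonup\sigma_V$ under $\IP_u^V$, in probability, uniformly in $u$. This is a quantitative, initial-point-uniform ergodic theorem for $\{\SSS_k^V\}$, and it is here that one must read Section~\ref{s3} with care: from the exponential convergence $\SSS_k^V g\to\langle g,\sigma_V\rangle$ in $C(\aA)$ provided by the abstract large-time results there, one derives for $g\in L_b(\aA)$ the Cesàro limit $\E_u^V\langle g,\hat\zeta_k\rangle\to\langle g,\sigma_V\rangle$ and the variance bound $\var\langle g,\hat\zeta_k\rangle\le C\|g\|_L^2/k$ under $\IP_u^V$, both uniform in $u$; Chebyshev's inequality applied to a finite convergence-determining family of functions $g$ (which suffices since $\aA$ is compact) then yields $\sup_{u\in\aA}\IP_u^V\{\|\hat\zeta_k-\sigma_V\|_L^*>\de\}\to0$ for every $\de>0$. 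Letting $\e\downarrow0$ and $I(\sigma_V)\downarrow\inf_G I$ gives $\liminf_k\inf_{u\in\aA}\frac1k\log\IP_u\{\hat\zeta_k\in G\}\ge -I(G)$, which completes the uniform LDP for $\{\zeta_k(u),u\in\aA\}$.

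For the trajectory measures $\{\zzeta_k(u),u\in\aA\}$ one proceeds as in the deduction of Theorem~\ref{mainty} from Theorems~\ref{main} and~\ref{mainm}: run the same argument for the finite-dimensional marginals $\zeta_k^m(u)$ on $\PP(\aA^{(m)})$ — the chain $\uu_n^m$ is again mixing and the associated generalised semigroups enjoy the same uniform Feller and uniform irreducibility properties, so one obtains a uniform LDP with good rate function $I_m$ for each $m$ — and then check that the Dawson--G\"artner reduction (Theorem~\ref{t5.2}) is a formal passage to the projective limit that preserves uniformity in the initial point; this yields the uniform LDP on $\PP(\HHH)$ with rate $\III$. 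The single genuinely new ingredient, and the main obstacle, is the uniform ergodic theorem for the tilted chains $\SSS_k^V$ entering the lower bound — the estimate $\sup_{u\in\aA}\IP_u^V\{\|\hat\zeta_k-\sigma_V\|_L^*>\de\}\to0$ — everything else being a re-reading of the proofs of Theorems~\ref{main}, \ref{mainm} and~\ref{mainty} keeping track of uniformity in $u$.
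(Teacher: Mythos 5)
Your proof takes a genuinely different and substantially longer route than the paper's. The paper proves Theorem~\ref{T:umain} in a few lines by observing that Kifer's criterion (Theorem~\ref{T:Kif}) is already formulated over an arbitrary directed set: one takes $\Theta=\N\times\aA$ ordered by $(k_1,u^1)\prec(k_2,u^2)\Leftrightarrow k_1\le k_2$, notes that the convergence in~\eqref{E:3.1*} is uniform in the deterministic starting point $u\in\aA$ so the Kifer limit~\eqref{E:H1} exists over $\Theta$, and reuses the uniqueness of equilibrium states verbatim (it never involves $u$). The resulting bounds over $\Theta$ unwind, upon writing out $\liminf_{\theta\in\Theta}$ and $\limsup_{\theta\in\Theta}$, exactly into the uniform LDP~\eqref{6.8}. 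You correctly identified the uniformity of~\eqref{c1} and of the limit defining $Q$, but you then discard Kifer's theorem and rebuild the Donsker--Varadhan machinery by hand (Chebyshev/covering for the upper bound, a change of measure to $\SSS_k^V$ plus a uniform ergodic theorem for the lower bound). That is much more work than the paper requires, and it is precisely where your argument develops gaps.

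Two steps of your lower bound are not actually justified by the material in the paper. First, the claimed variance bound $\var\lag g,\hat\zeta_k\rag\le C\|g\|_L^2/k$ under $\IP^V_u$ does not follow from Theorem~\ref{t3.1}: that theorem gives only the qualitative convergence $\SSS^V_k g\to\lag g,\sigma_V\rag$ in $C(\aA)$, with no rate, so there is no summable decay of correlations to produce the $1/k$ scaling. What one does get, by the uniform convergence of $\SSS^V_n g$ and a Ces\`aro argument on the covariances $\mathrm{Cov}_u(g(u_m),g(u_n))$, is $\sup_{u\in\aA}\var^V_u\lag g,\hat\zeta_k\rag\to0$ with no stated rate; this is enough for your purposes, but the claim as written is unsupported. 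Second, and more seriously, the sentence ``pick $V\in\VV$ whose unique equilibrium state $\sigma_V$ lies in $G$ and for which $I(\sigma_V)$ is as close as desired to $\inf_G I$; this is done exactly as in the proof of Theorem~\ref{main}, using \dots\ the continuity of $V\mapsto(\la_V,h_V,\mu_V)$ furnished by Section~\ref{s3}'' papers over a real argument. The proof of Theorem~\ref{main} does \emph{not} do this -- it hands the task to Kifer's theorem, which is exactly the device that converts ``uniqueness of the equilibrium state on a dense $\VV$'' into the lower bound -- and Section~\ref{s3} proves nothing about continuity of $(\la_V,h_V,\mu_V)$ in $V$. If you want to bypass Kifer's theorem you must reprove this approximation step (essentially the content of Kifer's lower-bound proof), and as it stands this is a genuine gap. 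The remedy is to follow the paper: keep Kifer's theorem, feed it the directed set $\Theta=\N\times\aA$, and let the abstract result do all the work; then the extension to $\{\zzeta_k(u)\}$ via Dawson--G\"artner (Theorem~\ref{t6.4}) applied to the marginals $\zeta_k^m(u)$ goes through exactly as you describe.
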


\begin{proof}[Sketch of the proof] 
Let us define the set $\Theta:=\N\times \aA$ and introduce an order relation~$\prec$ on it by the following rule:  if $\theta_i=(k_i,u^i)\in \Theta$ for $i=1,2$, then $\theta_1\prec\theta_2$ if and only if $k_1\le k_2$. Then~$(\Theta, \prec)$ is a directed set. Defining $r(\theta)=k$, we apply Theorem~\ref{T:Kif} to the family $\zeta_\theta=\zeta_k(u)$ indexed by $\theta=(k,u)\in \Theta$. The scheme of the proof described above for Theorem~\ref{main} applies equally well in this case, and using the fact that the convergence in~\eqref{E:3.1*} is uniform with respect to  the deterministic initial condition $u_0\in \aA$, we get the existence of limit~\eqref{E:H1} and uniqueness of equilibrium measure. Thus, we have the LDP
\begin{equation} \label{1.49}
-I(\dot\Gamma)
\le \liminf_{\theta\in \Theta}\frac1k\log   \IP\{\zeta_\theta\in\Gamma\}
\le \limsup_{\theta\in \Theta}\frac1k\log   \IP\{\zeta_\theta\in\Gamma\}
\le -I(\overline\Gamma).
\end{equation}
Now notice that the middle terms in this inequality can be written as
\begin{align*}
\liminf_{\theta\in \Theta}\frac1k\log\IP\{\zeta_\theta\in\Gamma\}&=\liminf_{k\ri+\ty}\frac1k \inf_{u\in\aA}\log\IP\{\zeta_k(u)\in\Gamma\},
\\
\limsup_{\theta\in \Theta}\frac1k\log\IP\{\zeta_\theta\in\Gamma\}&=\limsup_{k\ri+\ty}\frac1k \sup_{u\in\aA}\log\IP\{\zeta_k(u)\in\Gamma\}.
\end{align*}
Substituting these relations into~\eqref{1.49}, we obtain the uniform LDP for~$\zeta_k(u)$. 

\smallskip
To establish the uniform LDP for~$\zzeta_k(u)$, we apply Theorem~\ref{t6.4}. We thus need the uniform LDP for the projected measures~$\zeta_k^m=\zeta_k^m(u)$ defined in Section~\ref{s2.4}. The latter can be obtained by modifying the proof of Theorem~\ref{mainm} exactly in the same way as we did above to get the uniform LDP for~$\zeta_k(u)$. 
\end{proof}

\section{Large-time asymptotics for generalised Markov semigroups}
\label{s3}
In this section, we prove a general result on the large-time behaviour of trajectories for a class of dual semigroups. This type of results were established earlier for Markov semigroups satisfying a uniform Feller and an irreducibility properties; see~\cite{LY-1994,szarek-1997,KS-cmp2000,LS-2006,KS-book}. The main theorem of this section is a generalisation of Theorem~4.2 in~\cite{KS-cmp2000} and has independent interest.

\medskip
Let $X$ be a compact metric space, let $\MM_+(X)$ be the space of non-negative Borel measures on~$X$ endowed with the topology of weak convergence,  and let $\{P(u,\cdot),u\in X\}\subset\MM_+(X)$ be a family satisfying the following condition:
\begin{description}
\item[Feller property.]
The function $u\mapsto P(u,\cdot)$  from~$X$ to~$\MM_+(X)$ is continuous and non-vanishing. 
\end{description}
In this case, we shall say that $P(u,\Gamma)$ is a {\it generalised Markov kernel\/}. One obvious consequence of  the Feller  property is the inequality
\[
C^{-1}\le P(u,X)\le C\quad\mbox{for all $u\in X$.}
\]
Define the operators
$$
\PPPP f(u)=\int_XP(u,dv)f(v), \quad 
\PPPP^*\mu(\Gamma)=\int_XP(u,\Gamma)\mu(du)
$$
and denote $\PPPP_k=\PPPP^k$ and $\PPPP_k^*=(\PPPP^*)^k$. It is easy  to see that
$$
\PPPP_kf(u)=\int_XP_k(u,dv)f(v), \quad 
\PPPP_k^*\mu(\Gamma)=\int_XP_k(u,\Gamma)\mu(du), 
$$
where $P_k(u,\Gamma)$ is defined by the relations $P_0(u,\cdot)=\delta_u$,  
$P_1(u,\cdot)=P(u,\cdot)$, and 
$$
P_k(u,\cdot)=\int_XP_{k-1}(u,dv)P(v,\cdot),\quad k\ge2. 
$$
To simplify the notation, the sup-norm on~$C(X)$ is denoted in this section by~$\|\cdot\|$. Let~$\mathbf 1$ be the function on~$X$ identically equal to~$1$. 
Recall that a family~$\CC\subset C(X)$ is called  {\it determining\/} if any two measures $\mu,\nu\in\MM_+(X)$ satisfying the relation $\lag f,\mu\rag=\lag f,\nu\rag$ for all $f\in\CC$ coincide.  In this section we prove:

\begin{theorem} \label{t3.1}
Let $P(u,\Gamma)$ be a generalised Markov kernel satisfying the following conditions.
\begin{description}
\item[Uniform Feller property.]
There is a determining family $\CC\subset C_+(X)$ of non-zero functions such that ${\mathbf1}\in\CC$ and the sequence $\{\|\PPPP_kf\|^{-1}\PPPP_kf,k\ge0\}$ is equicontinuous for any $f\in \CC$. 
\item[Uniform irreducibility.]
For any $r>0$ there is an integer $m\ge1$ and a constant $p>0$ such that
\begin{equation} \label{9}
P_m(u,B(\hat u,r))\ge p\quad\mbox{for all $u,\hat u\in X$}. 
\end{equation}
\end{description}
Then there is a constant $\lambda>0$, a unique measure $\mu\in\PP(X)$ whose support coincides with~$X$, and a unique  $h\in C_+(X)$ satisfying $\lag h,\mu\rag=1$, such that  for any $f\in C(X)$ and $\nu\in\MM_+(X)$ we have
\begin{gather}
\PPPP h=\lambda h, \quad \PPPP^*\mu=\lambda\mu,\label{10}\\
\lambda^{-k}\PPPP_k f\to\lag f,\mu\rag h\quad\mbox{in $C(X)$ as $k\to\infty$}, \label{11}\\
\lambda^{-k}\PPPP_k^*\nu\rightharpoonup\lag h,\nu\rag\mu
\quad\mbox{as $k\to\infty$}. \label{12}
\end{gather}
 \end{theorem}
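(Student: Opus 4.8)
The plan is to read Theorem~\ref{t3.1} as a Perron--Frobenius/Krein--Rutman statement for the positive — but not mass-preserving — operator~$\PPPP$ on~$C(X)$, in three stages: first obtain the eigenvalue~$\lambda$ and dual eigenmeasure~$\mu$ by a soft fixed-point argument; then extract the positive eigenfunction~$h$ as a limit of normalised iterates, using the uniform Feller property for compactness and the uniform irreducibility for two-sided bounds; finally upgrade subsequential to full convergence, which yields \eqref{11}, \eqref{12} and all the uniqueness claims simultaneously. For the eigenpair I would consider the map $\Phi\colon\PP(X)\to\PP(X)$, $\Phi(\nu)=\PPPP^*\nu/\lag\PPPP\mathbf 1,\nu\rag$. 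The Feller property makes~$\PPPP$ a bounded operator on~$C(X)$ and forces $\lag\PPPP\mathbf 1,\nu\rag\in[C^{-1},C]$, so~$\Phi$ is a continuous self-map of the compact convex set~$\PP(X)$; the Leray--Schauder--Tychonoff theorem then gives $\mu$ with $\PPPP^*\mu=\lambda\mu$, $\lambda:=\lag\PPPP\mathbf 1,\mu\rag>0$. That $\supp\mu=X$ is immediate from uniform irreducibility: applying $\PPPP_m^*\mu=\lambda^m\mu$ to a ball yields $\lambda^m\mu(B(\hat u,r))=\int_X P_m(u,B(\hat u,r))\,\mu(\dd u)\ge p>0$, so~$\mu$ charges every ball.

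For the eigenfunction, put $h_k:=\lambda^{-k}\PPPP_k\mathbf 1$. Pairing with~$\mu$ and using $\PPPP_k^*\mu=\lambda^k\mu$ gives $\lag h_k,\mu\rag=1$ for all~$k$, in particular $\|h_k\|\ge1$. Since $\mathbf 1\in\CC$, the uniform Feller property makes the family $\{g_k\}$, $g_k:=\|h_k\|^{-1}h_k=\|\PPPP_k\mathbf 1\|^{-1}\PPPP_k\mathbf 1$, equicontinuous. If $\|h_{k_j}\|\to\infty$ along a subsequence, then by Arzel\`a--Ascoli $g_{k_j}\to g_*$ in~$C(X)$ with $\|g_*\|=1$ but $\lag g_*,\mu\rag=\lim\|h_{k_j}\|^{-1}=0$, which contradicts $g_*\ge0$ and $\supp\mu=X$; hence $\|h_k\|\le C_1$ for all~$k$. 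For the lower bound, equicontinuity of~$\{g_k\}$ provides, around a maximiser of~$h_k$, a ball of some fixed radius~$r$ on which $h_k\ge\tfrac12\|h_k\|\ge\tfrac12$; inserting this into $h_{j+m}(u)=\lambda^{-m}\int_X P_m(u,\dd v)\,h_j(v)$ and invoking $P_m(u,\cdot)\ge p$ on that ball gives $\inf_X h_k\ge c_0>0$ for all large~$k$. Thus $\{h_k:k\ge m_0\}$ is equicontinuous with values in $[c_0,C_1]$, hence relatively compact in~$C(X)$, and every limit point lies in~$C_+(X)$ with $\mu$-integral~$1$.

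Let~$\Psi$ be the nonempty compact set of limit points of~$\{h_k\}$. Continuity of~$\PPPP$ gives $\lambda^{-1}\PPPP\Psi\subset\Psi$, and stepping one iterate back along convergent subsequences shows $\lambda^{-1}\PPPP\colon\Psi\to\Psi$ is onto, hence so is $\lambda^{-n}\PPPP_n$ for every~$n$. The crux of the argument is a contraction estimate: there exist $n\ge1$ and $q<1$ with $\|\lambda^{-n}\PPPP_n\phi_1-\lambda^{-n}\PPPP_n\phi_2\|\le q\|\phi_1-\phi_2\|$ whenever $c_0\le\phi_i\le C_1$ and $\lag\phi_1-\phi_2,\mu\rag=0$. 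This is where the two hypotheses must be used together — uniform irreducibility to produce, after finitely many steps, a uniform overlap between the measures $\lambda^{-n}P_n(u,\cdot)$ (a Doeblin-type minorisation), and the uniform Feller property to keep the iterates in a compact, equicontinuous family so that such a coupling can be propagated and iterated — and I expect this to be the main obstacle; it is the extension to the non-conservative setting of the argument behind Theorem~4.2 of~\cite{KS-cmp2000}. Granting it, every $h\in\Psi$ equals $\lambda^{-n}\PPPP_n\phi$ for some $\phi\in\Psi$ and any two elements of~$\Psi$ have the same $\mu$-integral, so the estimate forces $\mathrm{diam}\,\Psi=0$: the full sequence converges, $h_k\to h$ in~$C(X)$, and $\PPPP h=\lambda h$. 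Applied to two $\lambda$-eigenfunctions in~$C_+(X)$ with equal $\mu$-integral, on which $\lambda^{-n}\PPPP_n$ acts as the identity, the same estimate gives uniqueness of~$h$.

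It remains to pass from~$\mathbf 1$ to general functions. Repeating the previous stage with~$\mathbf 1$ replaced by an arbitrary $f\in\CC$ (which is bounded below by some $\e_f>0$, so the same two-sided bounds hold) yields $\lambda^{-k}\PPPP_kf\to\lag f,\mu\rag h$ in~$C(X)$ for $f\in\CC$. Pairing this with $\nu\in\MM_+(X)$, using that~$\CC$ is determining and that $\{\lambda^{-k}\PPPP_k^*\nu\}$ is bounded in~$\MM_+(X)$, hence weakly relatively compact on the compact space~$X$, one identifies every weak limit point of $\lambda^{-k}\PPPP_k^*\nu$ as $\lag h,\nu\rag\mu$, which gives~\eqref{12}; specialising to $\nu=\delta_u$ then gives~\eqref{11} pointwise for every $f\in C(X)$. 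Uniformity for general~$f$ follows by passing to the genuine Markov semigroup $\SSS_k g=\lambda^{-k}h^{-1}\PPPP_k(gh)$, which has stationary measure $h\,\mu$ and inherits the uniform Feller and uniform irreducibility properties, so that the mass-preserving case applies to it and $\lambda^{-k}\PPPP_kf=h\cdot\SSS_k(f/h)\to h\,\lag f,\mu\rag$ uniformly. Finally, feeding an arbitrary eigenmeasure~$\mu'$ into~\eqref{12} forces its eigenvalue to equal~$\lambda$ and $\mu'=\mu$, which together with the uniqueness of~$h$ completes the proof.
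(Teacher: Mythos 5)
Your first two stages match the paper: the Leray--Schauder fixed point for $\mu$ and $\lambda$, the identification $\supp\mu=X$ via uniform irreducibility, and the two-sided bound on $\|\lambda^{-k}\PPPP_k{\mathbf1}\|$ using equicontinuity and $\lag\lambda^{-k}\PPPP_k{\mathbf1},\mu\rag=1$ all appear (in slightly different packaging) as Steps~1--3 there. A small difference is that the paper produces~$h$ as a limit point of the Ces\`aro averages $\frac1k\sum_{l<k}\lambda^{-l}\PPPP_l f$, whose invariance is automatic, rather than of the raw iterates $\lambda^{-k}\PPPP_k{\mathbf1}$; this lets the paper defer the hard convergence question to a separate step.

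The genuine gap is the sup-norm contraction you posit for $\lambda^{-n}\PPPP_n$ on $\{\phi_1-\phi_2:\ c_0\le\phi_i\le C_1,\ \lag\phi_1-\phi_2,\mu\rag=0\}$. You correctly flag it as the main obstacle, but the mechanism you invoke --- ``a uniform overlap between the measures $\lambda^{-n}P_n(u,\cdot)$ (a Doeblin-type minorisation)'' --- does not follow from the hypotheses. Uniform irreducibility only gives $P_m(u,B(\hat u,r))\ge p$ for all $u,\hat u$; it lower-bounds the mass that each $P_m(u,\cdot)$ assigns to every ball, but says nothing about total-variation overlap between $P_m(u_1,\cdot)$ and $P_m(u_2,\cdot)$, which could still be mutually singular. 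So no minorisation $P_m(u,\cdot)\ge p\,\nu(\cdot)$ by a fixed measure $\nu$ is available, and the coupling behind a genuine sup-norm contraction breaks down. The paper's Step~4 circumvents this by working in the $L^1(\mu)$-norm: it first observes that $\|\lambda^{-k}\PPPP_k g\|_\mu$ is non-increasing (using only $\PPPP^*\mu=\lambda\mu$), then shows that it drops by a uniform factor $(1-A^{-1})$ after $m$ further steps whenever both $\sup(\lambda^{-k}\PPPP_k g)^+$ and $\sup(\lambda^{-k}\PPPP_k g)^-$ stay bounded away from zero --- here equicontinuity localises a definite fraction of each part into a ball of fixed radius $r$, and the ball-minorisation is applied to that; finally Arzel\`a--Ascoli together with $\supp\mu=X$ upgrades $L^1(\mu)$-decay along a lacunary subsequence to sup-norm convergence of the whole sequence. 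A related looseness occurs at the end, where you deduce uniformity in~\eqref{11} by ``applying the mass-preserving case'' to the tilted semigroup $\SSS_k g=\lambda^{-k}h^{-1}\PPPP_k(gh)$: there is no separate mass-preserving theorem in this generality to cite, and checking that $\SSS_k$ inherits the uniform Feller property on a determining class would require essentially the same analysis; the paper instead proves~\eqref{11} in $C(X)$ directly for $f\in\CC$ (applying the $L^1(\mu)$-decay to $g=f-\lag f,\mu\rag h$) and extends to $C(X)$ by density and the uniform operator bound.
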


\begin{proof} Note that the  uniqueness of $h$ and $\mu$ is an immediate consequence of the normalisation and relations (\ref{10})-(\ref{12}). 
We split the proof in four steps.

{\it Step~1}. We first prove the existence of a measure satisfying the second relation in~\eqref{10}. To this end, let 
 $F:\PP(X)\to\PP(X)$ be a map defined by 
 \[F(\mu)=(\PPPP^*\mu(X))^{-1}\PPPP^*\mu.
 \]
The Feller property implies that this  map is well defined and  continuous in the weak topology. 
Since $\PP(X)$ is a convex compact set, by the Leray--Schauder theorem, the mapping~$F$ has a fixed point~$\mu\in\PP(X)$. We thus obtain the second relation in~\eqref{10} with $\lambda=\PPPP^*\mu(X)$. In what follows, we may assume without loss of generality that $\lambda=1$; otherwise, we can replace~$P(u,\Gamma)$ by $\lambda^{-1}P(u,\Gamma)$. 

\smallskip
{\it Step~2}. 
Let us prove that, for any $f\in\CC$, we have
\begin{equation} \label{15}
C_f^{-1}\le \|\PPPP_{k}f\|\le C_f\quad\mbox{for all $k\ge1$},
\end{equation}
where $C_f>1$ is a constant not depending on~$k$. Indeed, suppose that there is a sequence $k_j\to\infty$ such that 
\begin{equation} \label{16}
\|\PPPP_{k_j}f\|+\|\PPPP_{k_j}f\|^{-1}\to+\infty\quad\mbox{as $j\to\infty$}.
\end{equation}
In view of the uniform Feller property, we can assume that 
$$
\|\PPPP_{k_j}f\|^{-1}\PPPP_{k_j}f\to g
\quad\mbox{in $C(X)$ as $j\to\infty$},
$$
where $g\in C(X)$ is function whose norm is equal to~$1$. 
Integrating with respect to~$\mu$ and using the invariance of~$\mu$, we derive
\begin{equation} \label{14}
\|\PPPP_{k_j}f\|^{-1}\lag f,\mu\rag\to\lag g,\mu\rag\quad\mbox{as $j\to\infty$}.
\end{equation}
The uniform irreducibility  implies that for any $\hat u\in X$ and $r>0$ we have
$$
\mu\bigl(B(\hat u,r)\bigr)=\int_XP_m\bigl(u,B(\hat u,r)\bigr)\mu(du)
\ge p\,\mu(X)>0.
$$ 
Hence, $\supp\mu=X$, and since $f,g\in C_+(X)$ are non-zero functions, we have that $\lag f,\mu\rag>0$ and $\lag g,\mu\rag>0$. It now follows from~\eqref{14} that the sequence $\|\PPPP_{k_j}f\|$ has a finite positive limit, and therefore~\eqref{16} cannot hold. 

\smallskip
{\it Step~3}. 
Let us prove the existence of~$h\in C_+(X)$ satisfying the first relation in~\eqref{10} with $\lambda=1$. Let $f\in\CC$ be an arbitrary function. The uniform Feller property and inequality~\eqref{15} imply that the sequence $\PPPP_kf$ is uniformly equicontinuous. It follows that so is the sequence 
$$
f_k=\frac1k\sum_{l=0}^{k-1}\PPPP_lf. 
$$
Let $h$ be a limit point for~$\{f_k\}$. It is straightforward to see that $h\ge0$ and $\PPPP_1h=h$. Furthermore, since $\lag f_k,\mu\rag =\lag f,\mu\rag>0$, we see that~$h$ is non-zero. Multiplying~$h$ by a constant, we can assume that $\lag h,\mu\rag=1$. It remains to prove that $h(u)>0$ for all $u\in X$. Indeed, let $\hat u\in X$ be any point at which~$h$ is positive. Then there is $r>0$ such that
$h(v)\ge r$ for $v\in B(\hat u,r)$. It follows that, for any $u\in X$, we have
\begin{align*}
h(u)=\PPPP_mh(u)&=\int_XP_m(u,dv)h(v)\ge\int_{B(\hat u,r)}P_m(u,dv)h(v)\\
&\ge rP_m\bigl(u,B(\hat u,r)\bigr)\ge rp>0,
\end{align*}
where $m\ge1$ is the integer from~\eqref{9}.

\smallskip
{\it Step~4}. 
We now establish convergence~\eqref{11} and~\eqref{12} with $\lambda=1$. To this end, we first note that~\eqref{12} is an immediate consequence of~\eqref{11}. Furthermore, the right-hand inequality in~\eqref{15} with $f={\mathbf1}$ implies that the norms of the operators~$\PPPP_k$ are bounded by~$C_{\mathbf1}$ for all $k\ge1$. Since the linear span of a determining family is dense in~$C(X)$, it suffices to establish~\eqref{11} for any $f\in\CC$. 

Let us fix an arbitrary $f\in\CC$ and define the function $g=f-\lag f,\mu\rag h$. We need to prove that $\PPPP_kg\to0$ in~$C(X)$. Since $\{\PPPP_kg,k\ge0\}$ is uniformly equicontinuous and the norms of~$\PPPP_k$ are bounded, the required assertion will be established if we prove that any sequence of integers~$n_i \rightarrow \infty$ contains a subsequence $\{k_j\}\subset\{n_i\}$ for which
\begin{equation} \label{18}
\|g_{k_j}\|_\mu\to0\quad\mbox{as $j\to\infty$},
\end{equation}
where we set $g_k=\PPPP_kg$. 
Since $\lag g_k,\mu\rag=0$ for $k\ge0$, convergence~\eqref{18} certainly holds for any subsequence $\{k_j\}$ such that $\|g_{k_j}^+\|\to0$ or $\|g_{k_j}^-\|\to0$ as $j\to\infty$. Let us assume that there is no subsequence satisfying this property. Then there exists  sequences~$\{u_i^\pm\}\subset X$ and a constant~$\alpha>0$ such that
\begin{equation} \label{17}
\tilde g_i^+(u_i^+)=\max_{u\in X}\tilde g_i^+(u)\ge\alpha, \quad
\tilde g_i^-(u_i^-)=\max_{u\in X}\tilde g_i^-(u)\ge\alpha, 
\end{equation}
where we set $\tilde g_i=g_{n_i}$. Since $\tilde g_i^\pm$ are uniformly equicontinuous, we can find $r>0$ not depending on~$k$ such that
\begin{equation} \label{018}
\tilde g_i^\pm(u)\ge \frac12 \tilde g_i^\pm(u_i^\pm)\quad\mbox{for $u\in B(u_i^\pm,r)$}. 
\end{equation}
Let~$m$ and~$p$ be the constants arising in the uniform irreducibility condition. 
Then~\eqref{018} and~\eqref{15} imply
\begin{align*}
\PPPP_m\tilde g_i^\pm(u)&=\int_XP_m(u,dv)\tilde g_i^\pm(v)
\le C_{\mathbf1}\tilde g_i^\pm(u_i^\pm),\\
\PPPP_m\tilde g_i^\pm(u)&\ge\int_{B(u_i^\pm,r)}P_m(u,dv)\tilde g_i^\pm(v)
\ge p\,\tilde g_i^\pm(u_i^\pm)/2,
\end{align*}
and  it follows that 
\begin{equation} \label{19}
\sup_{u\in X}\PPPP_m\tilde g_i^\pm(u)\le A_g\inf_{u\in X}\PPPP_m\tilde g_i^\pm(u),
\end{equation}
where $A_g=2C_{\mathbf1}/p$. We now write
\begin{align}
\|\PPPP_m\tilde g_i\|_\mu
&=\int_X|\PPPP_m(\tilde g_i^+-\tilde g_i^-)|d\mu\notag\\
&=\int_X\bigl|(\PPPP_m\tilde g_i^+-A_g^{-1}\|\tilde g_i^+\|_\mu)-
(\PPPP_m\tilde g_i^--A_g^{-1}\|\tilde g_i^-\|_\mu)\bigr|d\mu\notag\\
&\le \int_X\bigl|\PPPP_m\tilde g_i^+-A_g^{-1}\|\tilde g_i^+\|_\mu\bigr|d\mu+
\int_X\bigl|\PPPP_m\tilde g_i^--A_g^{-1}\|\tilde g_i^-\|_\mu\bigr|d\mu\notag\\
&=\int_X\PPPP_m(\tilde g_i^++\tilde g_i^-)d\mu-A_g^{-1}\bigl(\|\tilde g_i^+\|_\mu+\|\tilde g_i^-\|_\mu\bigr)\notag\\
&=(1-A_g^{-1})\|\tilde g_i\|_\mu.\label{22} 
\end{align}
Furthermore, for any $f\in C(X)$ and $k\ge1$, we have
$$
\|\PPPP_kf\|_\mu=\langle|\PPPP_kf|,\mu\rangle\le\langle\PPPP_k|f|,\mu\rangle
=\langle|f|,\mu\rangle=\|f\|_\mu. 
$$
It follows that the sequence $\{\|\PPPP_kg\|_\mu\}$ is non-increasing. Combining this property with~\eqref{22}, we see that if $n_l\ge n_i+m$, then
\begin{equation} \label{20}
\|g_{n_l}\|_\mu\le (1-A_g^{-1})\|g_{n_i}\|_\mu.
\end{equation}
Let us choose a subsequence $\{k_j\}\subset\{n_i\}$ such that $k_{j+1}\ge k_j+m$. Then~\eqref{20} implies that
$$
\|g_{k_j}\|_\mu=\|\PPPP_{k_j}g\|_\mu
\le (1-A_g^{-1})^j\|g\|_\mu\quad\mbox{for $j\ge0$},
$$
where $k_0=0$. This proves convergence~\eqref{18} and  completes the proof of the theorem. 
\end{proof}

\section{The uniform Feller property}
\label{sec-unif}
We shall use freely  the notation introduced in Subsection \ref{s1.1} (we recall  in particular that 
$\{e_j\}$ is the orthonormal basis introduced in Condition~(C), that ~${\mathsf P}_N$ is   the orthogonal projection onto $H_N=\lspan\{e_1,\dots,e_N\}$, and 
that ${\cal A}={\cal A}(\{0\})$ is the domain of attainability from zero).  Let~$\VV$ be the set  of functions $V\in C(\aA)$ for which there is an integer $N\ge1$ and a function $F\in C(H_N)$ such that
\begin{equation} \label{4.4}
V(u)=F({\mathsf P}_Nu)\quad\mbox{for $u\in\aA$}.
\end{equation}
It is easy to see that~$\VV$ is a dense subspace in~$C(\aA)$ containing the constant functions. In particular, the intersection $\CC=\VV\cap C_+(\aA)$ is a determining family for~$\PP(\aA)$. 

For any $V\in C(\aA)$, let us consider the following generalised Markov kernel on~$\aA$:
\begin{align}\label{E:3.2}
P^V_1(u,\Gamma)=\E_u \bigl(I_\Gamma(u_1) e^{V(u_1)}\bigr)
=\int_\Gamma P_1(u,dv)e^{V(v)},\quad u\in \aA, \quad
\Gamma\in \BB(\aA).
\end{align}
The corresponding semigroup of operators is given by~\eqref{2.18}. The goal of this section is to prove:

\begin{theorem} \label{main-tech}
Under the hypotheses of Theorem~\ref{main}, for any $V\in\VV$ the semigroup $\{\PPPP_k^V\}$ possesses the uniform Feller property for the determining class~$\CC$. In other words, for any $V\in\VV$ and $f\in\CC$ the sequence $\{\|\PPPP_k^Vf\|_\infty^{-1}\PPPP_k^Vf,k\ge0\}$ is uniformly equicontinuous. 
\end{theorem}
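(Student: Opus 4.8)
The plan is to prove the uniform Feller property by a Lyapunov--Schmidt (low-mode/high-mode) splitting combined with a coupling construction for the generalised Markov semigroup $\PPPP_k^V$. Write $u=(p,q)$ with $p={\mathsf P}_Nu$ and $q=({\rm I}-{\mathsf P}_N)u$, where $N$ is chosen so that $V(u)=F(p)$ factors through the low modes (this is the point of restricting to $V\in\VV$). Fix $f\in\CC=\VV\cap C_+(\aA)$; by enlarging $N$ we may assume $f$ also factors through ${\mathsf P}_N$. Since $\CC$ is determining and $\mathbf 1\in\CC$, once equicontinuity of $\{\|\PPPP_k^Vf\|_\infty^{-1}\PPPP_k^Vf\}$ is established for every such $f$, Theorem~\ref{main-tech} follows.

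**Key steps.** First, I would record a priori bounds: by Condition~(B) all trajectories issued from $\aA$ stay in a fixed ball $B_H(\rho)$, so $V$, $F$, $f$ are bounded and Lipschitz on the relevant set, and one has two-sided bounds $C^{-1}\le\|\PPPP_k^Vf\|_\infty\le C$ uniformly in $k$ (this is the analogue of~\eqref{15}; it can also be taken from the abstract framework once the uniform Feller property is in place, but a direct elementary bound using $\|f\|_\infty,\|V\|_\infty<\infty$ suffices here). Second, the heart of the matter: given $u,u'\in\aA$ with $\|u-u'\|$ small, I would build a coupling $(u_k,u_k')$ of the two trajectories, driven by the kick noise of Condition~(D), so that the low-mode components are forced to coincide after one step with high probability, while the high-mode components contract by~\eqref{1.5}. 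Concretely, at each step one couples ${\mathsf P}_N\eta_k$ to kill the discrepancy ${\mathsf P}_N(S(u_{k-1})-S(u_{k-1}'))$ — possible because $\eta_k$ has a Lipschitz density, positive at $0$, supported in a cube, so a maximal coupling of the shifted laws on $H_N$ has a failure probability bounded by $C\|{\mathsf P}_N(S(u_{k-1})-S(u_{k-1}'))\|$ — and couples ${\mathsf Q}_N\eta_k$ trivially (identically). By~\eqref{1.2} and~\eqref{1.5}, on the event that all low-mode couplings succeed up to time $k$, one has $u_k=u_k'$ on the low modes and $\|{\mathsf Q}_N(u_k-u_k')\|\le\gamma_N(\rho)^k\|u-u'\|$; choosing $N$ large makes $\gamma_N(\rho)<1$. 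Third, I would estimate the difference $\PPPP_k^Vf(u)-\PPPP_k^Vf(u')$ along this coupling: split according to whether the coupling has succeeded or not; on the success event the two weighted functionals differ only through the exponential weights $\exp(\sum V(u_n))$ vs. $\exp(\sum V(u_n'))$ and the terminal $f$, and since $V=F\circ{\mathsf P}_N$, $f=\tilde f\circ{\mathsf P}_N$ depend only on the low modes which agree once the coupling locks, the discrepancy in the weights accumulated before locking is controlled by $\|F\|_{\rm Lip}\sum_n\|{\mathsf P}_N(u_n-u_n')\|$ together with the boundedness of $\|V\|_\infty$; on the complementary (small-probability) event one uses the crude bound $\le 2\|f\|_\infty e^{k\|V\|_\infty}$ times the failure probability. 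Here one must be careful: the exponential weight $e^{k\|V\|_\infty}$ is huge, so the failure probability must be summably small — but the per-step failure probability is $\le C\gamma_N^{\,n}\|u-u'\|\,C(\rho)$, a geometric series, so its contribution is $\le C(k,N)\|u-u'\|$ with a constant that, after dividing by $\|\PPPP_k^Vf\|_\infty\ge C^{-1}$, still tends to $0$ as $\|u-u'\|\to0$ uniformly in $k$.

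**Main obstacle.** The delicate point is matching scales: the exponential weight $\exp(\sum_{n=1}^k V(u_n))$ can be as large as $e^{k\|V\|_\infty}$, which grows with $k$, whereas equicontinuity must be \emph{uniform} in $k$. The coupling failure probability therefore has to be not merely small but geometrically summable in the number of elapsed steps, so that even after multiplication by the bad exponential factor the total is bounded by a modulus-of-continuity times a $k$-independent constant. This forces the low-mode coupling to lock with a failure probability proportional to the \emph{current} low-mode discrepancy (hence shrinking geometrically along the way), which in turn is why one needs both the absolute continuity of ${\mathsf P}_N\eta_k$ with a density positive at the origin (Condition~(D)) and the high-mode contraction $\gamma_N<1$ (Condition~(C)), together with the uniform-in-$\aA$ bound from dissipativity (Condition~(B)). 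Once these three ingredients are arranged in the right order — contract high modes, couple low modes with failure probability proportional to the discrepancy, sum the geometric series, then divide by the two-sided bound on $\|\PPPP_k^Vf\|_\infty$ — the uniform equicontinuity follows, and with it the theorem.
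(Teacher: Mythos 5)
Your approach (coupling the two trajectories, forcing the low modes to agree with high probability, and letting the high modes contract) is genuinely different from the paper's proof. The paper does not couple at all: it passes to the state space $\MH_N$ of trajectory histories via the Lyapunov--Schmidt reduction of Proposition~\ref{p1.2}, writes $\bPPPP_k^V\fff$ explicitly as an integral against the density $D_k(\bg,\sigma_1,\dots,\sigma_k)$ of~\eqref{1.14}, and then differentiates that representation with respect to each coordinate $\varUpsilon_j$, obtaining the weighted derivative bound~\eqref{E:3.9}. Equicontinuity is deduced from the exponential decay of these derivatives and the mean value theorem, not from a probabilistic estimate.

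There is, however, a genuine gap in your argument, and it sits exactly at the spot you flag as the ``main obstacle.'' Your estimate on the failure event is the crude bound $2\|f\|_\infty e^{k\|V\|_\infty}\cdot\IP(\text{failure})$, and you sum a geometric series to get $\IP(\text{failure})\le C\|u-u'\|$ with a $k$-independent $C$. But then the failure contribution is of order $e^{k\|V\|_\infty}\|u-u'\|$, which blows up with $k$. Dividing by $\|\PPPP_k^Vf\|_\infty$ does not rescue you: the two-sided bound $C^{-1}\le\|\PPPP_k^Vf\|_\infty\le C$ that you invoke is \emph{not} elementary -- it is precisely~\eqref{15}, which in the paper is derived in Step~2 of Theorem~\ref{t3.1} \emph{using} the uniform Feller property you are in the middle of proving, so using it here is circular. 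The only elementary lower bound is $\|\PPPP_k^Vf\|_\infty\ge c_f e^{k\inf V}$, and after normalising you are left with a factor of order $e^{k\,\mathrm{Osc}_\aA(V)}\|u-u'\|$, which diverges in $k$ unless $V$ is constant.

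To salvage the coupling route you would have to: (i) decompose by the first failure time $\tau=r$; (ii) on $\{\tau=r\}$ factor the weight as $\exp(\sum_{n<r}V(u_n))$ times the future, bounding the first factor by $e^{r\sup V}$ only (not $e^{k\sup V}$) and showing the future contribution can be compared to $\|\PPPP_k^Vf\|_\infty$ at a cost of at most $e^{-r\inf V}$; (iii) conclude that the series $\sum_r e^{r\,\mathrm{Osc}_\aA(V)}\gamma_N^{\,r-1}$ must converge, forcing you to choose $N$ so large that $\gamma_N(R_*)<e^{-\mathrm{Osc}_\aA(V)}$. This last point is the key ingredient missing from your write-up: the cut-off dimension $N$ must be chosen \emph{as a function of $V$}, not just of the phase-space radius. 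The paper encodes exactly this through the requirement $\varkappa>\mathrm{Osc}_\aA(V)$ in~\eqref{4.9}, with $N\ge N_*(R_*,\varkappa)$ from Proposition~\ref{p1.2}, which produces the convergent factor $\sum_{r\ge1}e^{-r(\varkappa-\mathrm{Osc}_\aA(V))}$ in the derivative estimate. Step (ii) is also nontrivial and is handled in the paper by introducing the auxiliary inhomogeneous chain of Section~\ref{s1.3} and conditioning on $\FF_r$; your proposal gives no mechanism for it.

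In short: the coupling template is a legitimate alternative to the paper's analytic derivative estimate and may well be made to work, but the argument as written both assumes the conclusion (via the two-sided bound on $\|\PPPP_k^Vf\|_\infty$) and loses a factor $e^{k\,\mathrm{Osc}_\aA(V)}$ on the failure event, so uniformity in $k$ fails. The fix requires decomposing by the locking time, tracking the exponential weights step by step, and -- crucially -- picking $N=N(V)$ large enough that the high-mode contraction beats $e^{\mathrm{Osc}_\aA(V)}$; none of this is in your proposal.
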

The theorem  will play a key role in the proof of our main results.  
We start its  proof by discussing a Lyapunov--Schmidt type reduction which gives a useful formula for the Markov semigroup applied to functions depending on finitely many Fourier modes and an auxiliary inhomogeneous Markov chain that will be needed in what follows. 

\subsection{Lyapunov--Schmidt reduction}
\label{s1.2}
In this subsection we introduce a Markov family which is defined in the space of sequences and is closely related to system~\eqref{1.1}. Recall that\,\footnote{\,The space~$\MH_N$ is, of course, the same as ${\boldsymbol H}=H^{\Z_0}$. We use, however,  different notation because the components lying in~$H_N$ and~$H_N^\bot$ will have different meanings.}
$$
 \MH_N=\bigl(H_N\times H_N^\bot\bigr)^{\Z_0},
$$
where~$\Z_0$ the set of non-positive integers.
The set~$\MH_N$ is endowed with the Tikhonov topology and the corresponding Borel $\sigma$-algebra. Given a measurable mapping $T_0:\MH_N\to H_N$, define a family of Markov chains in~$\MH_N$ by the relations
\begin{equation} \label{1.11}
 \bg^0=\boldsymbol U, \quad 
 \bg^k=\left(\bg^{k-1}, T(\bg^{k-1})+\binom{\varphi_k}{\psi_k}\right), 
\end{equation}
where $U=\binom{\vvv}{\www}\in\MH_N$ is an initial point and 
$$
\varphi_k={\mathsf P}_N\eta_k, \quad \psi_k=(I-{\mathsf P}_N)\eta_k,
\quad T(\UUU)=\binom{T_0(\UUU)}{0}. 
$$
We shall sometimes write $\bg^k(\UUU)$ to indicate the dependence of the random trajectory on the initial point. 
Let us note that, if Condition~(D) is satisfied with $b_j>0$ for all $j\ge1$, then for any function $\fff\in C_b(\MH_N)$ that can be written in the form
\begin{equation} \label{1.12}
\fff(\UUU)=F(v_{1-k},\dots,v_0), \quad \UUU=\bigl(\sigma_j=\tbinom{v_j}{w_j},j\in\Z_0\bigr),
\end{equation}
where $F:(H_N)^k\to\R$ is a measurable function, we have
\begin{equation} \label{1.13}
\E_\UUU \fff(\bg^k)
=\int_{(H_N)^k}D_k(\UUU,\sigma_{1},\dots,\sigma_k)F(v_1,\dots,v_k)\,
\dd\ell_N(\sigma_1)\dots\dd\ell_N(\sigma_k). 
\end{equation}
Here the subscript~$\UUU$ means that we consider the trajectory starting from~$\UUU$, $\ell_N\in\PP(H_N\times H_N^\bot)$ denotes the direct product of the Lebesgue measure~$\LL_N$ on~$H_N$ and the law of~$\psi_1$ (which is a measure on~$H_N^\bot$), $\sigma_j=\tbinom{v_j}{w_j}$ for $j=1,\dots, k$, and 
\begin{equation} \label{1.14}
D_k(\boldsymbol U,\sigma_1,\ldots,\sigma_k)=\prod_{n=1}^k D(v_n-T_0(\boldsymbol U,\sigma_1,\ldots,\sigma_{n-1})),
\end{equation}
where $D(v)$ denotes the density of the law for~$\varphi_1$ with respect to~${\cal L}_N$:  
\begin{equation} \label{1.15}
D(v)=\prod_{i=1}^N b^{-1}_i p_i(b^{-1}_ix_i), 
\quad v=(x_1,\ldots,x_N)\in H_N.
\end{equation}

We now prove a refined version of the Lyapunov--Schmidt type reduction established in Section~3 of~\cite{KS-cmp2000}. 
Given an integer~$N\ge1$ and positive numbers~$R$ and~$b$, define
$$
B_{R,b}:=B_{H_N} (R) \times B_{H _N^\bot}(b), \quad 
\boldsymbol B_{R,b}:=(B_{R,b})^{\Z_0}.
$$
If $\WW_0:\boldsymbol B_{R,b}\to H_N^\bot$ is a continuous mapping, then we extend~$\WW_0$ to the entire space~$\MH_N$ by setting it to zero outside~$\boldsymbol B_{R,b}$. Given such a mapping, we define $T_0:\MH_N\to H_N$ by the relation
\begin{equation} \label{1.19}
T_0(\boldsymbol v, \boldsymbol \psi)
={\mathsf P}_NS(v_0+\WW_0(\boldsymbol v,\boldsymbol\psi)).
\end{equation}

\begin{proposition} \label{p1.2}
Let Conditions {\rm(A)--(D)} be satisfied and let $b=\sqrt\BBBB$. Then for any $R>0$ and $\varkappa>0$ there is an integer $N_*\ge1$ such that for any $N\ge N_*$ one can find a constant $C>0$ and a continuous mapping 
$$
\WW_0:\boldsymbol B_{R,b}\to H_N^\bot, \quad 
\UUU=(\vvv,\ppsi)\mapsto w_0,
$$ 
possessing the following properties.

\medskip
{\bf Lipschitz continuity.} 
For any $\UUU_i=(\vvv_i,\ppsi_i)\in\BBB_{R,b}$, $i=1,2$, we have
\begin{equation} \label{1.16}
\|\WW_0(\UUU_1)-\WW_0(\UUU_2)\|
\le C\sup_{j\le 0}
\bigl\{e^{\varkappa j}\bigl(\|v_{1j}-v_{2j}\|+\|\psi_{1j}-\psi_{2j}\|\bigr)\bigr\},
\end{equation}
where $v_{1j}$ stands for the $j^{\text{th}}$ element of~$\vvv_1$, and we used similar notation for the other sequences. 

\smallskip
{\bf Regularity.}
For any $j\le 0$, the mapping $\WW_0(\vvv,\ppsi)$ is continuously differentiable with respect to~$\varUpsilon_j=\binom{v_j}{\psi_j}$ in the closed ball~$B_{R,b}$.

\smallskip
{\bf Reduction.} 
Let $r\ge0$ be such that $\aA(B_H(r))\subset B_H(R)$ and let $u\in\aA(B_H(r))$ be a vector written as $u=v_0+\WW_0(\UUU)$ for some $\UUU=(\vvv,\ppsi)\in\BBB_{R,b}$. Then the trajectory~$\bg^k=\bg^k(\UUU)$ defined by~\eqref{1.11}, \eqref{1.19} is such that 
\begin{equation} \label{1.17}
\IP\bigl\{\bg^k(\UUU)\in\BBB_{R,b}\mbox{ for all $k\ge0$}\bigr\}=1. 
\end{equation}
Moreover, if a bounded measurable function $f:H^k\to\R$ is of the form 
$$
f(u_1,\dots,u_k)=F({\mathsf P}_Nu_1,\dots,{\mathsf P}_Nu_k),
\quad u_1,\dots,u_k\in H,
$$ 
where $F:(H_N)^k\to\R$ is a measurable function, then
\begin{equation} \label{1.18}
\E_u f(u_1,\dots,u_k)=\E_\UUU\fff(\bg^k)\quad\mbox{for $k\ge0$},
\end{equation}
where $\fff\in C_b(\MH_N)$ is defined as 
$\fff(\vvv,\ppsi)=F(v_{1-m},\dots,v_0)$, and the subscripts~$u$ and~$\UUU$ in~\eqref{1.18} mean that we consider trajectories of~\eqref{1.1} and~\eqref{1.11} starting from~$u$ and~$\UUU$, respectively. 
\end{proposition}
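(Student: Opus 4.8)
The plan is to construct the slaving map $\WW_0$ as a uniform limit of finite-horizon ``backward'' solution maps for the high-mode part of~\eqref{1.1}, and then to obtain the Reduction property from a pathwise coupling of the two Markov families~\eqref{1.1} and~\eqref{1.11} driven by the same noise. First I would fix $N_*=N_*(R,\varkappa)$, using Condition~(C) with $u_2=0$ and the fact that $S(0)\in H$, so large that for every $N\ge N_*$ one has $\gamma_N(R+2b)\,e^{\varkappa}<1$ and $\|(I-{\mathsf P}_N)S(u)\|\le b$ for $u\in B_H(R+2b)$. For $n\ge1$ and a history $\UUU=\bigl(\binom{v_j}{\psi_j},j\le0\bigr)\in\BBB_{R,b}$, set $w^{(n)}_{-n}=0$ and define recursively
\[
w^{(n)}_j=(I-{\mathsf P}_N)S\bigl(v_{j-1}+w^{(n)}_{j-1}\bigr)+\psi_j,\qquad -n<j\le0,
\]
and put $\WW_0^{(n)}(\UUU)=w^{(n)}_0$. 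An easy induction gives $\|w^{(n)}_j\|\le2b$, so all arguments of $S$ stay in $B_H(R+2b)$ and~(C) applies there with contraction factor $\gamma_N(R+2b)<1$. Comparing $w^{(n)}$ with $w^{(n+1)}$ on the common window and propagating the squeezing estimate from time $-n$ yields $\|\WW_0^{(n+1)}(\UUU)-\WW_0^{(n)}(\UUU)\|\le 2b\,\gamma_N(R+2b)^{\,n}$ uniformly in $\UUU$; hence $\WW_0^{(n)}$ converges uniformly on $\BBB_{R,b}$ to a continuous map $\WW_0$, extended by $0$ to $\MH_N$. Passing to the limit in the recursion gives the consistency identity
\[
\WW_0(\UUU)=(I-{\mathsf P}_N)S\bigl(v_{-1}+\WW_0(\theta\UUU)\bigr)+\psi_0,\qquad\theta\bigl(\sigma_j,j\le0\bigr):=\bigl(\sigma_{j-1},j\le0\bigr),
\]
which is the algebraic relation driving the rest of the proof.

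The two a priori estimates then follow from the same propagation argument. For $\UUU_1,\UUU_2\in\BBB_{R,b}$, keeping track of the differences of the $v_j$'s and of the $\psi_j$'s, one gets $\|\WW_0(\UUU_1)-\WW_0(\UUU_2)\|\le\sum_{i\ge0}\gamma_N(R+2b)^i\bigl(\gamma_N(R+2b)\,d_{-i-1}+d_{-i}\bigr)$ with $d_j=\|v_{1j}-v_{2j}\|+\|\psi_{1j}-\psi_{2j}\|$; writing $M=\sup_{j\le0}e^{\varkappa j}d_j$ (finite because $d_j$ is bounded), so that $d_{-i}\le Me^{\varkappa i}$, and using $\gamma_N(R+2b)e^{\varkappa}<1$, summing the geometric series produces the weighted Lipschitz bound~\eqref{1.16} with $C=C(N)$. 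For the regularity statement I differentiate the recursion with respect to the $\ell$-th coordinate block $\binom{v_\ell}{\psi_\ell}$: by Condition~(A) each $\WW_0^{(n)}$ is $C^1$ in this block, and the derivatives satisfy a linear recursion with propagation operator $(I-{\mathsf P}_N)DS$, of norm $\le\gamma_N(R+2b)<1$ on $B_H(R+2b)$ by~(C); hence the derivatives converge uniformly and $\WW_0$ is $C^1$ in $\binom{v_\ell}{\psi_\ell}$ on $B_{R,b}$.

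For the Reduction, I would put the trajectory $\{u_j\}$ of~\eqref{1.1} issued from $u=v_0+\WW_0(\UUU)$ and the trajectory $\bg^k=\bg^k(\UUU)$ of~\eqref{1.11} on the same probability space with common driving noise $\varphi_j={\mathsf P}_N\eta_j$, $\psi_j=(I-{\mathsf P}_N)\eta_j$, and prove by induction on $k$ the three claims: $\bg^k\in\BBB_{R,b}$; the $H_N$-component of the $0$-th entry of $\bg^k$ equals ${\mathsf P}_Nu_k$; and $\WW_0(\bg^k)=(I-{\mathsf P}_N)u_k$ (so the state reconstructed from $\bg^k$ is $u_k$). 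The base case $k=0$ is the hypothesis $u=v_0+\WW_0(\UUU)$ together with $\UUU\in\BBB_{R,b}$. For the inductive step, since $u_0=u\in\aA(B_H(r))$, $\eta_j\in\KK$ a.s., and $S(\aA(B_H(r)))+\KK\subset\aA(B_H(r))$, one has $u_{k-1}\in\aA(B_H(r))$ and hence $u_k=S(u_{k-1})+\eta_k\in\aA(B_H(r))\subset B_H(R)$; by~\eqref{1.19} and the inductive hypothesis $T_0(\bg^{k-1})={\mathsf P}_NS(u_{k-1})$, so the new $H_N$-entry $T_0(\bg^{k-1})+\varphi_k={\mathsf P}_Nu_k$ has norm $\le R$, the new $H_N^\bot$-entry $\psi_k$ has norm $\le\|\eta_k\|\le\sqrt\BBBB=b$ (using $|\xi_{jk}|\le1$), and the remaining entries are inherited from $\bg^{k-1}$, so $\bg^k\in\BBB_{R,b}$; applying the consistency identity to $\bg^k$, and noting $\theta\bg^k=\bg^{k-1}$, gives $\WW_0(\bg^k)=(I-{\mathsf P}_N)S(u_{k-1})+\psi_k=(I-{\mathsf P}_N)u_k$. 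This proves~\eqref{1.17}, and since the $H_N$-components of the entries of $\bg^k$ in positions $1-k,\dots,0$ are ${\mathsf P}_Nu_1,\dots,{\mathsf P}_Nu_k$, one gets $\fff(\bg^k)=F({\mathsf P}_Nu_1,\dots,{\mathsf P}_Nu_k)=f(u_1,\dots,u_k)$ pathwise, and~\eqref{1.18} follows by taking expectations.

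The fixed-point construction and the two estimates are routine once~(C) is available; the delicate point is the Reduction step, where one must exploit that in~\eqref{1.11} the \emph{stored} high-mode coordinates are the bare noises $\psi_j$, while the \emph{actual} high modes are recovered only through $\WW_0$, and then check --- while handling the truncation of $\WW_0$ outside $\BBB_{R,b}$ --- that the backward consistency identity of the first step is precisely what propagates the identity ``reconstructed state $=u_k$'' along the forward dynamics of~\eqref{1.11}. The one quantitative choice tying the two parts together is that of $N_*$ ensuring $\gamma_N(R+2b)e^{\varkappa}<1$ and keeping all intermediate states in the fixed ball $B_H(R+2b)$.
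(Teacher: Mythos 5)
Your proposal is correct and ultimately rests on the same engine as the paper's proof, namely the contraction furnished by Condition~(C) once $N$ is large enough that $\gamma_N(R+2b)\,e^{\varkappa}<1$. The difference is mostly in how that contraction is deployed. The paper treats the backward ``slaving'' equation \eqref{E:2.4} as a black box, citing Theorem~3.1 and inequality~(3.7) of \cite{KS-cmp2000} for existence, uniqueness, and the weighted Lipschitz estimate~\eqref{1.16}, and dismisses~\eqref{1.17}--\eqref{1.18} as consequences of the construction; the only part it actually proves is the $C^1$-regularity, which it obtains by truncating $S$ with a cut-off $\chi$ so that the modified equation~\eqref{1.23} is globally uniquely solvable in $L^\infty(\Z_0,H_N^\bot)$ and the implicit function theorem applies (with $\|G_1\|\le1/2$ coming from~\eqref{1.22}). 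You instead re-derive the fixed point from scratch as a uniform limit of finite-horizon backward iterations $\WW_0^{(n)}$, which buys you self-containedness and lets you bypass the cut-off entirely: the regularity follows by differentiating the recursion for $\WW_0^{(n)}$ and observing that the derivative recursion is contracting with the same factor, so the derivatives converge uniformly on the convex set $B_{R,b}$ and the limit is $C^1$. Both regularity arguments are valid; the paper's is a single application of a standard theorem once the ambient Banach space is set up, while yours is more elementary but requires a (routine) ``differentiation through a uniform limit'' step. You also spell out the Reduction as an explicit inductive coupling (reconstructed state equals $u_k$, $\bg^k\in\BBB_{R,b}$, and the consistency identity propagates), which is a useful expansion of what the paper leaves implicit. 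The one point to be careful about in your write-up is that the consistency identity $\WW_0(\UUU)=(I-{\mathsf P}_N)S(v_{-1}+\WW_0(\theta\UUU))+\psi_0$ is only established for $\UUU\in\BBB_{R,b}$ (you extend $\WW_0$ by zero outside), so one must verify $\bg^k\in\BBB_{R,b}$ \emph{before} invoking it in the inductive step --- which your induction indeed does, but the order of the two claims matters.
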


Let us note that the regularity of~$\WW_0$ with respect to~$(v_j,\psi_j)$ and inequality~\eqref{1.16} imply that
\begin{align}\label{E:2.6}
\left\|\frac{\p \WW_0}{\p \varUpsilon_{j}} (\bg)\right\|_{L(H,H _N^\bot)}
\le C e^{\varkappa j},  
\end{align} 
for all $\bg=\binom{\vvv}{\ppsi}\in \BBB_{R,b}$ and $j\le0$. 
 
\begin{proof}[Scheme of the proof of Proposition~\ref{p1.2}]
Let us consider the equation
\begin{align}\label{E:2.4}
\tilde v_k={\mathsf Q}_NS(v_{k-1}+\tilde v_{k-1} )+\psi_k, \quad  k\le 0,
\end{align}
where  ${\mathsf Q}_N=I-{\mathsf P}_N$, and $\{(v_j,\psi_j),j\le0\}\in\BBB_{R,b}$ is a given vector. 
It follows from Theorem~3.1 of~\cite{KS-cmp2000} that if~$N\ge1$ is sufficiently large, then~\eqref{E:2.4} has a unique solution $\boldsymbol{\tilde v}\in (B_{H_N^\bot}(\widetilde R))^{\Z_0}$, where $\widetilde R$ depends on~$R$. We denote by~$\WW_0$ the mapping that takes $(\vvv,\ppsi)$ to the zeroth component of~$\boldsymbol{\tilde v}$. Then~\eqref{1.16} follows immediately from inequality~(3.7) of~\cite{KS-cmp2000}, while relations~\eqref{1.17} and~\eqref{1.18} are consequences of the construction. Thus, it remains to prove the regularity of~$\WW_0$ with respect to~$(v_j,\psi_j)$. This property would follow immediately from the implicit function theorem if~\eqref{E:2.4} was a uniquely solvable equation in a Banach space. Since this is not the case, we apply the following simple argument. 

Let $\chi:H\to\R$ be a smooth cut-off  function equal to~$1$ on the ball $B_H(R+\widetilde R)$. Choosing~$N$ sufficiently large, we derive from~\eqref{1.5} that
\begin{equation} \label{1.22}
\|(I-{\mathsf P}_N)((\chi S)(u_1)-(\chi S)(u_2))\|\le \frac12\|u_1-u_2\|\quad
\mbox{for $u_1,u_2\in H$}. 
\end{equation}
It follows that, for any $(\vvv,\ppsi)\in L^\infty(\Z_0,H_N\times H_N^\bot)$, the equation
\begin{equation}\label{1.23}
\tilde v_k={\mathsf Q}_N(\chi S)(v_{k-1}+\tilde v_{k-1} )+\psi_k, \quad  k\le 0,
\end{equation}
has a unique solution $\boldsymbol{\tilde v}\in L^\infty(\Z_0,H_N^\bot)$. This solution coincides with that of~\eqref{E:2.4} for $(\vvv,\ppsi)\in\BBB_{R,b}$. Since the mapping entering the right-hand side of~\eqref{1.23} is $C^1$-smooth in appropriate spaces, the unique solution~$\boldsymbol{\tilde v}$ of Eq.~\eqref{1.23} will be a $C^1$-smooth function of~$(\vvv,\ppsi)$ if we show that Eq.~\eqref{1.23} can be solved locally with the help of the implicit function theorem. In particular, the dependence of the zeroth component of~$\boldsymbol{\tilde v}$ on~$(v_j,\psi_j)$ will be~$C^1$ for any $j\le0$. 

To prove the applicability of the implicit function theorem, let us define a mapping $G:L^\infty(\Z_0,H_N\times H_N^\bot)\to L^\infty(\Z_0,H_N^\bot)$ by the formula
$$
G(\vvv,\boldsymbol{\tilde v})=\bigl(\tilde v_k-{\mathsf Q}_N(\chi S)(v_{k-1}+\tilde v_{k-1} )-\psi_k, k\in \Z_0\bigr),
$$
where $\vvv=(v_k,k\in\Z_0)$ and $\boldsymbol{\tilde v}=(\tilde v_k,k\in\Z_0)$. Equation~\eqref{1.23} is satisfied if and only if $G(\vvv,\boldsymbol{\tilde v})=0$. The derivative of~$G$ with respect to~$\boldsymbol{\tilde v}$ has the form
$$
G'(\vvv,\boldsymbol{\tilde v})=I-G_1(\vvv,\boldsymbol{\tilde v}),
$$
where $I$ denotes the identity operator in $L^\infty(\Z_0,H_N^\bot)$ and $G_1(\vvv,\boldsymbol{\tilde v})$ is a linear operator in the same space whose norm does not exceed $1/2$ in view of~\eqref{1.22}. Thus, one can apply the implicit function theorem. This completes the proof of the proposition. 
\end{proof}

\subsection{An auxiliary family of Markov chains}
\label{s1.3}
In what follows, we shall need an auxiliary inhomogeneous Markov family in the space~$\MH_N$. Namely, let us fix an integer $r\ge1$, an initial point $\UUU\in\MH_N$,  and a measurable mapping $T_0:\MH_N\to H_N$ and define a random sequence $\{\bg_r^k=\bg_r^k(\UUU),k\ge0\}$ by the relations
\begin{equation} \label{1.24}
 \bg_r^0=\boldsymbol U, \quad 
 \bg_r^k=\left(\bg_r^{k-1}, T(\bg_r^{k-1})+\binom{\varphi_{k,r}}{\psi_k}\right), 
\end{equation}
where $T$ is the same as in~\eqref{1.11}, $\varphi_{k,r}=\varphi_k$ for $k\ne r$, and~$\varphi_{r,r}$ is a random variable in~$H_N$ that is independent of~$\{\eta_k, k\ge1\}$ and is uniformly distributed on the support of the law for~$\varphi_1={\mathsf P}_N\eta_1$. The latter means that
$$
\DD(\varphi_{r,r})=C\,I_{{\mathsf P}_N\KK}(x)\LL_N(dx), \quad x\in H_N,
$$
where $C>0$ is a normalising constant and~$\KK=\supp\DD(\eta_1)$. Note that $\{\bg_r^k(\UUU),\UUU\in\MH_N,k\ge0\}$ is  a family of  inhomogeneous Markov chains in~$\MH_N$ 
satisfying~$\bg_r^k(\UUU)=\bg^k(\UUU)$ for $k\le r-1$ and for $k\ge r+1$. In particular, the Markov property implies that, if $g:H^m\to\R$ is a bounded measurable function, then
\begin{equation} \label{1.25}
\E\bigl(g\bigl(\bg_r^{r+1}(\UUU),\dots,\bg_r^{r+m}(\UUU)\bigr)\,|\,\FF_r\bigr)
=\E\,g\bigl(\bg^1(\VVV),\dots,\bg^m(\VVV)\bigr)\big|_{\VVV=\bg_r^r(\UUU)}\,,
\end{equation}
where ${\cal F}_r$ is is the $\sigma$-algebra generated by $\{ \bg^{j}\}_{j\leq r}$.

Let~$\MU\subset\MH_N$ be the domain of attainability from zero for~$\{\bg^k\}$. That is, we define
$$
\MU_0=\{\mathbf0\}, \qquad 
\MU_k=\bigl\{(\bg,T(\bg)+\KK),\bg\in\MU_{k-1}\bigr\}\quad\mbox{for $k\ge1$},
$$
where $\mathbf0\in\MH_N$ stands for the zero element and~$\KK$ is the support of~$\DD(\eta_1)$, and denote by~$\MU$ the closure of the union of~$\MU_k$, $k\ge0$, in the Tikhonov topology of~$\MH_N$. Since $\supp\DD(\varphi_{k,r})=\supp\DD(\varphi_k)$ and, hence, the support of the law for~$\binom{\varphi_{k,r}}{\psi_k}$ is equal to~$\KK$, the domain of attainability from zero for~$\{\bg_{r}^k\}$ coincides with~$\MU$. In particular, $\MU$~is an invariant subset for~$\{\bg_r^k\}$. 

We shall also need the following property of~$\MU$ established in Section~3.2 of~\cite{KS-cmp2000}. Let $R_*>0$ be such that $b=\sqrt\BBBB\le R_*$ and $\aA\subset B_H(R_*)$. Then
\begin{equation} \label{1.26}
\MU\subset \bigl(B_H(R_*)\bigr)^{\Z_0}.
\end{equation}

\subsection{Proof of Theorem~\ref{main-tech}}
\label{s4.3}
{\it Step 1}. 
We first reduce the proof of the uniform Feller property to a similar question for the Markov family~$\{\bg^k\}$ with the state space~$\MH_N$ (see Section~\ref{s1.2}). Recall that the relationship between~$\{\bg^k\}$ and~$\{u_k\}$  is described in Proposition~\ref{p1.2} and the domain of attainability from zero~$\MU$ was defined in Section~\ref{s1.3}. Given a function $V\in C_b(H_N)$, we introduce a semigroup acting on~$C(\MU)$ by the formula
$$
(\bPPPP_k^V\fff)(\bg)=\E_\bg\exp\biggl(\,\sum_{n=1}^kV\bigl({\mathsf P}_N\varUpsilon_{1-n}^k)\bigr)\biggr)\fff(\bg^k) ,
\quad k\ge1,
$$
where $\bg^k=(\varUpsilon_j^k, j\in\Z_0)$ and $\fff\in C(\MU)$. To any function $f\in C(H_N)$ we associate $\fff\in C(\MU)$ defined by 
\begin{equation} \label{4.8}
\fff(\bg)=f(v_0), \quad 
\bg=\tbinom{\vvv}{\ppsi}=\bigl(\tbinom{v_j}{\psi_j},j\in\Z_0\bigr).
\end{equation}
We claim that the uniform Feller property for $\{\PPPP_k^V\}$ with the determining class~$\CC$ of  Theorem \ref{main-tech} will be established if we prove the following assertion:

\begin{description}
\item[(P)] 
{\sl Let $R_*>0$ be the constant for which~\eqref{1.26} holds, let $\varkappa>0$ satisfy the inequality
\begin{equation} \label{4.9}
\varkappa>\Osc_\aA(V):=\sup_\aA V-\inf_\aA V, 
\end{equation}
and let $N_*=N_*(R_*,\varkappa)\ge1$ be the integer constructed in Proposition~\ref{p1.2}. Then, for any integer $N\ge N_*$, any function $V\in C(\aA)$ representable in the form~\eqref{4.4}, and any function $\fff\in C(\MU)$ of the form~\eqref{4.8} with $f\in C_+(H_N)$, the sequence $\{\|\bPPPP_k^V\fff\|_\infty^{-1}\bPPPP_k^V\fff,k\ge1\}$ is relatively compact in~$C(\MU)$.}
\end{description}
Indeed, let $V\in\VV$,  $f\in \CC$, and let~$\fff$ be the function defined by~\eqref{4.8}. There is no loss of generality in assuming that $N\ge1$ is so large that the conclusion of~(P) holds. Thus, any sequence of  positive integers going to~$+\infty$ contains a subsequence~$\{k_n\}$ such that
\begin{equation} \label{4.10}
\sup_{\bg\in\MU}\,\bigl|\|\bPPPP_{k_m}^V\fff\|_\infty^{-1}\bPPPP_{k_m}^V\fff(\bg)-\|\bPPPP_{k_n}^V\fff\|_\infty^{-1}\bPPPP_{k_n}^V\fff(\bg)\bigr|\to0
\quad\mbox{as $m,n\to\infty$}. 
\end{equation}
On the other hand, if $u\in \aA$ is such that $u=v_0+\WW_0(\bg)$ for some $\bg\in\MU$, then by~\eqref{1.18} we have
\begin{align}
(\bPPPP_k^V\fff)(\bg)
&=\E_\bg\exp\biggl(\,\sum_{n=1}^kV({\mathsf P}_N\varUpsilon_{1-n}^k)\biggr) f(\varUpsilon_0^k) \notag\\
&=\E_u \exp\biggl(\,\sum_{n=1}^kV({\mathsf P}_Nu_n)\biggr) f({\mathsf P}_Nu_k)  =(\PPPP_k^Vf)(u). \label{4.11}
\end{align}
Since the image of~$\MU$ by the mapping $\bg=\bigl(\binom{v_j}{\psi_j},j\in\Z_0\bigr)\mapsto v_0+\WW_0(\bg)$ coincides with~$\aA$ (see Section~3 in~\cite{KS-cmp2000}), it follows from~\eqref{4.10} and~\eqref{4.11} that 
$$
\sup_{u\in\aA}\,\bigl|\|\PPPP_{k_m}^Vf\|_\infty^{-1}\PPPP_{k_m}^Vf(u)-\|\PPPP_{k_n}^Vf\|_\infty^{-1}\PPPP_{k_n}^Vf(u)\bigr|\to0
\quad\mbox{as $m,n\to\infty$}. 
$$
By the  Arzel\`a theorem, what has been established implies that the sequence $\{\|\PPPP_{k}^Vf\|_\infty^{-1}\PPPP_{k}^Vf\}$ is uniformly equicontinuous. Since $V\in\VV$ and $f\in\CC$ were arbitrary, we obtain the required uniform Feller property. 

\smallskip
{\it Step 2}. 
We now prove Property~(P). To this end, it suffices to find positive constants~$C$ and~$c$ such that
\begin{align}\label{E:3.9}
\|\bPPPP_k^V\fff\|_\ty^{-1}  \sup_{\bg\in \MU}\,
\biggl\|\frac{\p \bPPPP_k^V \fff}{\p \varUpsilon_j}(\bg)\biggr\|
\le C e^{cj},
\end{align}
where $\bg=(\varUpsilon_j,j\in\Z_0)$. 
Indeed, let us endow the set~$\MU$ with the metric
$$
d_c(\bg_1,\bg_2)=\sum_{j\in\Z_0}e^{cj}\|\varUpsilon_j^1-\varUpsilon_j^2\|,
\quad \bg_i=\bigl(\varUpsilon_j^i,j\in\Z_0\bigr).
$$
The topology defined by~$d_c$ on~$\MU$ coincides with the Tikhonov topology. Let us set $\bgg_k=\|\bPPPP_k^V\fff\|_\ty^{-1}\bPPPP_k^V \fff$. If we show that~$\bgg_k$ is $C$-Lipschitz continuous on~$\MU$ with respect to the metric~$d_c$ for any $k\ge1$, then the relative compactness of~$\{\bgg_k\}$ in~$C(\MU)$ will follow by the Arzel\`a theorem. 

Assuming that~\eqref{E:3.9} is established, we use the mean value theorem with respect to the variable~$\varUpsilon_j=\binom{v_j}{\psi_j}$ to estimate the Lipschitz constant of~$\bgg_k$:
\begin{align*}
|\bgg_k(\bg_1)-\bgg_k(\bg_2)|
&\le \sum_{j\in\Z_0}\Lip_{j}(\bgg_k)\|\varUpsilon_j^1-\varUpsilon_j^2\|\\
&\le \sum_{j\in\Z_0}Ce^{ cj}\|\varUpsilon_j^1-\varUpsilon_j^2\|
=C\,d_c(\bg_1,\bg_2),
\end{align*}
where $\Lip_j(\bgg_k)$ stands for the Lipschitz constant of~$\bgg_k$ with respect to~$\binom{v_j}{\psi_j}$. This inequality shows that~$\bgg_k$ is $C$-Lipschitz continuous.

\smallskip
{\it Step 3}. Let us prove \eqref{E:3.9}. In view of~\eqref{1.13}, we have
\begin{align}\label{4.12*}
(\bPPPP_k^V\fff)(\bg)=\int
D_k(\bg,\sigma_{1},\dots,\sigma_k)\exp\biggl(\,\sum_{n=1}^kV(v_n)\biggr)f(v_k)\,\dd\ell_N(\sigma_1)\dots\dd\ell_N(\sigma_k).
\end{align}
Here and henceforth, the integrals without limits are taken over $(B_{R,b})^k$ and, as before, we write $\sigma_n=\tbinom{v_n}{w_n}$ for $n=1,\dots, k$. Taking the derivative of the above relation with respect to~$\varUpsilon_{j}$ and defining~$\eell_N^k$ as the direct product of~$k$ copies of~$\ell_N$, we get 
\begin{equation} \label{4.13}
\frac{\bPPPP_k^V\fff}{\p\varUpsilon_j}(\bg)
=\int
\frac{D_k(\bg,\ssigma_k)}{\p\varUpsilon_j}
\exp\biggl(\,\sum_{n=1}^kV(v_n)\biggr)f(v_k)\,
\dd\eell_N^k(\ssigma_k),
\end{equation}
where we set $\ssigma_n=(\sigma_{1},\dots,\sigma_n)$. Note that
$$
\frac{\p D_k(\bg,\ssigma_k)}{\p\varUpsilon_j}
=\sum_{r=1}^kD_{kr}(\bg,\ssigma_r)
\biggl\langle\frac{\p D}{\p v}\bigl(v_r-T_0(\bg,\ssigma_{r-1})\bigr),
\frac{\p T_0}{\p \varUpsilon_j}(\bg,\ssigma_{r-1})\biggr\rangle,
$$
where
$$
D_{kr}(\bg,\ssigma_k)=\prod_{r\neq n=1}^k 
D\bigl(v_n-T_0(\bg,\ssigma_{n-1})\bigr).
$$ 
In view of~\eqref{1.16}, \eqref{E:2.6}, and the regularity of~$S$, we have
$$
\biggl\|\frac{\p T_0}{\p \varUpsilon_{ j}} (\bg,\ssigma_{r-1})\biggr\|_{L(H,H_N)}
\le C_1e^{ \varkappa(j-r)} \quad 
\text{for any $(\bg,\ssigma_{r-1})\in{\boldsymbol B}_{R,b}$},
$$
where we denote by $C_i$ unessential positive constants. 
Substituting the above relations into~\eqref{4.13}, we derive 
\begin{equation} \label{4.14} 
\biggl\|\frac{\bPPPP_k^V\fff}{\p\varUpsilon_j}(\bg)\biggr\|
\le C_2(N)\sum_{r=1}^ke^{ \varkappa(j-r)}
\int D_{kr}(\bg,\ssigma_k)\exp\biggl(\,\sum_{n=1}^kV(v_n)\biggr)f(v_k)\,
\dd\eell_N^k(\ssigma_k).
\end{equation}
Let us recall that, given an integer $N\ge1$, the inhomogeneous family of Markov chains~$\{\bg_r^k=(\varUpsilon_{r,j}^k, j\in\Z_0)\}$ was defined in Section~\ref{s1.3}. The integral on the right-hand side of~\eqref{4.14} can be rewritten as
$$
I_{kr}(\bg):=\frac{1}{\LL_N({\mathsf P}_N\KK)}\,\E_\bg\biggl\{\exp\biggl(\,\sum_{n=1}^k
V\bigl({\mathsf P}_N\varUpsilon_{r,n}^k\bigr)\biggr)
f\bigl({\mathsf P}_N\varUpsilon_{r,k}^k\bigr)\biggr\}.
$$
Conditioning on~$\FF_r$ and using the Markov property~\eqref{1.25}, we can estimate $I_{kr}$ as follows:
\begin{align*}
|I_{kr}(\bg)|&\le\frac{\|f\|_\infty\exp(r\sup_\aA V)}{\LL_N({\mathsf P}_N\KK)}
\E_\bg\E_\bg\biggl\{\exp\biggl(\,\sum_{n=r+1}^k
V\bigl({\mathsf P}_N\varUpsilon_{r,n}^k\bigr)\biggr)\,\bigg|\,\FF_r\biggr\}\\
&=C_3\exp(r\sup\nolimits_\aA V)\,\E_\bg\E_{\bg_r^r}
\biggl\{\exp\biggl(\,\sum_{n=1}^{k-r}
V\bigl({\mathsf P}_N\varUpsilon_{n}^k\bigr)\biggr)\biggr\}\\
&\le C_3\exp(r\sup\nolimits_\aA V)\,
\sup_{\UUU\in\MU}\E_\UUU\biggl\{\exp\biggl(\,\sum_{n=1}^{k-r}
V\bigl({\mathsf P}_N\varUpsilon_{n}^k\bigr)\biggr)\biggr\}\\
&\le C_3\exp\bigl(r\Osc_\aA(V)\bigr)\,\|\bPPPP_k^V{\mathbf1}\|_\infty.
\end{align*}
Substituting this estimate into~\eqref{4.14}, we obtain
\begin{equation*} 
\biggl\|\frac{\bPPPP_k^V\fff}{\p\varUpsilon_j}(\bg)\biggr\|
\le C_4(N)\,e^{\varkappa j}
\|\bPPPP_k^V{\mathbf1}\|_\infty
\sum_{r=1}^ke^{-r(\varkappa-\Osc_\aA(V))}.
\end{equation*}
Recalling~\eqref{4.9}, we see that
\begin{equation} \label{4.15}
\sup_{\bg\in\MU}\,\biggl\|\frac{\bPPPP_k^V\fff}{\p\varUpsilon_j}(\bg)\biggr\|
\le C_5(N)e^{\varkappa j}\|\bPPPP_k^V{\mathbf1}\|_\infty.
\end{equation}
On the other hand, since $\fff$ is continuous and positive on the compact set~$\MU$, it can be minorised by a constant $c_\fff>0$. It follows that
$$
\bPPPP_k^V\fff(\bg)\ge c_\fff(\bPPPP_k^V{\mathsf1})(\bg),
$$
and  we conclude that 
$$
\|\bPPPP_k^V\fff\|_\infty\ge c_\fff\|\bPPPP_k^V{\mathsf1}\|_\infty. 
$$
Comparing this with~\eqref{4.15}, we obtain inequality~\eqref{E:3.9} with $C=C_5(N)/c_\fff$ and $c=\varkappa$. The proof of Theorem 
\ref{main-tech} is complete.

\section{Proof of  Theorem \ref{main}}
\label{s4}
We shall prove Theorem \ref{main} by verifying Property 1 (the existence of a limit) and Property 2 (uniqueness of the equilibrium state) 
of Section \ref{s2.3}. 

Let $P_k^V(u, \Gamma)$, $\{\PPPP _k^V\}$, $\VV$, and ${\cal C}$  be as in Theorem \ref{main-tech}. 
For any $V\in C(\aA)$,
$$
P_k^V(u,\cdot)\ge e^{-k\|V\|_\infty}P_k(u,\cdot) 
\quad\mbox{for any $u\in\aA$}.
$$
Since $P_k(u,\Gamma)$ is uniformly irreducible (e.g., see Section~5 of~\cite{KS-cmp2000} for a proof of a similar assertion in a more complicated setting), so is $P_k^V(u,\Gamma)$. By Theorem~\ref{main-tech}, for any $V\in {\cal V}$  the semigroup~$\{\PPPP _k^V\}$ possesses the uniform Feller property for  the determining class~${\cal C}$. Thus, for $V\in {\cal V}$, Theorem~\ref{t3.1} holds for 
the semigroup $\{\PPPP _k^V\}$ and the class ${\cal C}$. 

We  now turn to the proof of Property~1 and the existence of the limit~\eqref{E:3.1*}.
Theorem~\ref{t3.1} implies that for  any $V\in {\cal V}$ there is  $h_V\in C_+(\aA)$  and a constant $\la_V>0$ such that 
$$
\la_V^{-k}\PPPP _k^{V} \mathbf1 \to h_V 
\quad\text{in $C(\aA)$ as $k\to\infty$}.
$$
It follows that for  $V\in {\cal V}$
\begin{align}\label{E:3.4}
Q(V)=\lim_{k\to+\ty} \frac{1}{k}\log (\PPPP _k^V \mathbf1)(u)=\log \lambda_V
\end{align}  
uniformly in~$u\in  \aA$. The estimate 
\begin{align*}
(\PPPP _k^{V_1} \mathbf1)(u)
&=\E_u\exp\biggl(\,\sum_{n=1}^kV_1(u_n)\biggr)
\le e^{k\|V_1-V_2\|_\infty}\,\E_u\exp\biggl(\,\sum_{n=1}^kV_2(u_n)\biggr)\\
&=e^{k\|V_1-V_2\|_\infty}\,(\PPPP _k^{V_2} \mathbf1)(u),
\end{align*}
which holds for  any $V_1, V_2\in C(\aA)$, implies
\begin{align}\label{E:3.6}
\biggl| \frac{1}{k}\log (\PPPP_k^{V_1}\mathbf)(u)
-\frac{1}{k}\log (\PPPP_k^{V_2} \mathbf1)(u)\biggr|
\le \|V_1 -V_2 \|_\ty\quad\text{for $k\ge1$, $u\in \aA$}. 
\end{align}
Hence,  (\ref{E:3.4}) holds for all $V\in C(\aA)$, the limit is uniform in $u \in \aA$, and 
\begin{align}\label{E:3.6*}
|Q(V_1)-Q(V_2)| \le\|V_1 -V_2\|_\ty\quad
\mbox{for $V_1,V_2\in C(\aA)$}. 
\end{align}
The existence of the  limit~\eqref{E:3.1*} for an arbitrary $\aA$-valued initial function~$u_0$ now follows  by integration with respect to the law of~$u_0$.  

\medskip
We now turn to Property~2. We shall show 
that  for any $V\in \VV$, there is only one equilibrium state $\sigma_V\in \PP(\aA)$ for $Q(V)$. We begin with three auxiliary lemmas. 

\smallskip
The functional $Q :C(\aA)\to\R$ is $1$-Lipschitz continuous and convex. Let $I: \MM(\aA)\to \R$ be its Legendre transform. A proof of the following result can be found in Section~6.5.1 of~\cite{ADOZ00}.

\begin{lemma}\label{L:DV}
For any $\sigma\in\PP(\aA)$, we have
\begin{align}\label{E:infb}
I(\sigma)=-\inf_{g\in C_+(\aA)} \biggl\langle\log \biggl(\frac{\PPPP_1g}{g}\biggr),\sigma\biggr\rangle.
\end{align} 
\end{lemma}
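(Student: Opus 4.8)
The plan is to sandwich $I(\sigma)$ between the two sides of the claimed identity. Write
$$J(\sigma):=\sup_{g\in C_+(\aA)}\lag\log g-\log\PPPP_1g,\sigma\rag=-\inf_{g\in C_+(\aA)}\lag\log(\PPPP_1g/g),\sigma\rag,$$
so that the assertion is $I(\sigma)=J(\sigma)$. On the $I$-side I will use Legendre biduality: since $Q:C(\aA)\to\R$ is convex, finite and $1$-Lipschitz and $Q(V)=\sup_{\sigma'\in\PP(\aA)}(\lag V,\sigma'\rag-I(\sigma'))$ (recalled just above), one has $I(\sigma)=\sup_{V\in C(\aA)}(\lag V,\sigma\rag-Q(V))$ for every $\sigma\in\PP(\aA)$. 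On the $J$-side I will use the principal eigenfunction $h_V$ provided by Theorems~\ref{main-tech} and~\ref{t3.1}.

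First I would prove $J(\sigma)\le I(\sigma)$. Fix $g\in C_+(\aA)$; since $\aA$ is compact and the chain~\eqref{1.1} is Feller, $\PPPP_1g$ is again continuous and strictly positive on $\aA$, so $V:=\log g-\log\PPPP_1g\in C(\aA)$ is well defined. The key remark is that $\PPPP_1g$ is a fixed point of $\PPPP_1^V$: by~\eqref{2.18}, $(\PPPP_1^Vf)(u)=\int P_1(u,\dd v)\,f(v)\,g(v)/(\PPPP_1g)(v)$, and taking $f=\PPPP_1g$ gives $\PPPP_1^V(\PPPP_1g)=\PPPP_1g$, hence $\PPPP_k^V(\PPPP_1g)=\PPPP_1g$ for all $k$ by the semigroup property. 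Since $\PPPP_k^V$ is a positive (monotone) operator and $\mathbf 1\le(\inf_\aA\PPPP_1g)^{-1}\PPPP_1g$ pointwise, we obtain $\PPPP_k^V\mathbf 1\le(\inf_\aA\PPPP_1g)^{-1}\|g\|_\infty$ uniformly in $k$, so $Q(V)=\lim_k k^{-1}\log(\PPPP_k^V\mathbf 1)(u)\le 0$. Therefore $I(\sigma)\ge\lag V,\sigma\rag-Q(V)\ge\lag\log g-\log\PPPP_1g,\sigma\rag$, and taking the supremum over $g\in C_+(\aA)$ gives $I(\sigma)\ge J(\sigma)$.

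Next I would prove $I(\sigma)\le J(\sigma)$. For $V\in\VV$, Theorems~\ref{main-tech} and~\ref{t3.1} yield $h_V\in C_+(\aA)$ and $\lambda_V>0$ with $\PPPP_1^Vh_V=\lambda_Vh_V$, and $\log\lambda_V=Q(V)$ by~\eqref{E:3.4}. Put $g:=h_Ve^V\in C_+(\aA)$; then $\PPPP_1g=\PPPP_1^Vh_V=\lambda_Vh_V$, so $\log g-\log\PPPP_1g=V-Q(V)$ and, as $\sigma$ is a probability measure, $\lag\log g-\log\PPPP_1g,\sigma\rag=\lag V,\sigma\rag-Q(V)$. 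Hence $J(\sigma)\ge\lag V,\sigma\rag-Q(V)$ for every $V\in\VV$. Since $\VV$ is dense in $C(\aA)$ and $W\mapsto\lag W,\sigma\rag-Q(W)$ is continuous for the sup-norm (using the $1$-Lipschitzness of $Q$, cf.~\eqref{E:3.6*}), the supremum over $V\in\VV$ equals the supremum over all $W\in C(\aA)$, which is $I(\sigma)$. Thus $J(\sigma)\ge I(\sigma)$, and combining with the previous paragraph completes the proof.

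The only delicate point is the bound $Q(V)\le 0$ for $V=\log g-\log\PPPP_1g$: this $V$ need not lie in $\VV$, so Theorem~\ref{t3.1} does not apply directly to $\{\PPPP_k^V\}$ and one must argue by hand. The fixed-point identity $\PPPP_1^V(\PPPP_1g)=\PPPP_1g$ handles this; equivalently one can phrase it as a telescoping product along trajectories, or as a change of variables to the tilted Markov kernel $\widetilde P_1(u,\dd v)=(\PPPP_1g)(u)^{-1}g(v)P_1(u,\dd v)$, under which $\PPPP_k^V\mathbf 1$ is visibly bounded uniformly in $k$. Everything else — the Feller and compactness facts ensuring $\PPPP_1g\in C_+(\aA)$, positivity and the semigroup property of $\PPPP_k^V$, and the density/continuity argument passing from $\VV$ to $C(\aA)$ — is routine.
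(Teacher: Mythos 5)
The paper does not prove this lemma; it simply cites Section~6.5.1 of~\cite{ADOZ00}. Your proposal supplies a self-contained proof, and it is correct, but it is genuinely different in that it exploits structure established elsewhere in the paper rather than the general Donsker--Varadhan machinery invoked by the reference.

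For the inequality $J(\sigma)\le I(\sigma)$, the fixed-point identity $\PPPP_1^V(\PPPP_1g)=\PPPP_1g$ with $V=\log g-\log\PPPP_1g$ is exactly the right device: since $\PPPP_k^V$ is monotone and $\mathbf1\le(\inf_\aA\PPPP_1g)^{-1}\PPPP_1g$, you get $\PPPP_k^V\mathbf1$ bounded uniformly in $k$, hence $Q(V)\le0$ via the representation $Q(V)=\lim_k k^{-1}\log(\PPPP_k^V\mathbf1)(u)$, which the paper does establish for every $V\in C(\aA)$ in the paragraph preceding the lemma (see \eqref{E:3.4} and the density argument following \eqref{E:3.6}). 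For the inequality $I(\sigma)\le J(\sigma)$, the choice $g=h_Ve^V$ with the principal eigenfunction $h_V$ supplied by Theorems~\ref{main-tech} and~\ref{t3.1} gives $\log g-\log\PPPP_1g=V-Q(V)$ on the nose (using $\PPPP_1(h_Ve^V)=\PPPP_1^Vh_V=\la_Vh_V$ and $Q(V)=\log\la_V$), so $J(\sigma)\ge\lag V,\sigma\rag-Q(V)$ for all $V\in\VV$, and the passage to all of $C(\aA)$ by density and $1$-Lipschitzness of $Q$ is routine. This is an economical use of the Perron--Frobenius structure the paper builds for Theorem~\ref{main}; the textbook treatment in~\cite{ADOZ00} cannot use it, since in that generality no eigenfunction is available, so the proofs are not the same. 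What your approach buys is a short, fully explicit proof adapted to this setting; what it costs is that it only works because Theorems~\ref{main-tech} and~\ref{t3.1} have already been established. One cosmetic point: you invoke ``Legendre biduality'' to obtain $I(\sigma)=\sup_{V\in C(\aA)}(\lag V,\sigma\rag-Q(V))$, but no biduality is needed---that identity \emph{is} the definition of $I$ in \eqref{E:H4}; the nontrivial duality statement is $Q(V)=\sup_\sigma(\lag V,\sigma\rag-I(\sigma))$, which you do not in fact use.
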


Until the end of the proof, we fix $V\in \VV$. We denote by 
$$
(\la_V, \mu_V, h_V)\in \R_+\times \PP(\aA)\times C_+(\aA)
$$ 
the  triple constructed in Theorem~\ref{t3.1} for the generalised Markov semigroup~$\{\PPPP_k^V\}$.  Recall that the Markov semigroup~$\{\SSS^V_k\}$ is defined by~\eqref{2.22} and denote by~$\{\SSS^{V*}_k\}$ its dual semigroup acting on~$\PP(\aA)$. The following lemma establishes the uniqueness of stationary measure for~$\{\SSS^{V*}_k\}$ and provides a formula for it. 

\begin{lemma}\label{L:une}  $\nu_V=h_V\mu_V$ is the unique stationary measure for~$\{\SSS^{V*}_k\}$.
\end{lemma}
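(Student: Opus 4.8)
The plan is to unwind the definition~\eqref{2.22} of $\SSS_k^V$ and thereby reduce everything to the spectral data $(\la_V,\mu_V,h_V)$ and the convergence~\eqref{c2} furnished by Theorem~\ref{t3.1}. First I would record the elementary facts that $\{\SSS_k^V\}$ is a genuine Markov semigroup on $C(\aA)$: it is positivity preserving; the semigroup identity $\SSS_k^V\SSS_l^V=\SSS_{k+l}^V$ follows from the corresponding identity for $\{\PPPP_k^V\}$ together with the cancellation of the two factors $h_V$; and $\SSS_k^V\mathbf1=\la_V^{-k}h_V^{-1}\PPPP_k^V h_V=\mathbf1$ since $\PPPP_1^V h_V=\la_V h_V$. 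Dualising, for $\sigma\in\PP(\aA)$ and $g\in C(\aA)$ one computes
$$
\langle g,\SSS_k^{V*}\sigma\rangle
=\langle\SSS_k^V g,\sigma\rangle
=\la_V^{-k}\langle\PPPP_k^V(gh_V),h_V^{-1}\sigma\rangle
=\la_V^{-k}\langle gh_V,\PPPP_k^{V*}(h_V^{-1}\sigma)\rangle,
$$
which shows that $\SSS_k^{V*}\sigma=\la_V^{-k}\,h_V\cdot\PPPP_k^{V*}(h_V^{-1}\sigma)$, where $h_V^{-1}\sigma$ and $h_V\cdot\tau$ denote the measures with the indicated densities (well defined because $h_V\in C_+(\aA)$ is bounded and bounded away from $0$ on the compact set $\aA$).

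Granted this, stationarity of $\nu_V=h_V\mu_V$ is immediate: it is a probability measure since $\nu_V(\aA)=\langle h_V,\mu_V\rangle=1$, and, using $h_V^{-1}\nu_V=\mu_V$ together with $\PPPP_1^{V*}\mu_V=\la_V\mu_V$, one gets $\SSS_1^{V*}\nu_V=\la_V^{-1}h_V\cdot\PPPP_1^{V*}\mu_V=h_V\mu_V=\nu_V$.

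For uniqueness, suppose $\sigma\in\PP(\aA)$ satisfies $\SSS_1^{V*}\sigma=\sigma$, hence $\SSS_k^{V*}\sigma=\sigma$ for all $k\ge1$. Set $\tau=h_V^{-1}\sigma\in\MM_+(\aA)$. The formula for $\SSS_k^{V*}$ rewrites $\SSS_k^{V*}\sigma=\sigma$ as $\la_V^{-k}\PPPP_k^{V*}\tau=\tau$ for every $k$. Letting $k\to\infty$ and invoking~\eqref{c2}, the left-hand side converges weakly to $\langle h_V,\tau\rangle\mu_V$, so $\tau=\langle h_V,\tau\rangle\mu_V$; since $\langle h_V,\tau\rangle=\langle h_V,h_V^{-1}\sigma\rangle=\sigma(\aA)=1$, this gives $\tau=\mu_V$, i.e.\ $\sigma=h_V\mu_V=\nu_V$.

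The whole argument is bookkeeping once Theorem~\ref{t3.1} is in hand; the only point calling for a little care is writing down the correct dual action of the conjugated semigroup on measures and checking that conjugation by $h_V$ turns the probability measure $\mu_V$ back into a probability measure. I do not anticipate any serious obstacle here.
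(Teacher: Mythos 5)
Your proof is correct and follows essentially the same route as the paper: conjugate by $h_V$ to reduce stationarity of $\SSS^{V*}_1$ to the eigenvector equation for $\PPPP^{V*}_1$, then appeal to Theorem~\ref{t3.1} for uniqueness of that eigenvector. The only cosmetic difference is that you spell out the passage from $\PPPP_1^{V*}\tau=\la_V\tau$ to $\tau=C\mu_V$ by iterating and invoking the convergence~\eqref{c2}, whereas the paper simply cites the uniqueness assertion of Theorem~\ref{t3.1} at that step.
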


\begin{proof} The relation  $\lag h_V,\mu_V\rag =~1$ implies that $\nu_V \in {\cal P}({\cal A})$.
For any $g\in C(\aA)$ we have
$$
\lag \SSS^V_1 g, \nu_V \rag
= \la_V^{-1}\lag    \PPPP_1^V (gh_V), \mu_V \rag
=  \lag     gh_V , \mu_V \rag
=\lag   g , \nu_V \rag,
$$
and so  $\nu_V$ is a  stationary measure for~$\SSS^{V*}_1$. 
Let~$\nu\in \PP(\aA)$ be another stationary measure for~$\SSS^{V*}_1$. Then the measure~$\mu$ defined by $\mu=h_V^{-1}\nu$ satisfies the relation
$$
\lag      gh_V , \mu \rag= \lag      g , \nu \rag= \lag \SSS_1^V g, \nu \rag=\la_V^{-1} \lag \PPPP_1^V  ( gh_V),  \mu \rag
\quad\mbox{for any $g\in C(\aA)$}.
$$
Since~$h_V $ is everywhere positive, this relation implies $\PPPP_1^{V*}  \mu=\la_V\mu$ and  so  $\mu=C\mu_V$. Since $\langle h_V,\mu\rangle=\langle h_V,\mu_V\rangle=1$, we conclude that $\mu=\mu_V$ and $\nu=\nu_V$. 
\end{proof}

\begin{lemma}\label{L:kn}
The infimum in~\eqref{E:infb} is attained at the function $g=h_Ve^V$ if and only if $\sigma=\nu_V$. 
\end{lemma}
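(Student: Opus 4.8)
The plan is to change variables, writing $g=\psi\,h_Ve^{V}$, which converts the variational problem~\eqref{E:infb} into a statement about the Markov semigroup $\{\SSS^V_k\}$, and then to invoke Lemma~\ref{L:une}.

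First I would fix $\sigma\in\PP(\aA)$ and, for an arbitrary $g\in C_+(\aA)$, set $\psi:=g\,(h_Ve^{V})^{-1}$. Since $h_V$ and $e^{V}$ are continuous and bounded away from zero on the compact set~$\aA$, the correspondence $g\leftrightarrow\psi$ is a bijection of $C_+(\aA)$ onto itself, and $g=h_Ve^{V}$ corresponds to $\psi=\mathbf1$. Using the identity $\PPPP^V_1f=\PPPP_1(e^{V}f)$, the eigenrelation $\PPPP^V_1h_V=\la_Vh_V$, the definition~\eqref{2.22} of $\SSS^V_1$, and $Q(V)=\log\la_V$ (see~\eqref{E:3.4}), a short computation gives $\PPPP_1g=\la_V\,h_V\,\SSS^V_1\psi$, whence
\begin{equation*}
\log\frac{\PPPP_1g}{g}=Q(V)-V+\log\frac{\SSS^V_1\psi}{\psi}\qquad\text{on }\aA.
\end{equation*}
Integrating against~$\sigma$ shows that $g\mapsto\langle\log(\PPPP_1g/g),\sigma\rangle$ and $\psi\mapsto\langle\log(\SSS^V_1\psi/\psi),\sigma\rangle$ differ by the constant $Q(V)-\langle V,\sigma\rangle$, hence are minimised at corresponding points. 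Because $\SSS^V_1$ is a genuine Markov operator (indeed $\SSS^V_1\mathbf1=\mathbf1$, immediately from $\PPPP^V_1h_V=\la_Vh_V$), the value of the second functional at $\psi=\mathbf1$ is~$0$; therefore the infimum in~\eqref{E:infb} is attained at $g=h_Ve^{V}$ \emph{if and only if} $\langle\log(\SSS^V_1\psi/\psi),\sigma\rangle\ge0$ for all $\psi\in C_+(\aA)$.

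The second step is to show that this property is equivalent to $\sigma$ being stationary for $\SSS^{V*}_1$. If $\sigma$ is stationary, I would apply Jensen's inequality to the concave function $\log$ and the Markov operator $\SSS^V_1$ to get $\log\SSS^V_1\psi\ge\SSS^V_1(\log\psi)$ pointwise on~$\aA$; integrating against~$\sigma$ and using $\langle\SSS^V_1(\log\psi),\sigma\rangle=\langle\log\psi,\sigma\rangle$ yields the required inequality. Conversely, testing the inequality with $\psi=\mathbf1+\e f$ for $f\in C(\aA)$ and $|\e|$ small, the function $\e\mapsto\langle\log(\SSS^V_1(\mathbf1+\e f)/(\mathbf1+\e f)),\sigma\rangle$ is smooth near~$0$, vanishes at~$\e=0$ and is nonnegative there, so its derivative at~$0$, namely $\langle\SSS^V_1f-f,\sigma\rangle$, must vanish; since $f$ is arbitrary, $\sigma$ is stationary for $\SSS^{V*}_1$.

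Finally, by Lemma~\ref{L:une} the semigroup $\{\SSS^{V*}_k\}$ admits the unique stationary measure $\nu_V=h_V\mu_V$ (and a measure stationary for $\SSS^{V*}_1$ is stationary for the whole semigroup), so the condition of the second step holds exactly when $\sigma=\nu_V$. Chaining the two reductions gives the lemma. The only routine matters are the bookkeeping in the change-of-variables identity and the differentiation under the integral sign in the perturbation step, both harmless since all quantities are uniformly bounded on the compact set~$\aA$; the real content is the substitution $g=\psi h_Ve^{V}$ together with the uniqueness statement of Lemma~\ref{L:une}, so I do not expect a serious obstacle.
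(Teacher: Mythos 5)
Your proof is correct and takes essentially the same route as the paper: both reduce the variational problem to the Markov operator $\SSS^V_1$, use a first-order variation to obtain stationarity (the necessity direction), apply Jensen's inequality for sufficiency, and invoke Lemma~\ref{L:une}. Making the substitution $g=\psi\,h_Ve^V$ explicit at the outset is a cleaner way of organizing the same computation, and incidentally sidesteps a sign typo that appears in the paper's display~\eqref{E:bel} (which should read $\ge0$, as the surrounding argument shows).
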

 
\begin{proof}
Let us define $\tilde h_V := h_V  e^V $, so that $e^V \PPPP_1 \tilde h_V=\la_V \tilde h_V$. Assume that   $I(\sigma)=-  \bigl\lag \log \bigl(\frac{\PPPP_1 \tilde h_V}{\tilde h_V}\bigr), \sigma\bigr\rag$ for some $\sigma\in \PP(\aA)$. Then, for any $g\in C_+(\aA)$, we have 
$$
\frac{\dd}{\dd \e} \biggl\lag\log\biggl(\frac{\PPPP_1(\tilde h_V+\e g) }{\tilde h_V+\e g}\biggr),\sigma \biggr\rag\bigg|_{\e=0}=0.
$$This implies that
$$
  \biggl\lag \biggl(\frac{ \PPPP_1 g }{\PPPP_1 \tilde h_V }-\frac{  g }{\tilde h_V }\biggr),\sigma \biggr\rag =0,
$$
which is equivalent to the relation $\lag \SSS^V_1  g,\sigma\rag=\lag   g,\sigma\rag$. Since this is true for any $g\in C_+(\aA)$,  Lemma~\ref{L:une} implies  that $\sigma= \nu_V$.

Now let us prove that  $I(\nu_V)=-  \bigl\lag \log \bigl( \frac{\PPPP_1 \tilde h_V}{\tilde h_V}\bigr), \nu_V\bigr\rag$. We need to show that 
$$
\biggl\lag \log \biggl( \frac{\PPPP_1 \tilde h_V}{\tilde h_V}\biggr), \nu_V\biggr\rag
\le \biggl\lag \log \biggl( \frac{\PPPP_1 g}{g}\biggr), \nu_V\biggr\rag
\quad\mbox{for any $g\in C_+(\aA)$}. 
$$
The relation is equivalent to
\begin{align}\label{E:bel}
\biggl\lag \log \biggl( \frac{\tilde h_V \PPPP_1 g}{g\,\PPPP_1 \tilde h_V }\biggr), \nu_V\biggr\rag\le0.
\end{align}
Since $\nu_V$ is a stationary measure  for~$\SSS^{V*}_1$, we have $\lag \SSS^V_1  f-f,\nu_V\rag=0$ for any $f\in C(\aA)$. Combining this with Jensen's inequality, we see that $$\lag \log(\SSS^V_1  e^f)-f,\nu\rag\ge 0.$$ Taking $f=\log g$ in this equality and using the definition of $\SSS^V_1 $, we get  
$$
\biggl\lag \log \biggl( \frac{ \PPPP_1 ( \tilde h_V g)}{g\,\PPPP_1  \tilde h_V }\biggr), \nu_V\biggr\rag\ge0.
$$
Replacing $g$ by $ {g}/{\tilde h_V}$, we obtain~(\ref{E:bel}).
\end{proof}

We can now establish the uniqueness of equilibrium state. Let~$\sigma_V\in\PP(\aA)$ be such that~\eqref{E:3.1**} holds. The definition of~$\tilde h_V$ and the relation $Q(V)=\log\lambda_V$ imply that
\begin{align*}
\lag V,\sigma_V\rag-I(\sigma_V)=Q(V)
=\biggl\lag V+ \log \biggl( \frac{\PPPP_1 \tilde h_V}{\tilde h_V}\biggr),\sigma_V\biggr\rag. 
\end{align*}
It follows that $I(\sigma_V)=-\bigl\lag \log \bigl( \frac{\PPPP_1 \tilde h_V}{\tilde h_V}\bigr)  , \sigma_V\bigr\rag$. By Lemma~\ref{L:kn}, we have $\sigma_V=\nu_V$. This completes the proof of uniqueness of the equilibrium measure for $V\in {\cal V}$ and Theorem \ref{main} follows.

\section{Proof of Theorem~\ref{mainty}} 
\label{s6}
As described in Section \ref{s2.4}, Theorem \ref{mainty} follows from Theorem \ref{mainm} (which in turn is a generalisation 
of Theorem~\ref{main}). To establish Theorem \ref{mainm}, one follows  the general scheme used in the proof of Theorem~\ref{main}, applying it to the Markov chain formed by the segments of trajectories of length~$m$. Namely, let us consider the following family of Markov chains in~$\aA^{(m)}$:
\begin{equation} \label{4.21}
\uuu_k=\Ss(\uuu_{k-1})+\eeta_k,
\end{equation}
where $\uuu_k=(u_k^1, \dots,u_k^{m})$, $\eeta_k=(0,\dots,0,\eta_{k+m-1})$, and $\Ss:H^m\to H^m$ is the mapping given by
$$
\Ss(v_1,\dots,v_m)=\bigl(v_2,\dots,v_m,S(v_m)\bigr), 
\quad (v_1,\dots,v_m)\in\HHH.
$$
It is clear that if~$u_0$ is an $\aA$-valued random variable independent of~$\{\eta_k\}$ and~$\{u_k\}$ is the corresponding trajectory of~\eqref{1.1}, then~$\zeta_k^m$ is the occupation measure for the trajectory of~\eqref{4.21} starting  from the (random) initial point~$(u_0,\dots,u_{m-1})$. Since its law is supported by~$\aA^{(m)}$, the LDP for~$\zeta_k^m$ will be established if we prove the LDP for the Markov family~\eqref{4.21} restricted to the invariant compact set~$\aA^{(m)}$. By Kifer's theorem and the argument described  Section~\ref{s2.3}, the latter result is a consequence of the following two properties (which were described in Section~\ref{s2.3} for~$\{u_k\}$):

\begin{description}
\item[Property 1': The existence of a limit.]
For any function $V\in C(\aA^{(m)})$ the  limit 
\begin{align}\label{4.22}
Q_m(V)=\lim_{k\to+\ty} \frac{1}{k}
\log\E\,\exp\biggl(\,\sum_{n=1}^kV(\uuu_n)\biggr).
\end{align} 
exists and does not depend on the initial point $\uuu=(u^1,\dots,u^m)\in \aA^{(m)}$.
\item[Property 2': Uniqueness of equilibrium state.]
There exists a dense vector space $\VV_m\subset C(\aA^{(m)})$ such that, for any $V\in\VV_m$, there is a unique measure $\sigma_V\in\PP(\aA^{(m)})$ satisfying the relation
$$
Q_m(V)=\sup_{\sigma\in\PP(\aA^{(m)})}
\bigl(\langle V,\sigma\rangle-I_m(\sigma)\bigr),
$$
where $I_m(\sigma)$ denotes the Legendre transform of~$Q_m$.
\end{description}
To establish these assertions, we introduce a generalised Markov semigroup by the relation (cf.~\eqref{2.18})
\begin{equation} \label{4.23}
\PPPP_k^V f(\uuu):= 
\E_{\uuu}f(\uuu_k)\exp \biggl(\,\sum_{n=1}^kV(\uuu_n)\biggr), 
\quad f\in C(\aA^{(m)}),
\end{equation}
where $V\in C(\aA^{(m)})$ is a given function. If we prove that~$\{\PPPP_k^V\}$ satisfies the uniform Feller and uniform irreducibility properties of Theorem~\ref{t3.1} for any~$V$ belonging to a dense subspace~$\VV_m$ that contains constant functions, then the required results will following line by line 
the proof of Theorem \ref{main}. 

\smallskip
To show the uniform irreducibility, note that~$\aA^{(m)}$ is the domain of attainability from zero for system~\eqref{4.21}. Therefore the required 
property follows by repeating the proof of a similar property for~\eqref{1.1}. 

\smallskip
We now turn to the uniform Feller property. Let~$\VV_m$ be the space of functions $V\in C(\aA^{(m)})$ for which there is an integer $N\ge1$ and a function $F\in C(H_N^m)$ such that
$$
V(\uu)=F({\mathsf P}_N\uuu)\quad
\mbox{for $\uuu=(u^1,\ldots,  u^m)\in\aA^{(m)}$},
$$
where ${\mathsf P}_N\uu=({\mathsf P}_Nu^1,\dots,{\mathsf P}_Nu^m)$. Then, for $V,f\in\VV_m$ and $k\ge m$, we can write (cf.~\eqref{4.11} and~\eqref{4.12*})
\begin{multline} \label{4.24}
(\PPPP_k^Vf)(\uuu)=\int_{B_{R,b}^k}
D_k(\bg,\sigma_{1},\dots,\sigma_k)
\exp\biggl(\,\sum_{n=1}^kV(v_{n-m+1},\dots,v_n)\biggr)\\
\times f(v_{k-m+1},\dots,v_k)\,\dd\ell_N(\sigma_1)\dots\dd\ell_N(\sigma_k).
\end{multline}
Here $\bg=\bigl(\tbinom{v_j}{\psi_j},j\in\Z_0\bigr)\in\MU$ is an arbitrary point such that 
$$
u^{l+m}=v_l+\WW_0(\bg_l)\quad\mbox{for $1-m\le l\le0$},
$$
where $\bg_l=\bigl(\tbinom{v_j}{\psi_j},j\in\Z_l\bigr)$, and we use the same notation as in~\eqref{4.12*}. Let us denote by $f_k(\bg)$ the right-hand side of~\eqref{4.24}. As was explained in Section~\ref{s4.3}, the uniform equicontinuity of~$\{\PPPP_k^Vf,k\ge0\}$ follows from the 
uniform equicontinuity of $\{f_k(\bg),k\ge m\}$ (note that these functions act  on~$\MU$). The latter property can be proved by literal repetition of the argument used to establish assertion~(P) of Section~\ref{s4.3}, provided that $V\in\VV_m$ and $f\in \VV_m\cap C_+(\aA^{(m)})$. This completes the proof of Theorems~\ref{mainm} and \ref{mainty}.

\section{Appendix}
\label{s5} 
 In this section, we recall three results on the large deviation principle (LDP). The first of them was established by Kifer~\cite{kifer-1990} and  provides a sufficient condition for the validity of LDP for a family of random probability measures. The second result shows that, when studying the LDP for occupation measures of random processes, one can take the average starting from any non-negative time. The third result due to Dawson and G\"artner~\cite{DG-1987} shows that the process level LDP is a straightforward consequence of the LDP for finite segments of solutions. 

\subsection{Kifer's sufficient condition for LDP}
 \label{s5.1}
Let $\Theta$ be a directed set, let~$X$ be a compact metric space, and let $(\Omega, \FF, \pP)$ be a probability space. We consider a family $\{\zeta_\theta\}=\{\zeta_\theta^\omega\}$ of random probability measures on~$X$ depending on~$\theta\in\Theta$ such that the following  limit exists  for any $V\in C(X)$:
 \begin{align}\label{E:H1}
Q(V)=\lim_{\theta\in \Theta} \frac{1}{r(\theta)}\log \int_{\Omega} \exp\biggl(r(\theta)\int_X V\dd \zeta_\theta^\om\biggr)\dd \pP(\om),
\end{align}   
where $r:\Theta\to\R$ is a given positive function such that $\lim_{\theta\in \Theta} r(\theta)=\ty$. 
Then $Q:C(X)\to \R$ is a convex $1$-Lipschitz functional such that $Q(V)\ge0$ for any $V\in C_+(X)$ and $Q(C)=C$ for any constant $C\in\R$. Recall that the Legendre transform of~$Q$ is defined on the space~$\MM(X)$ by 
\begin{equation} \label{E:H4}
I(\sigma)=\sup_{V\in C(X)}\bigl(\lag V, \sigma\rag-Q(V)\bigr)
\end{equation}
if  $\sigma \in {\cal P}(X)$ and $I(\sigma)=\infty$ otherwise. The function~$I(\sigma)$ is convex and lower semicontinuous in the weak topology, and~$Q$ can be reconstructed by the formula
$$
Q(V)= \sup_{\sigma\in\PP(X)} \bigl(\lag V, \sigma\rag-I(\sigma)\bigr).
$$
Since    $\PP(X)$ endowed with topology of weak convergence  is compact, for any $V\in C(X)$ there is $\sigma_V\in\PP(X)$ such that
\begin{align}\label{E:H3}
Q(V)= \lag V,\sigma_V\rag-I(\sigma_V).
\end{align} 
Any measure $\sigma_V\in\PP(X)$ satisfying~\eqref{E:H3} is called an {\it equilibrium state\/} for~$V$. The following result of Kifer shows that 
if the equilibrium state is unique for a dense vector subspace of $V\in C(X)$, then the LDP holds for~$\zeta_\theta$.

\begin{theorem}\label{T:Kif}
Suppose that limit~\eqref{E:H1} exists for any $V\in C(X)$. Then the LD upper bound
$$
\limsup_{\theta\in \Theta} \frac1{r(\theta)}\log \pP\{\zeta_\theta\in F\}\le -I(F)
$$
 holds with the rate function~$I$ given by~\eqref{E:H4}. Furthermore, if there  exists a dense vector space $\VV\subset C(X)$ such that the equilibrium state $\sigma_V\in \PP(X)$ is unique for any $V\in \VV $, then the LD lower bound also holds:
 $$
 \liminf_{\theta\in \Theta} \frac1{r(\theta)}\log \pP\{\zeta_\theta\in G\}\ge -I(G).
$$
\end{theorem}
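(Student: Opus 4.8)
The plan is to verify the large-deviation upper and lower bounds separately, the former being a G\"artner--Ellis type estimate and the latter, which is the genuinely delicate half, being reduced to a lower bound at a unique equilibrium state followed by an approximation argument, which is where the hypothesis on $\VV$ enters. Throughout I use that $\PP(X)$ is weakly compact (since $X$ is compact metric) and that $C(X)$ is a separable Banach space; the passage from sequences to nets requires no change.

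\textbf{Upper bound.} Fix a closed $F\subset\PP(X)$ and a number $\alpha<I(F)$. For each $\sigma\in F$ pick $V_\sigma\in C(X)$ with $\lag V_\sigma,\sigma\rag-Q(V_\sigma)>\alpha$; by weak continuity of $\nu\mapsto\lag V_\sigma,\nu\rag$ the set $U_\sigma=\{\nu:\lag V_\sigma,\nu\rag>\alpha+Q(V_\sigma)\}$ is an open neighbourhood of $\sigma$, and the exponential Chebyshev inequality gives $\pP\{\zeta_\theta\in U_\sigma\}\le e^{-r(\theta)(\alpha+Q(V_\sigma))}\,\E\exp(r(\theta)\lag V_\sigma,\zeta_\theta\rag)$, whence $\limsup_\theta r(\theta)^{-1}\log\pP\{\zeta_\theta\in U_\sigma\}\le-\alpha$ by the very definition of $Q$. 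Covering the compact set $F$ by finitely many $U_{\sigma_i}$ and using that the exponential rate of a finite sum is the maximum of the rates yields $\limsup_\theta r(\theta)^{-1}\log\pP\{\zeta_\theta\in F\}\le-\alpha$; letting $\alpha\uparrow I(F)$ gives the upper bound. The same covering argument, applied to $\E(e^{r(\theta)\lag W,\zeta_\theta\rag};\zeta_\theta\in F)$ with $W\in C(X)$, gives the Varadhan-type estimate $\limsup_\theta r(\theta)^{-1}\log\E(e^{r(\theta)\lag W,\zeta_\theta\rag};\zeta_\theta\in F)\le\sup_{\sigma\in F}(\lag W,\sigma\rag-I(\sigma))$, which I record for later use.

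\textbf{Lower bound at a unique equilibrium.} Suppose $\sigma_V$ is the \emph{unique} equilibrium state of some $V\in C(X)$ and $G$ is open with $\sigma_V\in G$; shrinking $G$ to $G\cap\{\sigma:\lag V,\sigma\rag<\lag V,\sigma_V\rag+\e\}$ (still a neighbourhood of $\sigma_V$, and only decreasing the probability to be bounded below) I may assume $\sup_{\sigma\in G}\lag V,\sigma\rag\le\lag V,\sigma_V\rag+\e$. Write $\E\exp(r(\theta)\lag V,\zeta_\theta\rag)=\E(e^{r(\theta)\lag V,\zeta_\theta\rag};\zeta_\theta\in G)+\E(e^{r(\theta)\lag V,\zeta_\theta\rag};\zeta_\theta\in G^c)$. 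The left side has exponential rate $Q(V)$ by hypothesis. Since $G^c$ is compact, $\sigma\mapsto\lag V,\sigma\rag-I(\sigma)$ is upper semicontinuous, and its maximum $Q(V)$ over $\PP(X)$ is attained only at $\sigma_V\notin G^c$, we have $\sup_{\sigma\in G^c}(\lag V,\sigma\rag-I(\sigma))<Q(V)$, so the Varadhan-type estimate forces the $G^c$-term to have exponential rate strictly below $Q(V)$; hence the $G$-term has exponential rate exactly $Q(V)$. Bounding that term by $e^{r(\theta)\sup_G\lag V,\cdot\rag}\pP\{\zeta_\theta\in G\}$ and using $Q(V)=\lag V,\sigma_V\rag-I(\sigma_V)$ gives $\liminf_\theta r(\theta)^{-1}\log\pP\{\zeta_\theta\in G\}\ge-I(\sigma_V)-\e$, and $\e\to0$ finishes this step. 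Note this uses neither that $\VV$ is a subspace nor that it is dense.

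\textbf{Approximation, and conclusion.} It remains to show that for every open $G$, every $\sigma_0\in G$ with $I(\sigma_0)<\infty$, and every $\e>0$ there is $V\in\VV$ whose (unique) equilibrium state $\sigma_V$ lies in $G$ and satisfies $I(\sigma_V)\le I(\sigma_0)+\e$; combined with the previous step and a supremum over $\sigma_0\in G$ this yields $\liminf_\theta r(\theta)^{-1}\log\pP\{\zeta_\theta\in G\}\ge-I(G)$, and hence the LDP with the rate function $I$ of \eqref{E:H4}. To build such a $V$, I would use the Legendre identity $I(\sigma_0)=\sup_{V\in C(X)}(\lag V,\sigma_0\rag-Q(V))$ together with density of $\VV$ to obtain a potential for which $\sigma_0$ is a near-maximiser of $\lag V,\cdot\rag-I(\cdot)$, then apply a perturbation principle (Ekeland, or Br\o{}ndsted--Rockafellar for the convex function $I$ on $\MM(X)$) to move to a nearby measure that is a genuine maximiser, i.e.\ a true equilibrium state of a nearby potential, with its $I$-value still controlled by $I(\sigma_0)+\e$; finally one perturbs the potential into $\VV$ to gain uniqueness of the equilibrium. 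I expect this last step to be the main obstacle: the map $V\mapsto\sigma_V$ is not continuous, $I$ is only lower semicontinuous, and the closedness of the graph of the subdifferential of $Q$ must be used carefully so that weak proximity of $\sigma_V$ to $\sigma_0$, the one-sided bound $I(\sigma_V)\le I(\sigma_0)+\e$, and uniqueness of $\sigma_V$ can all be arranged simultaneously. Everything else in the proof is routine once the Varadhan-type upper bound and this approximation lemma are in hand.
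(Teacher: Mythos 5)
The paper states Theorem~\ref{T:Kif} without proof, citing Kifer, so there is no internal argument to compare with; I assess your proposal on its own merits. Your upper bound is correct: exponential Chebyshev plus a finite subcover of the compact set $F$ works on the compact convex space $\PP(X)$, and the Varadhan-type refinement follows by the same covering with $V_\sigma-W$ in place of $V_\sigma$. Your lower bound at a unique equilibrium state is also correct: the split of $\E\exp(r(\theta)\langle V,\zeta_\theta\rangle)$ over $G'$ and $G'^c$, the strict separation of the exponential rate on $G'^c$ from $Q(V)$ via upper semicontinuity and compactness of $G'^c$, and the extraction of $-I(\sigma_V)-\e$ are all sound.

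The genuine gap is the approximation step, which you flag yourself but do not close. You need: for every $\sigma_0$ with $I(\sigma_0)<\infty$, every weak neighbourhood $G$ of $\sigma_0$, and every $\e>0$, a $V\in\VV$ whose unique equilibrium state lies in $G$ with $I(\sigma_V)\le I(\sigma_0)+\e$. Ekeland or Br\o{}ndsted--Rockafellar applied to $Q$ on $C(X)$ would produce $V_1\in C(X)$ and $\sigma_1\in\partial Q(V_1)$ close in total variation to $\sigma_0$ with controlled $I(\sigma_1)$, but this neither places $V_1$ in $\VV$ (so $\sigma_1$ need not be unique) nor survives the subsequent perturbation of $V_1$ into $\VV$: the subdifferential map $V\mapsto\partial Q(V)$ is only upper semicontinuous, and the unique equilibrium state of a nearby $V_2\in\VV$ can jump far from $\sigma_1$. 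The resolution that actually makes this work exploits the cylinder structure of the weak topology rather than abstract perturbation principles: a basic weak neighbourhood of $\sigma_0$ is cut out by finitely many test functions which, since $\VV$ is a \emph{dense subspace}, may be taken in $\VV$, say $f_1,\dots,f_k$. Every potential $V_s=\sum_i s_i f_i$ then lies in $\VV$ and hence has a unique equilibrium state, and the problem collapses to finite-dimensional duality between the (then automatically differentiable) convex function $q(s)=Q(V_s)$ on $\R^k$ and its Legendre transform $q^*$: one approaches the point $(\langle f_i,\sigma_0\rangle)_i$ from the relative interior of the effective domain of $q^*$ along a line segment, where $q^*$ is continuous and subdifferentiable, hence in the range of $\nabla q$. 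Without this finite-dimensional reduction your argument does not close, so as written the proposal is incomplete.
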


\subsection{Exponential equivalence of random probability measures}
\label{s5.3}
Let~$X$ be a Polish space and let~$\{\mu_k\}$ and~$\{\mu_k'\}$ be two sequences of random probability measures on~$X$. Recall that~$\{\mu_k\}$ and~$\{\mu_k'\}$ are said to be {\it exponentially equivalent\/} if 
\begin{equation} \label{5.31}
\lim_{k\to\infty}\IP\bigl\{\|\mu_k-\mu_k'\|_L^*>\delta\bigr\}^{1/k}=0
\quad\mbox{for any $\delta>0$}. 
\end{equation}
It is well known that if two sequences of random probability measures are exponentially equivalent, then an LDP with a good rate function for one of them implies the same LDP for the other; see Section~4.2.2 in~\cite{ADOZ00}. 

\smallskip
Now let $\{u_n\}$ be a random sequence in~$X$. We denote by $\zeta_k^{(m)}$ the occupation measures for~$\{u_n\}$ starting at time~$m\ge0$:
$$
\mu_k^{(m)}=\frac1k\sum_{n=m}^{m+k-1}\delta_{u_n}. 
$$
The following result was used in Sections~\ref{s2.3}.

\begin{lemma} \label{l5.2}
The sequences~$\mu_k^{(m)}$ and~$\mu_k^{(l)}$ are exponentially equivalent for any integers~$m,l\ge0$.
\end{lemma}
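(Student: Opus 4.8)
The plan is to establish the much stronger statement that $\|\mu_k^{(m)}-\mu_k^{(l)}\|_L^*$ is bounded by an explicit deterministic quantity of order $k^{-1}$, which makes the probability in~\eqref{5.31} vanish identically for all sufficiently large~$k$.

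First I would assume, without loss of generality, that $m\le l$ and set $p=l-m\ge0$. The two occupation measures then differ only through finitely many Dirac masses:
$$
\mu_k^{(m)}-\mu_k^{(l)}
=\frac1k\Bigl(\sum_{n=m}^{m+p-1}\delta_{u_n}-\sum_{n=m+k}^{m+p+k-1}\delta_{u_n}\Bigr),
$$
i.e.\ a difference of $2p$ point masses divided by~$k$. Testing against an arbitrary $f\in L_b(X)$ with $\|f\|_L\le1$, and using that this forces $\|f\|_\infty\le1$, one gets
$$
\bigl|\langle f,\mu_k^{(m)}\rangle-\langle f,\mu_k^{(l)}\rangle\bigr|
\le\frac1k\sum_{n=m}^{m+p-1}|f(u_n)|+\frac1k\sum_{n=m+k}^{m+p+k-1}|f(u_n)|
\le\frac{2p}{k}.
$$
Taking the supremum over all such~$f$ yields $\|\mu_k^{(m)}-\mu_k^{(l)}\|_L^*\le 2p/k$ for every $\omega\in\Omega$.

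Consequently, for any $\delta>0$ and any $k>2p/\delta$ the event $\{\|\mu_k^{(m)}-\mu_k^{(l)}\|_L^*>\delta\}$ is empty, hence $\IP\{\|\mu_k^{(m)}-\mu_k^{(l)}\|_L^*>\delta\}^{1/k}=0$ for all sufficiently large~$k$, and in particular its limit as $k\to\infty$ is zero. This is precisely~\eqref{5.31}, so $\{\mu_k^{(m)}\}$ and $\{\mu_k^{(l)}\}$ are exponentially equivalent. There is essentially no obstacle here: the only point worth noting is that the dual-Lipschitz norm is computed against test functions bounded by~$1$, so the contribution of the finitely many ``boundary'' terms is controlled uniformly and deterministically, with no need for any probabilistic input.
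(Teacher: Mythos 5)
Your argument is correct and is essentially the same as the paper's: both bound $\bigl|\langle f,\mu_k^{(m)}\rangle-\langle f,\mu_k^{(l)}\rangle\bigr|\le 2|m-l|/k$ for $\|f\|_L\le1$ (using $\|f\|_\infty\le1$), conclude the deterministic bound $\|\mu_k^{(m)}-\mu_k^{(l)}\|_L^*\le 2|m-l|/k$, and observe that the probability in \eqref{5.31} vanishes identically once $k>2|m-l|/\delta$. The only difference is that you spell out the telescoping cancellation explicitly, which the paper leaves implicit.
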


\begin{proof} 
Let $f\in L_b(X)$ be  such that $\|f\|_L\le1$. Then 
$$
\bigl|\bigl(f,\mu_k^{(m)}\bigr)-\bigl(f,\mu_k^{(l)}\bigr)\bigr|
\le\frac{2|m-l|}{k}. 
$$
It follows that $\bigl\|\mu_k^{(m)}-\mu_k^{(l)}\bigr\|_L^*\le\frac{2|m-l|}{k}$, whence we see that
$$
\IP\bigl\{\|\mu_k^{(m)}-\mu_k^{(l)}\bigr\|_L^*>\delta\bigr\}=0
\quad\mbox{for $k>2\delta^{-1}|m-l|$}. 
$$
Hence, condition~\eqref{5.31} is satisfied for any $\delta>0$, and the sequences in question are exponentially equivalent. 
\end{proof}

\subsection{Dawson--G\"artner theorem}
\label{s5.2}
For a given   Polish space~$X$, we denote by $\XXX=X^{\Z_+}$ the direct product of countably many copies of~$X$, endowed with the Tikhonov topology, and by $p_m:\XXX\to X^m$ the natural projection to the first~$m$ components of~$\XXX$. Let $\{\zzeta_k\}=\{\zzeta_k^\omega\}$ be a sequence of random probability measures on~$\PP(\XXX)$ and let~$\zeta_k^m$ be the image of~$ \zzeta_k$ under the projection~$p_m$. The following theorem is a particular case of a more general result established in~\cite{DG-1987} (see also Theorem~4.6.1 in~\cite{ADOZ00}). 

\begin{theorem} \label{t5.2} 
Suppose that for any integer~$m\ge1$ the sequence~$\{\zeta_k^m\}$ satisfies the LDP with a good rate function~$I_m:\PP(X^m)\to[0,+\infty]$. Then the LDP holds for~$\{\zzeta_k\}$ with the good rate function 
\begin{equation} \label{5.4}
\III(\ssigma)=\sup_{m\ge1}I_m\bigl(\ssigma\circ p_m^{-1}\bigr). 
\end{equation}
\end{theorem}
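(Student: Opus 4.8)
The plan is to recognise that $\PP(\XXX)$, equipped with the topology of weak convergence, is the projective limit of the spaces $\PP(X^m)$ along the continuous maps $\lambda\mapsto\lambda\circ q_m^{-1}$, where $q_m:X^{m+1}\to X^m$ denotes the projection onto the first $m$ coordinates; granting this identification, the assertion is an instance of the abstract Dawson--G\"artner theorem (Theorem~4.6.1 in~\cite{ADOZ00}) applied to $\mathcal Y_m=\PP(X^m)$, and it can also be put together by hand from the two large deviation bounds, which is the route I will outline.

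\emph{The projective-limit structure.} The map $\Phi:\PP(\XXX)\to\prod_{m\ge1}\PP(X^m)$ sending $\ssigma$ to $(\ssigma\circ p_m^{-1})_{m\ge1}$ is injective, since the finite-dimensional marginals determine a Borel probability measure on a countable product of Polish spaces, and continuous, since pushforward under a continuous map is weakly continuous. Its range is exactly the closed set $\LLL$ of \emph{consistent} families $(\lambda_m)$, i.e.\ those with $\lambda_{m+1}\circ q_m^{-1}=\lambda_m$ for every~$m$: surjectivity onto $\LLL$ is the Kolmogorov extension theorem, which applies because~$X$ is Polish. The crucial point is that $\Phi$ is a homeomorphism onto $\LLL$, equivalently that $\ssigma_n\rightharpoonup\ssigma$ in $\PP(\XXX)$ as soon as $\ssigma_n\circ p_m^{-1}\rightharpoonup\ssigma\circ p_m^{-1}$ for every~$m$. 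For the latter, for each~$m$ the set $\{\ssigma_n\circ p_m^{-1}:n\ge1\}\cup\{\ssigma\circ p_m^{-1}\}$ is relatively compact, hence tight by Prokhorov, so given $\e>0$ one picks a compact $K_m\subset X^m$ with $\sup_n(\ssigma_n\circ p_m^{-1})(X^m\setminus K_m)\le 2^{-m}\e$; then $\bigcap_m p_m^{-1}(K_m)$ is a compact subset of $\XXX$ of $\ssigma_n$-measure $\ge 1-\e$ uniformly in~$n$, so $\{\ssigma_n\}$ is tight, and every weak limit point shares all marginals with $\ssigma$, hence equals $\ssigma$. Consequently the weak topology of $\PP(\XXX)$ is the initial topology for the maps $\ssigma\mapsto\ssigma\circ p_m^{-1}$, and the cylinders $\{\ssigma':\ssigma'\circ p_m^{-1}\in U\}$ with $U$ open in $\PP(X^m)$ form a base.

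\emph{Assembling the LDP.} First, since $\zeta_k^m=\zeta_k^{m+1}\circ q_m^{-1}$ with $q_m$ continuous, the contraction principle applied to the LDP for $\zeta_k^{m+1}$ yields the LDP for $\zeta_k^m$ with rate $\nu\mapsto\inf\{I_{m+1}(\lambda):\lambda\circ q_m^{-1}=\nu\}$, which by uniqueness of the rate function equals $I_m$; in particular $I_m(\lambda\circ q_m^{-1})\le I_{m+1}(\lambda)$. Next, $\III$ is lower semicontinuous, being a supremum of the lower semicontinuous maps $\ssigma\mapsto I_m(\ssigma\circ p_m^{-1})$, and its level set $\{\III\le\alpha\}=\bigcap_m\{\ssigma:\ssigma\circ p_m^{-1}\in\{I_m\le\alpha\}\}$ corresponds under $\Phi$ to $\LLL\cap\prod_m\{I_m\le\alpha\}$, a closed subset of the compact product $\prod_m\{I_m\le\alpha\}$, hence compact; so $\III$ is good. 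For the lower bound, given an open $G\subset\PP(\XXX)$ and $\ssigma\in G$ with $\III(\ssigma)<\ty$, choose a cylinder $\{\ssigma':\ssigma'\circ p_m^{-1}\in U\}\subset G$ containing $\ssigma$; then $\IP\{\zzeta_k\in G\}\ge\IP\{\zeta_k^m\in U\}$ and the lower bound for $\zeta_k^m$ gives $\liminf_k\frac1k\log\IP\{\zzeta_k\in G\}\ge-I_m(\ssigma\circ p_m^{-1})\ge-\III(\ssigma)$, whence $\ge-\III(G)$ upon taking the supremum over $\ssigma\in G$. For the upper bound, goodness of each $I_m$ makes $\{\zeta_k^m\}$ exponentially tight in $\PP(X^m)$, and a bookkeeping argument (choosing the compacts $C_m\subset\PP(X^m)$ so that the events $\{\zeta_k^m\notin C_m\}$ are nested in~$m$, as in the proof of Theorem~4.6.1 of~\cite{ADOZ00}) upgrades this to exponential tightness of $\{\zzeta_k\}$ in $\PP(\XXX)$; it then suffices to prove the upper bound on a compact $\mathcal C$, which follows by covering $\mathcal C$ with finitely many cylinders over neighbourhoods of $\ssigma\circ p_{m_\ssigma}^{-1}$ contained in the open set $\{I_{m_\ssigma}>\beta\}$ (for $\beta<\inf_{\mathcal C}\III$, using lower semicontinuity of $I_{m_\ssigma}$) and applying the upper bound for $\zeta_k^{m_\ssigma}$ on $\{I_{m_\ssigma}\ge\beta\}$. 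Finally, formula~\eqref{5.4} is precisely $\III(\ssigma)=\sup_m I_m(\ssigma\circ p_m^{-1})$, the rate function produced above.

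\emph{Main obstacle.} The conceptual heart is the projective-limit identification: proving that weak convergence in $\PP(\XXX)$ is \emph{equivalent} to weak convergence of all finite-dimensional marginals (the nontrivial implication being the Prokhorov tightness argument above) and hence that $\PP(\XXX)=\varprojlim\PP(X^m)$ as topological spaces. Once this is in place the rest is routine, the only genuinely technical point being the exponential-tightness bookkeeping needed to pass from the marginals back to $\zzeta_k$ itself.
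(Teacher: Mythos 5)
Your proposal is correct, and it departs from the paper's proof in the upper bound. Your treatment of the projective-limit structure, the verification that $\III$ is a good rate function via the level-set identity $\{\III\le\alpha\}=\bigcap_m\{\ssigma:\ssigma\circ p_m^{-1}\in\{I_m\le\alpha\}\}$, and the lower bound obtained by fitting an open cylinder inside $G$ all coincide with the paper's Steps~1 and~2 (the paper leaves the homeomorphism $\PP(\XXX)\cong\varprojlim\PP(X^m)$ implicit, while you spell out the Prokhorov tightness argument). For the upper bound the paper takes a closed $F$ with $\alpha<\III(F)$, writes $\varnothing=F\cap\{\III\le\alpha\}=\bigcap_m\bigl(F\cap\{I_m(\ssigma\circ p_m^{-1})\le\alpha\}\bigr)$, and extracts by a finite-intersection argument a single $m$ with $F\cap\{I_m(\ssigma\circ p_m^{-1})\le\alpha\}=\varnothing$, after which the LDP for $\zeta_k^m$ finishes the job. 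This is shorter than your route, but the finite-intersection step needs the decreasing closed sets $F\cap\{I_m(\ssigma\circ p_m^{-1})\le\alpha\}$ to sit inside some compact set; that is automatic when $\PP(\XXX)$ is compact (e.g.\ $X$ compact, which is effectively the situation in the paper's applications, where everything is supported on $\aA^{(m)}$), but for general Polish $X$ these sets are only preimages of compacta under the marginal maps and need not be compact. Your route — exponential tightness of each $\{\zeta_k^m\}$ from goodness of $I_m$, the nested-compact bookkeeping lifting this to exponential tightness of $\{\zzeta_k\}$ in $\PP(\XXX)$, reduction to compact $F$, and a finite cover of $F$ by open cylinders over $\{I_{m_\ssigma}>\beta\}$ — is the argument of Theorem~4.6.1 in Dembo--Zeitouni and handles the general Polish case cleanly, at the cost of the tightness bookkeeping you flag as the technical point. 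In short, your approach buys full generality; the paper's buys brevity in the (essentially compact) setting it actually uses.
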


\begin{proof}
 \medskip
{\it Step~1: Rate function}.
Let us prove that the function~$\III$ defined by~\eqref{5.4} is a good rate function. Indeed, since~$I_m$ are good rate functions, for any $\alpha\in\R$ we have
\begin{align} 
\{\III\le\alpha\}
=\bigcap_{m=1}^\infty\bigl\{\ssigma\in\PP(\XXX):
I_m\bigl(\ssigma\circ p_m^{-1}\bigr)\le\alpha\bigr\}
=\bigcap_{m=1}^\infty\{\ssigma\circ p_m^{-1}\in K_\alpha^m\},
\label{5.5}
\end{align}
where $K_\alpha^m$ are compact subsets in~$\PP(X^m)$. This relation immediately implies that the set $\{\III\le\alpha\}$ is closed and therefore~$\III$ is lower semicontinuous. Furthermore, since a sequence $\{\ssigma_j\}\subset\PP(\XXX)$ converges if and only if so does $\{\ssigma_j\circ p_m^{-1}\}$ for any~$m\ge1$, it follows from~\eqref{5.5} that the level sets of~$\III$ are compact. 

\smallskip
{\it Step~2: Lower bound}.
Let $G\subset\PP(\XXX)$ be an open subset. It suffices to prove that, for any $\ssigma\in G$, we have
\begin{equation*}
\liminf_{k\to\infty}\frac1k\log\IP\{\zzeta_k\in G\}\ge-\III(\ssigma). 
\end{equation*}
Since $G$ is open, for any $\ssigma\in G$, one can find an integer $m\ge1$ and open subset $G_m\subset\PP(X^m)$ containing~$\ssigma\circ p_m^{-1}$ such that $G\supset p_m^{-1}(G_m)$. Since the LDP holds for $\zeta_k^m=\zzeta_k\circ p_m^{-1}$, it follows that 
\begin{align*}
\liminf_{k\to\infty}\frac1k\log\IP\{\zzeta_k\in G\}
&\ge\liminf_{k\to\infty}\frac1k\log\IP\{\zzeta_k\in p_m^{-1}(G_m)\}\\
&=\liminf_{k\to\infty}\frac1k\log\IP\{\zeta_k^m\in G_m\}\ge-I_m(G_m).
\end{align*}
It remains to note that $I_m(G_m)\le I_m(\ssigma\circ p_m^{-1})\le \III(\ssigma)$. 

\smallskip
{\it Step~3: Upper bound}.
Let $F\subset\PP(\XXX)$ be a closed subset. It suffices to prove that, if $\alpha<\III(F)$, then 
\begin{equation} \label{5.6}
\liminf_{k\to\infty}\frac1k\log\IP\{\zzeta_k\in F\}\le-\alpha. 
\end{equation}
Relation~\eqref{5.5} implies that 
$$
\varnothing=F\cap\{\III\le\alpha\}=\bigcap_{m=1}^\infty
F\cap \bigl\{I_m\bigl(\ssigma\circ p_m^{-1}\bigr)\le\alpha\bigr\}.
$$
Since $F\cap\{\III\le\alpha\}$ is a compact set, it follows that one can find an integer $m\ge1$ such that $F\cap \bigl\{I_m\bigl(\ssigma\circ p_m^{-1}\bigr)\le\alpha\bigr\}=\varnothing$. Denoting by~$F_m$ the image of~$F$ under the projection~$p_m$, we conclude that 
$I_m(F_m)>\alpha$. Since $F\subset p_m^{-1}(F_m)$, using the LDP for~$\zeta_k^m$, we derive
\begin{align*}
\liminf_{k\to\infty}\frac1k\log\IP\{\zzeta_k\in F\}
&\le \liminf_{k\to\infty}\frac1k\log\IP\{\zzeta_k\in p_m^{-1}(F_m)\}\\
&=\liminf_{k\to\infty}\frac1k\log\IP\{\zeta_k^m\in F_m\}
\le -I_m(F_m)<-\alpha. 
\end{align*}
This completes the proof of~\eqref{5.6} and of the theorem.
\end{proof}

Theorem~\ref{t5.2} admits a simple generalisation to the case of uniform LDP. Namely, let us assume that we are given a sequence of random probability measures~$\{\zzeta_k(y)\}$ on~$\XXX$ depending on a parameter $y\in Y$, where~$Y$ is an arbitrary set. We shall say that~$\{\zzeta_k(y)\}$ satisfies the {\it uniform LDP\/} with a good rate function $\III:\PP(\XXX)\to[0,+\infty]$ if 
\begin{align} 
-\III(\dot\Gamma)
&\le\liminf_{k\to\infty}\frac1k\log\inf_{y\in Y}\IP\{\zzeta_k(y)\in\Gamma\}\notag\\
&\le\liminf_{k\to\infty}\frac1k\log\sup_{y\in Y}\IP\{\zzeta_k(y)\in\Gamma\}
\le-\III(\overline\Gamma),\label{6.8}
\end{align}
where $\Gamma\subset\PP(\XXX)$ is an arbitrary Borel subset. The proof of the following result literally repeats that of Theorem~\ref{t5.2}, and we omit it. 

\begin{theorem} \label{t6.4} 
Suppose that for any integer~$m\ge1$ the sequence~$\{\zeta_k^m(y),y\in Y\}$ satisfies the uniform LDP with a good rate function~$I_m:\PP(X^m)\to[0,+\infty]$. Then the uniform LDP holds for~$\{\zzeta_k(y),y\in Y\}$ with the good rate function~\eqref{5.4}.
\end{theorem}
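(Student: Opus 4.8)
The plan is to repeat, essentially word for word, the three-step argument used in the proof of Theorem~\ref{t5.2}, carrying the parameter $y\in Y$ along through every estimate and invoking the \emph{uniform} LDP for the projected families $\{\zeta_k^m(y),y\in Y\}$ wherever that proof invoked the ordinary LDP for $\{\zeta_k^m\}$. Step~1 — that the function $\III$ defined by~\eqref{5.4} is a good rate function — requires no change at all: it relies only on the goodness of the $I_m$, on the representation~\eqref{5.5}, and on the fact that convergence in $\PP(\XXX)$ is tested componentwise through the projections $p_m$, none of which sees the parameter $y$.

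For the lower bound, given an open set $G\subset\PP(\XXX)$ and a point $\ssigma\in G$, I would choose, exactly as before, an integer $m\ge1$ and an open set $G_m\subset\PP(X^m)$ with $\ssigma\circ p_m^{-1}\in G_m$ and $p_m^{-1}(G_m)\subset G$; since $\{\zzeta_k(y)\in p_m^{-1}(G_m)\}=\{\zeta_k^m(y)\in G_m\}$ for every $y$ and $k$, monotonicity of $\IP$ gives $\inf_{y\in Y}\IP\{\zzeta_k(y)\in G\}\ge\inf_{y\in Y}\IP\{\zeta_k^m(y)\in G_m\}$, and the uniform LDP for $\zeta_k^m$, together with $I_m(G_m)\le I_m(\ssigma\circ p_m^{-1})\le\III(\ssigma)$, yields $\liminf_{k\to\infty}\frac1k\log\inf_{y}\IP\{\zzeta_k(y)\in G\}\ge-\III(\ssigma)$; taking the supremum over $\ssigma\in G$ gives the first inequality in~\eqref{6.8}. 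For the upper bound, given a closed $F\subset\PP(\XXX)$ and $\alpha<\III(F)$, the compactness argument of Step~3 of the proof of Theorem~\ref{t5.2} — which manipulates only the level sets $K_\alpha^m$ and the definition of $\III$, hence is insensitive to $y$ — produces an $m\ge1$ with $I_m(F_m)>\alpha$, where $F_m=p_m(F)$; since $F\subset p_m^{-1}(F_m)$, taking $\sup_{y\in Y}$ and applying the uniform LDP for $\zeta_k^m$ gives $\liminf_{k\to\infty}\frac1k\log\sup_{y}\IP\{\zzeta_k(y)\in F\}\le-I_m(F_m)<-\alpha$, and letting $\alpha\uparrow\III(F)$ finishes the upper bound. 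The middle inequality in~\eqref{6.8} is trivial since $\inf_{y}\le\sup_{y}$ termwise.

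I do not expect any genuine obstacle. Each operation used in the proof of Theorem~\ref{t5.2} — passing from a subset of $\PP(\XXX)$ to a projected subset of $\PP(X^m)$, using monotonicity of probability under set inclusion, and feeding in the LDP for $\zeta_k^m$ — commutes with $\inf_{y\in Y}$ and $\sup_{y\in Y}$, so the adaptation is purely mechanical. The only point worth flagging explicitly is that the compactness step in the upper bound, which is the least routine part of the original proof, lives entirely at the level of rate functions and is therefore unaffected by the presence of the extra parameter.
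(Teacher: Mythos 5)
Your proof is correct and is exactly what the paper intends: the authors omit the proof of Theorem~\ref{t6.4} with the remark that it ``literally repeats that of Theorem~\ref{t5.2},'' and you have carried out that repetition faithfully. You correctly identify that Step~1 and the compactness argument in Step~3 live entirely at the level of rate functions and are insensitive to $y$, while the only $y$-dependent step --- feeding the projected LDP into the set inclusions $p_m^{-1}(G_m)\subset G$ and $F\subset p_m^{-1}(F_m)$ --- holds uniformly in $y$ and therefore passes through $\inf_{y\in Y}$ and $\sup_{y\in Y}$.
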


\addcontentsline{toc}{section}{Bibliography}
\def\cprime{$'$} \def\cprime{$'$}
  \def\polhk#1{\setbox0=\hbox{#1}{\ooalign{\hidewidth
  \lower1.5ex\hbox{`}\hidewidth\crcr\unhbox0}}}
  \def\polhk#1{\setbox0=\hbox{#1}{\ooalign{\hidewidth
  \lower1.5ex\hbox{`}\hidewidth\crcr\unhbox0}}}
  \def\polhk#1{\setbox0=\hbox{#1}{\ooalign{\hidewidth
  \lower1.5ex\hbox{`}\hidewidth\crcr\unhbox0}}} \def\cprime{$'$}
  \def\polhk#1{\setbox0=\hbox{#1}{\ooalign{\hidewidth
  \lower1.5ex\hbox{`}\hidewidth\crcr\unhbox0}}} \def\cprime{$'$}
  \def\cprime{$'$} \def\cprime{$'$} \def\cprime{$'$}
\providecommand{\bysame}{\leavevmode\hbox to3em{\hrulefill}\thinspace}
\providecommand{\MR}{\relax\ifhmode\unskip\space\fi MR }
\providecommand{\MRhref}[2]{%
  \href{http://www.ams.org/mathscinet-getitem?mr=#1}{#2}
}
\providecommand{\href}[2]{#2}


\begin{thebibliography}{Gou07b}
\bibitem[BD99]{BD99}
T.~Bogensch\"utz and A.~Doebler, \emph{{Large deviations in expanding random
  dynamical systems}}, Discrete Contin. Dynam. Systems \textbf{5} (1999),
  no.~4, 805--812.

\bibitem[BKL02]{BKL-2002}
J.~Bricmont, A.~Kupiainen, and R.~Lefevere, \emph{Exponential mixing of the
  2{D} stochastic {N}avier--{S}tokes dynamics}, Comm. Math. Phys. \textbf{230}
  (2002), no.~1, 87--132.

\bibitem[BV92]{BV1992}
A.~V. Babin and M.~I. Vishik, \emph{Attractors of {E}volution {E}quations},
  North-Holland Publishing, Amsterdam, 1992.

\bibitem[Cha96]{chang-1996}
M.-H. Chang, \emph{{Large deviation for Navier--Stokes equations with small
  stochastic perturbation}}, Appl. Math. Comput. \textbf{76} (1996), no.~1,
  65--93.

\bibitem[CM10]{CM-2010}
I.~Chueshov and A.~Millet, \emph{{Stochastic 2D hydrodynamical type systems:
  Well posedness and large deviations}}, Appl. Math. Optim. \textbf{61} (2010),
  no.~3, 379--420.

\bibitem[CR04]{CR-2004}
S.~Cerrai and M.~R\"ockner, \emph{Large deviations for stochastic
  reaction--diffusion systems with multiplicative noise and non-{L}ipschitz
  reaction term}, Annals Prob. \textbf{32} (2004), no.~1B, 1100--1139.

\bibitem[CR05]{CR-2005}
\bysame, \emph{Large deviations for invariant measures of stochastic
  reaction--diffusion systems with multiplicative noise and non-{L}ipschitz
  reaction term}, Ann. Inst. H. Poincar\'e Probab. Statist. \textbf{41} (2005),
  69--105.

\bibitem[DG87]{DG-1987}
D.~A. Dawson and J.~G{\"a}rtner, \emph{Large deviations from the
  {M}c{K}ean-{V}lasov limit for weakly interacting diffusions}, Stochastics
  \textbf{20} (1987), no.~4, 247--308.

\bibitem[DS89]{DS1989}
J.-D. Deuschel and D.~W. Stroock, \emph{{Large Deviations}}, Academic Press,
  Boston, 1989.

\bibitem[DV75]{DV-1975}
M.~D. Donsker and S.~R.~S. Varadhan, \emph{{Asymptotic evaluation of certain
  Markov process expectations for large time, I-II}}, Comm. Pure Appl. Math.
  \textbf{28} (1975), 1--47, 279--301.

\bibitem[DZ00]{ADOZ00}
A.~Dembo and O.~Zeitouni, \emph{{Large Deviations Techniques and
  Applications}}, Springer--Verlag, Berlin, 2000.

\bibitem[EMS01]{EMS-2001}
W.~E, J.~C. Mattingly, and Ya. Sinai, \emph{Gibbsian dynamics and ergodicity
  for the stochastically forced {N}avier--{S}tokes equation}, Comm. Math. Phys.
  \textbf{224} (2001), no.~1, 83--106.

\bibitem[FM95]{FM-1995}
F.~Flandoli and B.~Maslowski, \emph{Ergodicity of the 2{D} {N}avier--{S}tokes
  equation under random perturbations}, Comm. Math. Phys. \textbf{172} (1995),
  no.~1, 119--141.

\bibitem[Fre88]{freidlin-1988}
M.~I. Freidlin, \emph{{Random perturbations of reaction--diffusion equations:
  The quasi deterministic approximation}}, Trans. Amer. Math. Soc. \textbf{305}
  (1988), 665--697.

\bibitem[FW84]{FW1984}
M.~I. {Freidlin} and A.~D. {Wentzell}, \emph{{Random {P}erturbations of
  {D}ynamical {S}ystems}}, Springer, New York--Berlin, 1984.

\bibitem[GC95]{GC-1995}
G.~Gallavotti and E.~G.~D. Cohen, \emph{Dynamical ensembles in stationary
  states}, J. Statist. Phys. \textbf{80} (1995), no.~5-6, 931--970.

\bibitem[Gou07a]{gourcy-2007b}
M.~Gourcy, \emph{{A large deviation principle for 2D stochastic Navier--Stokes
  equation}}, Stochastic Process. Appl. \textbf{117} (2007), no.~7, 904--927.

\bibitem[Gou07b]{gourcy-2007a}
\bysame, \emph{{Large deviation principle of occupation measure for a
  stochastic Burgers equation}}, Ann. Inst. H. Poincar\'e Probab. Statist.
  \textbf{43} (2007), no.~4, 375--408.

\bibitem[Kif90]{kifer-1990}
Y.~Kifer, \emph{Large deviations in dynamical systems and stochastic
  processes}, Trans. Amer. Math. Soc. \textbf{321} (1990), no.~2, 505--524.

\bibitem[KS00]{KS-cmp2000}
S.~Kuksin and A.~Shirikyan, \emph{Stochastic dissipative {PDE}s and {G}ibbs
  measures}, Comm. Math. Phys. \textbf{213} (2000), no.~2, 291--330.

\bibitem[KS04]{KS-jpa2004}
\bysame, \emph{Randomly forced {CGL} equation: stationary measures and the
  inviscid limit}, J. Phys. A \textbf{37} (2004), no.~12, 3805--3822.

\bibitem[KS12]{KS-book}
\bysame, \emph{Mathematics of {T}wo-{D}imensional {T}urbulence}, Cambridge
  University Press, Cambridge, 2012.

\bibitem[LS06]{LS-2006}
A.~Lasota and T.~Szarek, \emph{Lower bound technique in the theory of a
  stochastic differential equation}, J. Differential Equations \textbf{231}
  (2006), no.~2, 513--533.

\bibitem[LY94]{LY-1994}
A.~Lasota and J.~A. Yorke, \emph{Lower bound technique for {M}arkov operators
  and iterated function systems}, Random Comput. Dynam. \textbf{2} (1994),
  no.~1, 41--77.

\bibitem[Sow92a]{sowers-1992a}
R.~Sowers, \emph{Large deviations for a reaction--diffusion equation with
  non-{G}aussian perturbation}, Ann. Probab. \textbf{20} (1992), 504--537.

\bibitem[Sow92b]{sowers-1992b}
\bysame, \emph{Large deviations for the invariant measure of a
  reaction--diffusion equation with non-{G}aussian perturbations}, Probab.
  Theory Related Fields \textbf{92} (1992), 393--421.

\bibitem[SS06]{SS-2006}
S.~S. Sritharan and P.~Sundar, \emph{{Large deviations for the two-dimensional
  Navier--Stokes equations with multiplicative noise}}, Stochastic Process.
  Appl. \textbf{116} (2006), 1636--1659.

\bibitem[Sza97]{szarek-1997}
T.~Szarek, \emph{Markov operators acting on {P}olish spaces}, Ann. Polon. Math.
  \textbf{67} (1997), no.~3, 247--257.

\bibitem[Wu00]{wu-2000}
L.~Wu, \emph{{Uniformly integrable operators and large deviations for Markov
  processes}}, J. Funct. Anal. \textbf{172} (2000), 301--376.
\end{thebibliography}
\end{document}